\numberwithin{equation}{section}
\newtheorem{theorem}{Theorem}[section]
\newtheorem{proposition}[theorem]{Proposition}
\newtheorem{definition}[theorem]{Definition}
\newcommand{\M}{\mathcal{M}}
\newcommand{\R}{\mathbb{R}}
\newcommand{\CS}{\mathcal{S}}
\newcommand{\bx}{\mathbf{x}}
\newcommand{\etal}{ et al. }
\newcommand{\argmin}{\mathop{\rm argmin}}
\newcommand{\br}{\mathbb{R}}
\newcommand{\Proj}{{\rm Proj}}
\newcommand{\st}{\mathrm{s.t. }}
\newcommand{\be}{\begin{equation}}
\newcommand{\ee}{\end{equation}}
\newcommand{\ba}{\begin{array}}
\newcommand{\ea}{\end{array}}
\newcommand{\bad}{\begin{aligned}}
\newcommand{\ead}{\end{aligned}}
\newcommand{\normone}[1]{\| #1 \|_1}
\newcommand{\normtwo}[1]{\| #1 \|}
\newcommand{\inp}[2]{\langle #1, #2 \rangle}
\newcommand{\setword}[2]{\phantomsection #1\def\@currentlabel{\unexpanded{#1}}\label{#2}}
\newcommand{\mprox}{\mathrm{mprox}}
\newcommand{\prox}{\mathrm{prox}}
\newcommand{\calC}{\mathcal{C}}
\newcommand{\dist}{\operatorname{dist}}
\begin{document}
\title{Manifold Proximal Point Algorithms for Dual Principal Component Pursuit and Orthogonal Dictionary Learning\thanks{A short version of this paper appeared in the 53rd Annual Asilomar Conference on Signals, Systems, and Computers, Nov. 3--6, 2019.}}
\author{Shixiang Chen\thanks{Department of Industrial \& Systems Engineering, Texas A\&M University, College Station, TX, USA. Email: sxchen@tamu.edu}
\and Zengde Deng\thanks{Cainiao Network, Hangzhou, China. Email: dengzengde@gmail.com}
\and Shiqian Ma\thanks{Department of Mathematics, University of California, Davis, CA, USA. Research supported in part by NSF grants DMS-1953210 and CCF-2007797, and UC Davis CeDAR Innovative Data Science Seed Funding Program. Email: sqma@ucdavis.edu}
\and Anthony Man-Cho So\thanks{Department of Systems Engineering and Engineering Management, The Chinese University of Hong Kong, Hong Kong, China. Research supported in part by the Hong Kong RGC GRF Project CUHK 14203218 and in part by the CUHK Research Sustainability of Major RGC Funding Schemes Project 3133236. Email: manchoso@se.cuhk.edu.hk}
}
\date{\today}
\maketitle

\begin{abstract}
We consider the problem of {minimizing} the $\ell_1$ norm of a linear map over the sphere, which arises in various machine learning applications such as orthogonal dictionary learning (ODL) and robust subspace recovery (RSR). The problem is numerically challenging due to its nonsmooth objective and nonconvex constraint, and its algorithmic aspects have not been well explored. In this paper, we show how the manifold structure of the sphere can be exploited to design fast algorithms with provable guarantees for tackling this problem. Specifically, our contribution is fourfold. First, we present a manifold proximal point algorithm (ManPPA) for the problem and show that it converges at a global sublinear rate. Furthermore, we show that ManPPA can achieve a local quadratic convergence rate when applied to sharp instances of the problem. Second, we develop a semismooth Newton-based inexact augmented Lagrangian method for computing the search direction in each iteration of ManPPA and show that it has an asymptotic superlinear convergence rate. Third, we propose a stochastic variant of ManPPA called StManPPA, which is well suited for large-scale computation, and establish its sublinear convergence rate. Both ManPPA and StManPPA have provably faster convergence rates than existing subgradient-type methods. Fourth, using ManPPA as a building block, we propose a new heuristic method for solving a matrix analog of the problem, in which the sphere is replaced by the Stiefel manifold. The results from our extensive numerical experiments on the ODL and RSR problems demonstrate the efficiency and efficacy of our proposed methods.
\end{abstract}

\section{Introduction}
The problem of finding a subspace that captures the features of a given dataset and possesses certain properties is at the heart of many machine learning applications. One commonly encountered formulation of the problem, which is motivated largely by sparsity or robustness considerations, is given by
\be\label{DPCP}
\min_{\bm{x}\in\br^n} \ f(\bm{x}):=\|\bm{Y}^\top \bm{x}\|_1 \quad \st \quad \|\bm{x}\|_2 = 1,
\ee
where $\bm{Y}\in\br^{n\times p}$ is a given matrix and $\|\cdot\|_r$ denotes the $\ell_r$ norm of a vector. To better understand how problem~\eqref{DPCP} arises in applications, let us consider two representative examples.
\begin{itemize}
\item {\bf Orthogonal Dictionary Learning (ODL).} The goal of ODL is to find an orthonormal basis that can compactly represent a given set of $p$ ($p\gg n$) data points $\bm{y}_1,\ldots,\bm{y}_p \in \br^n$. Such a problem arises in many signal and image processing applications; see, e.g.,~\cite{bruckstein2009,rubinstein2010} and the references therein. By letting $\bm{Y} = \left[ \bm{y}_1, \ldots, \bm{y}_p \right] \in \br^{n\times p}$, the problem can be understood as finding an orthogonal matrix $\bm{X}\in\br^{n\times n}$ and a sparse matrix $\bm{A}\in\br^{n\times p}$ such that $\bm{Y} \approx \bm{X}\bm{A}$. Noting that this means $\bm{X}^\top\bm{Y} \approx \bm{A}$ should be sparse,
one approach is to find a collection of sparse vectors from the row space of $\bm{Y}$ and apply some post-processing procedure to the collection to form the orthogonal matrix $\bm{X}$. This has been pursued in various works; see, e.g.,~\cite{Spielman-Wang-Wright-2012,Qu-Sun-Wright-sparse-vector-2016,Sun-CDR-part1-2017,bai2018subgradient}. In particular, the work~\cite{bai2018subgradient} considers the formulation~\eqref{DPCP} and shows that under a standard generative model of the data, one can recover $\bm{X}$ from certain local minimizers of problem~\eqref{DPCP}.

\item {\bf Robust Subspace Recovery (RSR).} RSR is a fundamental problem in machine learning and data mining~\cite{Lerman-PIEEE-survey}. It is concerned with fitting a linear subspace to a dataset corrupted by outliers. Specifically, given a dataset $\bm{Y} = [ \bm{X}, \bm{O} ] \bm{\Gamma} \in \br^{n\times (p_1+p_2)}$, where the columns of $\bm{X} \in \br^{n\times p_1}$ are the inlier points spanning a $d$-dimensional subspace $\CS$ of $\br^n$ ($d<p_1$), the columns of $\bm{O} \in \br^{n\times p_2}$ are outlier points without a linear structure, and $\bm{\Gamma} \in \br^{(p_1+p_2)\times(p_1+p_2)}$ is an unknown permutation, the goal is to recover the inlier subspace $\CS$, or equivalently, to cluster the points into inliers and outliers. One recently proposed approach for solving this problem is the so-called dual principal component pursuit (DPCP)~\cite{Tsakiris-Vidal-2018,zhu2018dualpcp}. A key task in DPCP is to find a hyperplane that contains all the inliers. Such a task can be tackled by solving problem~\eqref{DPCP}. In fact, it has been shown in~\cite{Tsakiris-Vidal-2018,zhu2018dualpcp} that under certain conditions on the inliers and outliers, any global minimizer of problem~\eqref{DPCP} yields a vector that is orthogonal to the inlier subspace $\CS$.
\end{itemize}

Despite its attractive theoretical properties in various applications, problem~\eqref{DPCP} is numerically challenging to solve due to its nonsmooth objective and nonconvex constraint. Nevertheless, the manifold structure of the constraint set $\M:=\{\bm{x}\in\br^n \mid \|\bm{x}\|_2 = 1\}$ suggests that problem~\eqref{DPCP} could be amenable to manifold optimization techniques~\cite{Absil2009}. One approach is to apply smoothing to the nonsmooth objective in~\eqref{DPCP} and use existing algorithms for Riemannian smooth optimization to solve the resulting problem. For instance, when tackling the ODL problem, Sun\etal \cite{Sun-CDR-part1-2017,Sun-CDR-part2-2017} and Gilboa\etal \cite{Gilboa-2018} proposed to replace the absolute value function $t\mapsto|t|$ by the smooth surrogate $t\mapsto h_\mu(t)=\mu\log(\cosh(t/\mu))$ with $\mu>0$ being a smoothing parameter, while Qu\etal\cite{qu2020geometric} proposed to replace the $\ell_1$ norm with the $\ell_4$ norm. They then solve the resulting smoothed problems by either the Riemannian trust-region method \cite{Sun-CDR-part1-2017,Sun-CDR-part2-2017} or the Riemannian gradient descent method~\cite{Gilboa-2018,qu2020geometric}. Although it can be shown that these methods will yield the desired orthonormal basis under a standard generative model of the data, the smoothing approach can introduce significant analytic and computational difficulties~\cite{bai2018subgradient}. Another approach, which avoids smoothing the objective, is to solve \eqref{DPCP} directly using Riemannian nonsmooth optimization techniques. For instance, in the recent work~\cite{bai2018subgradient}, Bai\etal proposed to solve~\eqref{DPCP} using the Riemannian subgradient method (RSGM), which generates the iterates via
\be\label{Bai-RSM}
\bm{x}^{k+1} = \frac{\bm{x}^{k}-\eta_k \bm{v}^k}{\|\bm{x}^{k}-\eta_k \bm{v}^k\|_2}, \quad  \bm{v}^k\in\partial_R f(\bm{x}^k).
\ee
Here, $\eta_k>0$ is the step size; $\partial_R f(\cdot)$ denotes the Riemannian subdifferential of $f$ and is given by
\[ \partial_R f(\bm{x}) = (\bm{I}_n - \bm{x}\bm{x}^\top)\partial f(\bm{x}), \quad\forall\,\bm{x}\in\M, \]
where $\bm{I}_n$ is the $n\times n$ identity matrix and $\partial f(\cdot)$ is the usual subdifferential of the convex function $f$~\cite[Section 5]{Yang-manifold-optimality-2014}. Bai\etal\cite{bai2018subgradient} showed that for the ODL problem, RSGM with a suitable initialization will converge at a sublinear rate to a basis vector with high probability under a standard generative model of the data. Moreover, by running RSGM $O(n\log n)$ times, each time with an independent random initialization, one can recover the entire orthonormal basis with high probability. Around the same time, Zhu\etal\cite{zhu2018dualpcp} proposed a projected subgradient method (PSGM) for solving~\eqref{DPCP}. The method generates the iterates via
\be\label{PSGM}
\bm{x}^{k+1} = \frac{\bm{x}^{k}-\eta_k \bm{v}^k}{\|\bm{x}^{k}-\eta_k \bm{v}^k\|_2}, \quad \bm{v}^k\in \partial f(\bm{x}^k).
\ee
The updates \eqref{Bai-RSM} and \eqref{PSGM} differ in the choice of the direction $\bm{v}^k$---the former uses a \emph{Riemannian} subgradient of $f$ at $\bm{x}^k$, while the latter uses a usual \emph{Euclidean} subgradient. For the DPCP formulation of the RSR problem, Zhu\etal \cite{zhu2018dualpcp} showed that under certain assumptions on the data, PSGM with suitable initialization and piecewise geometrically diminishing step sizes will converge at a linear rate to a vector that is orthogonal to the inlier subspace $\CS$. The step sizes take the form $\eta_k = \eta^{\lfloor (k-K_0)/K \rfloor + 1}$, where $\eta\in(0,1)$ and $K_0,K\ge1$ satisfy certain conditions. In practice, however, the parameters $\eta,K_0,K$ are difficult to determine. Therefore, Zhu\etal \cite{zhu2018dualpcp} also proposed a PSGM with modified backtracking line search (PSGM-MBLS), which works well in practice but has no convergence guarantee. 

\subsection{Motivations for this Work}
Although the results in~\cite{bai2018subgradient,zhu2018dualpcp} demonstrate, both theoretically and computationally, the efficacy of RSGM and PSGM for solving instances of~\eqref{DPCP} that arise from the ODL and RSR problems, respectively, two fundamental questions remain. First, while PSGM can be shown to achieve a \emph{linear} convergence rate on the DPCP formulation of the RSR problem~\cite{zhu2018dualpcp}, only a \emph{sublinear} convergence rate has been established for RSGM on the ODL problem~\cite{bai2018subgradient}. Given the similarity of the updates~\eqref{Bai-RSM} and~\eqref{PSGM}, it is natural to ask whether the slower convergence rate of RSGM is an artifact of the analysis or due to the inherent structure of the ODL problem. Second, the convergence analyses in~\cite{bai2018subgradient,zhu2018dualpcp} focus only on the ODL and RSR problems. In particular, they do not shed light on the performance of RSGM or PSGM when tackling general instances of problem~\eqref{DPCP}. It would be of interest to fill this gap by identifying or developing practically fast methods that have more general convergence guarantees, especially since different applications may give rise to instances of problem~\eqref{DPCP} with different structures. In a recent attempt to address these questions, Li\etal\cite{li2019nonsmooth} showed, among other things, that RSGM will converge at the sublinear rate of $\mathcal{O}(k^{-1/4})$ (here, $k$ is the iteration counter) when applied to a general instance of problem~\eqref{DPCP} and at a linear rate when applied to a so-called \emph{sharp} instance of problem~\eqref{DPCP}. Informally, an optimization problem is said to possess the sharpness property if the objective function grows linearly with the distance to a set of local minima~\cite{burke1993weak}. Such a property plays a crucial role in establishing fast convergence guarantees for a host of iterative methods; see, e.g.,~\cite{burke1993weak,li2019nonsmooth,li2020nonconvex} and also~\cite{LYS17,ZS17,liu2019quadratic} for related results. Since the ODL problem and the DPCP formulation of the RSR problem are known to possess the sharpness property under certain assumptions on the data~\cite{bai2018subgradient,zhu2018dualpcp}, the results in~\cite{li2019nonsmooth} imply that RSGM will converge linearly on these problems.

\subsection{Our Contributions} \label{subsec:contrib}
In this paper, we depart from the subgradient-type approaches (such as RSGM~\eqref{Bai-RSM} and PSGM~\eqref{PSGM}) and present another method called the manifold proximal point algorithm (ManPPA) to tackle problem~\eqref{DPCP}. At each iterate $\bm{x}^k$, ManPPA computes a search directon by minimizing the sum of $f$ and a proximal term defined in terms of the \emph{Euclidean} distance over the tangent space to $\M$ at $\bm{x}^k$. This should be contrasted with other existing PPAs on manifolds (see, e.g.,~\cite{ferreira2002proximal,bento2017iteration}), in which the proximal term is defined in terms of the \emph{Riemannian} distance. Such a difference is important. Indeed, although the search direction defined in ManPPA does not admit a closed-form formula, it can be computed in a highly efficient manner by exploiting the structure of problem~\eqref{DPCP}; see Section~\ref{subsec:iALM}. However, the search direction defined in the existing PPAs on manifolds can be as difficult to compute as a solution to the original problem. Consequently, the applicability of those methods is rather limited.

We now summarize our contributions as follows:
\begin{enumerate}
\item We show that ManPPA has a global sublinear convergence rate of $\mathcal{O}(k^{-1/2})$ when applied to a general instance of problem~\eqref{DPCP}. Moreover, we show that if the instance has the sharpness property, then the local convergence rate of ManPPA is at least quadratic. Although the sublinear rate result follows from the results in~\cite{chen2018proximal}, the quadratic rate result is new. Moreover, both rates are superior to those of RSGM established in~\cite{li2019nonsmooth}. Key to the proof of the quadratic rate result is a new \emph{Riemannian subgradient inequality} (see Appendix~\ref{app:conv-anal-local}, Proposition \ref{weakly-inequality}), which extends the classic \emph{subgradient inequality} in the Euclidean space to the sphere $\M$. Such an inequality allows us to analyze ManPPA in a similar way as its Euclidean counterpart. It can also be of independent interest.

\item To compute the search direction in each iteration of ManPPA, we develop a semismooth Newton (SSN)-based inexact augmented Lagrangian method (ALM). Numerically, the proposed method can accurately compute the search direction in a highly efficient manner, which is crucial to the fast convergence of ManPPA. Theoretically, we show, for the first time, that the proposed SSN-based inexact ALM has an asymptotic superlinear convergence rate when finding the search direction.

\item We propose a stochastic version of ManPPA called StManPPA to tackle problem~\eqref{DPCP}. StManPPA is well suited for the setting where the number of the data points $p$ is extremely large, as each iteration involves only a simple closed-form update. We also analyze the convergence behavior of StManPPA. In particular, we show that it converges at the sublinear rate of $\mathcal{O}(k^{-1/4})$ when applied to a general instance of problem~\eqref{DPCP}, which matches the convergence rate of RSGM established in~\cite{li2019nonsmooth}. {Again, the aforementioned Riemannian subgradient inequality plays an important role in establishing this result, as it connects the analysis of StManPPA to those of various Euclidean stochastic methods.}

\item Using ManPPA as a building block, we develop a new method for solving the following matrix analog of problem~\eqref{DPCP}:
\be\label{DPCP-matrix}
    \min_{\bm{X} \in \R^{n\times q}} \normone{\bm{Y}^\top\bm{X}}\quad \st\quad \bm{X}^\top \bm{X} = \bm{I}_q.
\ee
Our interest in problem~\eqref{DPCP-matrix} stems from the observation that it provides alternative formulations of the ODL and RSR problems. Indeed, for the ODL problem, one can recover the entire orthonormal basis all at once by solving problem~\eqref{DPCP-matrix} with $q=n$. For the RSR problem, if one knows the dimension $d$ of the inlier subspace $\CS$, then one can recover it by solving problem~\eqref{DPCP-matrix} with $q=n-d$. We show that a good feasible solution to problem~\eqref{DPCP-matrix} can be found in a column-by-column manner by suitably modifying ManPPA. Although the proposed method is only a heuristic, our extensive numerical experiments show that it yields solutions of comparable quality to but is significantly faster than existing methods on the ODL and RSR problems.
\end{enumerate}

\subsection{Organization and Notation}
The rest of the paper is organized as follows. In Section \ref{sec:ManPPA}, we present ManPPA for solving problem \eqref{DPCP} and describe a highly efficient method for solving the subproblem that arises in each iteration of ManPPA. We also analyze the convergence behavior of ManPPA. In Section \ref{sec:Sto-ManPPA}, we propose StManPPA, a stochastic version of ManPPA that is well suited for large-scale computation, and analyze its convergence behavior. In Section \ref{sec:sequent} we discuss an extension of ManPPA for solving the matrix analog~\eqref{DPCP-matrix} of problem~\eqref{DPCP}. In Section \ref{sec:numerical}, we apply ManPPA to solve the ODL problem and the DPCP formulation of the RSR problem and compare its performance with some existing methods. We draw our conclusions in Section \ref{sec:conclusion}. 

Besides the notation introduced earlier, we use $L$ to denote the Lipschitz constant of $f$; i.e., $|f(\bm{x})-f(\bm{y})| \le L \|\bm{x}-\bm{y}\|_2$ for all $\bm{x},\bm{y}\in\R^n$ (note that {$L \le \sqrt{n} \max_{\bm{u}\in\R^n} \|\bm{Y}^\top\bm{u}\|_1  / \|\bm{u}\|_1$}). Given a closed set $\calC\subseteq\R^n$, we use $\Proj_{\calC}(\bm{x})$ to denote the projection of $\bm{x}$ onto $\calC$ and $\dist(\bm{x},\calC):=\inf_{\bm{y}\in\calC} \|\bm{y}-\bm{x}\|_2$ to denote the distance between $\bm{x}$ and $\calC$. Given a proper lower semicontinuous function $h:\R^n\rightarrow\R\cup\{+\infty\}$, its proximal mapping is given by $\prox_h(\bm{x}) = \argmin_{\bm{w}\in\R^n} h(\bm{w}) + \frac{1}{2}\|\bm{w}-\bm{x}\|_2^2$. Given two vectors $\bm{x},\bm{y}\in\R^n$, we use $\inp{\bm{x}}{\bm{y}}$ or $\bm{x}^\top\bm{y}$ to denote their usual inner product. Other notation is standard.

\section{A Manifold Proximal Point Algorithm}\label{sec:ManPPA}
Since problem~\eqref{DPCP} is nonconvex, our goal is to compute a \emph{stationary point} of~\eqref{DPCP}, which is a point $\bar{\bm x}\in\M$ that satisfies the first-order optimality condition
\[ \bm{0} \in \partial_Rf(\bar{\bm x}) = (\bm{I}_n-\bar{\bm x}\bar{\bm x}^\top)\partial f(\bar{\bm x}) \]
(see~\cite{Yang-manifold-optimality-2014}). In the recent work~\cite{chen2018proximal}, Chen\etal considered the more general problem of minimizing the sum of a smooth function and a nonsmooth convex function over the Stiefel manifold and developed a manifold proximal gradient method (ManPG) for finding a stationary point of it. When specialized to solve problem~\eqref{DPCP}, the method generates the iterates via
\begin{equation}\label{ManPPA-dpcp-retraction}
\bm{x}^{k+1} = \Proj_{\M}(\bm{x}^k + \alpha_k\bm{d}^k) = \frac{\bm{x}^k +\alpha_k\bm{d}^k}{\|\bm{x}^k +\alpha_k\bm{d}^k\|_2},
\end{equation}
where the search direction $\bm{d}^k$ is given by
\begin{equation}\label{ManPPA-dpcp-sub}
\begin{array}{rccl}
\bm{d}^k &=& \displaystyle\argmin_{\bm{d} \in \R^n} & \displaystyle\|\bm{Y}^\top(\bm{x}^k+\bm{d})\|_1 + \frac{1}{2t}\normtwo{\bm{d}}_2^2 \\
& & \st  & \bm{d}^\top \bm{x}^k = 0
\end{array}
\end{equation}
and $\alpha_k>0$, $t>0$ are the step sizes. As the reader may readily recognize, without the constraint $\bm{d}^\top\bm{x}^k=0$, the subproblem~\eqref{ManPPA-dpcp-sub} is simply computing the proximal mapping of $f$ at $\bm{x}^k$ and coincides with the update of the classic proximal point algorithm (PPA)~\cite{Rockafellar-76}. The constraint $\bm{d}^\top\bm{x}^k=0$ in~\eqref{ManPPA-dpcp-sub}, which states that the search direction $\bm{d}$ should lie on the tangent space to $\M$ at $\bm{x}^k$, is introduced to account for the manifold constraint in problem~\eqref{DPCP} and ensures that the next iterate $\bm{x}^{k+1}$ achieves sufficient decrease in objective value. Motivated by the above discussion, we call the method obtained by specializing ManPG to the setting of problem~\eqref{DPCP} \emph{ManPPA} and present its details in Algorithm \ref{alg:manppa}.

\begin{algorithm}[h]
	\caption{ManPPA for Solving Problem \eqref{DPCP}}\label{alg:manppa}
	\begin{algorithmic}[1]
		\STATE{Input: $\bx^0 \in \M$, $\beta\in(0,1)$, $t>0$.}
		\FOR{$k=0,1,2,\ldots$}
		\STATE{Solve the subproblem \eqref{ManPPA-dpcp-sub} to obtain $\bm{d}^k$.} \label{line:subpb}
		\STATE{Let $j_k$ be the smallest nonnegative integer such that  
		\[ f(\Proj_{\M}(\bm{x}^k + \beta^{j_k} \bm{d}^k)) \leq f(\bm{x}^k) -\frac{\beta^{j_k}}{2t} \normtwo{\bm{d}^k}_2^2. \]} \label{line:ls}
         \vspace{-0.7\baselineskip}
		\STATE{Set $\bm{x}^{k+1}$ according to~\eqref{ManPPA-dpcp-retraction} with $\alpha_k = \beta^{j_k}$.}
		\ENDFOR
	\end{algorithmic}
\end{algorithm}

Naturally, ManPPA inherits the properties of ManPG established in~\cite{chen2018proximal}. However, due to the structure of problem~\eqref{DPCP}, many of the developments in~\cite{chen2018proximal} have to be refined when designing ManPPA. {In particular, the SSN method used by ManPG for finding the search direction in each iteration requires the computation of the proximal mapping of the nonsmooth part of the objective function. However, due to the presence of the matrix $\bm{Y}$, the objective function $f$ of problem~\eqref{DPCP} does not have an easily computable proximal mapping. As such, the SSN method proposed in~\cite{chen2018proximal} cannot efficiently solve the subproblem~\eqref{ManPPA-dpcp-sub}. To circumvent this difficulty, we propose to use an inexact ALM, which can efficiently compute an accurate solution to \eqref{ManPPA-dpcp-sub}; see Section~\ref{subsec:iALM}.}

Now, let us state the following result, which shows that the line search step in line~\ref{line:ls} of Algorithm~\ref{alg:manppa} is well defined. It simplifies~\cite[Lemma 5.2]{chen2018proximal} and yields sharper constants. The proof can be found in Appendix~\ref{app:suffdec}.
\begin{proposition}\label{prop:suff_decrease}
Let $\{(\bm{x}^k,\bm{d}^k)\}_k$ be the sequence generated by Algorithm \ref{alg:manppa}. Define $\bar{\alpha} = \min\{1,1/(tL)\}$. For any $\alpha \in (0,\bar{\alpha}]$, we have
\be\label{ineq:suff_decrease}
	f(\Proj_{\M}(\bm{x}^k + \alpha\bm{d}^k)) \leq f(\bm{x}^k) - \frac{\alpha}{2t}\normtwo{\bm{d}^k}_2^2.
\ee
As a result, we have $\alpha_k=\beta^{j_k} > \beta\bar{\alpha}$ for any $k\geq 0$ in Algorithm \ref{alg:manppa}, which implies that the line search step terminates after at most $\lceil\log_{\beta}\bar{\alpha}\rceil+1$ iterations.
In particular, if $t\leq 1/L$, then we have $\bar{\alpha} = 1$, which implies that we can take $j_k=0$ in line~\ref{line:ls} of Algorithm \ref{alg:manppa}; i.e., no line search is needed.
\end{proposition}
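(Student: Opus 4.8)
The plan is to control $f$ at the projected point $\Proj_{\M}(\bm{x}^k+\alpha\bm{d}^k)$ by splitting the change in objective value into two pieces: the decrease achieved along the \emph{unprojected} chord $\bm{x}^k+\alpha\bm{d}^k$, and the error incurred by retracting this point back onto the sphere. The desired inequality then emerges by choosing $\alpha$ small enough that the first piece dominates the second.

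First I would extract a decrease estimate from the subproblem~\eqref{ManPPA-dpcp-sub}. Since $\bm{d}\mapsto f(\bm{x}^k+\bm{d})+\frac{1}{2t}\|\bm{d}\|_2^2$ is $\tfrac1t$-strongly convex and $\bm{d}^k$ minimizes it over the tangent space (a linear subspace containing $\bm{0}$), comparing the minimizer against the feasible point $\bm{0}$ yields $f(\bm{x}^k+\bm{d}^k)\le f(\bm{x}^k)-\tfrac1t\|\bm{d}^k\|_2^2$. The factor $\tfrac1t$ here---rather than the weaker $\tfrac{1}{2t}$ that one gets from merely using $g(\bm{d}^k)\le g(\bm{0})$---is the crux of the whole argument. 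Convexity of $f$ then interpolates this to intermediate step sizes: for $\alpha\in[0,1]$, writing $\bm{x}^k+\alpha\bm{d}^k=(1-\alpha)\bm{x}^k+\alpha(\bm{x}^k+\bm{d}^k)$ gives $f(\bm{x}^k+\alpha\bm{d}^k)\le f(\bm{x}^k)-\tfrac{\alpha}{t}\|\bm{d}^k\|_2^2$.

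Second I would bound the retraction error. Because $\bm{d}^k$ is orthogonal to $\bm{x}^k$ and $\|\bm{x}^k\|_2=1$, the Pythagorean identity gives $\|\bm{x}^k+\alpha\bm{d}^k\|_2=\sqrt{1+\alpha^2\|\bm{d}^k\|_2^2}$, so the normalization moves the point by $\|\Proj_{\M}(\bm{x}^k+\alpha\bm{d}^k)-(\bm{x}^k+\alpha\bm{d}^k)\|_2=\|\bm{x}^k+\alpha\bm{d}^k\|_2-1\le\tfrac{\alpha^2}{2}\|\bm{d}^k\|_2^2$, where I use $\sqrt{1+u}-1\le u/2$ for $u\ge0$. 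Combining this with the $L$-Lipschitz continuity of $f$ yields $f(\Proj_{\M}(\bm{x}^k+\alpha\bm{d}^k))\le f(\bm{x}^k+\alpha\bm{d}^k)+\tfrac{L\alpha^2}{2}\|\bm{d}^k\|_2^2$. Adding the two estimates gives $f(\Proj_{\M}(\bm{x}^k+\alpha\bm{d}^k))\le f(\bm{x}^k)-\tfrac{\alpha}{t}\|\bm{d}^k\|_2^2+\tfrac{L\alpha^2}{2}\|\bm{d}^k\|_2^2$, and~\eqref{ineq:suff_decrease} follows once $\tfrac{L\alpha}{2}\le\tfrac{1}{2t}$, i.e.\ $\alpha\le 1/(tL)$; together with the restriction $\alpha\le1$ from the interpolation step, this is exactly $\alpha\le\bar\alpha$. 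The only real obstacle is this final balancing: it is precisely the extra factor of two harvested in the first step that leaves a net $\tfrac{\alpha}{2t}\|\bm{d}^k\|_2^2$ decrease after absorbing the $O(\alpha^2)$ retraction term, and this is the source of the sharper constants relative to~\cite[Lemma 5.2]{chen2018proximal}.

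The remaining claims are bookkeeping. Since~\eqref{ineq:suff_decrease} holds for every $\alpha\le\bar\alpha$, backtracking must accept as soon as $\beta^{j_k}\le\bar\alpha$; hence $j_k$ is the smallest index with $\beta^{j_k}\le\bar\alpha$, so $\beta^{j_k-1}>\bar\alpha$ whenever $j_k\ge1$, giving $\alpha_k=\beta^{j_k}>\beta\bar\alpha$ (and $\alpha_k=1>\beta\bar\alpha$ when $j_k=0$). Solving $\beta^{j_k}\le\bar\alpha$ for $j_k$ (recall $\beta\in(0,1)$) gives $j_k\le\lceil\log_{\beta}\bar\alpha\rceil$, so the line search terminates after at most $\lceil\log_{\beta}\bar\alpha\rceil+1$ evaluations. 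Finally, if $t\le 1/L$ then $1/(tL)\ge1$ forces $\bar\alpha=1$, so~\eqref{ineq:suff_decrease} already holds at $\alpha=1$ and $j_k=0$ is admissible, i.e.\ no line search is needed.
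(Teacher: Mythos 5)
Your proof is correct and follows essentially the same route as the paper's: the strong-convexity comparison of $\bm{d}^k$ against the feasible point $\bm{0}$ yielding the crucial $\tfrac{1}{t}\|\bm{d}^k\|_2^2$ decrease, convex interpolation to step size $\alpha$, the retraction error bound $\|\Proj_{\M}(\bm{x}^k+\alpha\bm{d}^k)-(\bm{x}^k+\alpha\bm{d}^k)\|_2\le\tfrac{\alpha^2}{2}\|\bm{d}^k\|_2^2$ (which is exactly the paper's Proposition on $\Proj_{\M}$, re-derived inline), and the final balancing under $\alpha\le\min\{1,1/(tL)\}$. The line-search bookkeeping also matches the paper's claims, so there is nothing to add.
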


\subsection{Convergence Analysis of ManPPA}
In this subsection, we study the convergence behavior of ManPPA. Recall from~\cite[Lemma 5.3]{chen2018proximal} that if $\bm{d}^k=\bm{0}$ in~\eqref{ManPPA-dpcp-sub}, then $\bm{x}^k\in\M$ is a stationary point of problem~\eqref{DPCP}. This motivates us to call $\bm{x}^k\in\M$ an \emph{$\epsilon$-stationary point} of problem~\eqref{DPCP} with $\epsilon\ge0$ if the solution $\bm{d}^k$ to~\eqref{ManPPA-dpcp-sub} satisfies $\|\bm{d}^k\|_2 \le \epsilon$. By specializing the convergence results in~\cite[Theorem 5.5]{chen2018proximal} for ManPG to ManPPA, we obtain the following theorem:
\begin{theorem}\label{thm:global_convergence}
Any limit point of the sequence $\{\bm{x}^k\}_k$ generated by Algorithm \ref{alg:manppa} is a stationary point of problem \eqref{DPCP}. Moreover, Algorithm \ref{alg:manppa} with $t=1/L$ returns an $\epsilon$-stationary point $\bm{x}^k$ in at most $\lceil 2(f(\bm{x}^0)-f^*)/(L\epsilon^2) \rceil$ iterations, where $f^*$ is the optimal value of problem~\eqref{DPCP}.
\end{theorem}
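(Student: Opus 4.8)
The plan is to obtain both assertions from the sufficient-decrease inequality established in Proposition~\ref{prop:suff_decrease}, together with the stationarity characterization recalled just before the theorem (namely, $\bm{d}^k=\bm{0}$ implies $\bm{x}^k$ is stationary). The complexity bound follows from a telescoping argument once the line search is eliminated, while the limit-point statement combines summability of $\{\normtwo{\bm{d}^k}_2^2\}$ with a continuity property of the subproblem solution map.

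I would first prove the complexity bound. Setting $t = 1/L$ gives $\bar{\alpha} = \min\{1, 1/(tL)\} = 1$, so by the last sentence of Proposition~\ref{prop:suff_decrease} the line search terminates with $j_k = 0$, i.e., $\alpha_k = 1$ for every $k$. The sufficient-decrease inequality~\eqref{ineq:suff_decrease} then reads
\[
f(\bm{x}^{k+1}) \le f(\bm{x}^k) - \frac{L}{2}\normtwo{\bm{d}^k}_2^2.
\]
Summing over $k = 0, \ldots, K-1$ and using $f(\bm{x}^K) \ge f^*$ (legitimate since every $\bm{x}^k\in\M$) yields
\[
\frac{L}{2}\sum_{k=0}^{K-1}\normtwo{\bm{d}^k}_2^2 \le f(\bm{x}^0) - f^*,
\]
so that $\min_{0 \le k \le K-1}\normtwo{\bm{d}^k}_2^2 \le 2(f(\bm{x}^0)-f^*)/(LK)$. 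Taking $K = \lceil 2(f(\bm{x}^0)-f^*)/(L\epsilon^2)\rceil$ forces the right-hand side below $\epsilon^2$, so some iterate satisfies $\normtwo{\bm{d}^k}_2 \le \epsilon$ and is thus $\epsilon$-stationary.

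For the limit-point assertion I would work with a general step size $t > 0$. Since $\alpha_k = \beta^{j_k} > \beta\bar{\alpha} > 0$, inequality~\eqref{ineq:suff_decrease} gives $f(\bm{x}^{k+1}) \le f(\bm{x}^k) - \tfrac{\beta\bar{\alpha}}{2t}\normtwo{\bm{d}^k}_2^2$. The sequence $\{f(\bm{x}^k)\}_k$ is therefore nonincreasing and bounded below by $f^*$, hence convergent; consequently $f(\bm{x}^k) - f(\bm{x}^{k+1}) \to 0$, which forces $\normtwo{\bm{d}^k}_2 \to 0$. Now let $\bm{x}^*$ be a limit point, say $\bm{x}^{k_j} \to \bm{x}^*$. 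The subproblem~\eqref{ManPPA-dpcp-sub} is a linearly constrained strongly convex quadratic program whose data depend continuously on the base point $\bm{x}^k$, so its unique solution defines a continuous map $\bm{x} \mapsto \bm{d}(\bm{x})$; passing to the limit along the subsequence gives $\bm{d}(\bm{x}^*) = \lim_j \bm{d}^{k_j} = \bm{0}$. By the stationarity characterization, $\bm{x}^*$ is a stationary point of~\eqref{DPCP}.

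The main obstacle is the continuity of the solution map $\bm{x} \mapsto \bm{d}(\bm{x})$. I would establish it through the KKT system of~\eqref{ManPPA-dpcp-sub}: the optimal $\bm{d}^k$ satisfies $\bm{0} \in \bm{Y}\,\partial\normone{\cdot}(\bm{Y}^\top(\bm{x}^k+\bm{d}^k)) + \tfrac{1}{t}\bm{d}^k + \lambda_k \bm{x}^k$ for some multiplier $\lambda_k$, with $(\bm{d}^k)^\top\bm{x}^k = 0$. The delicate point is controlling $\{\lambda_k\}$ so that one may pass to the limit; here the fact that $\bm{x}^* \in \M$ has unit norm ensures that the constraint gradient $\bm{x}^k$ stays bounded away from $\bm{0}$, which keeps $\{\lambda_k\}$ bounded along the subsequence. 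The limiting inclusion at $(\bm{x}^*,\bm{0})$ then coincides with $\bm{0} \in \partial_R f(\bm{x}^*)$, certifying stationarity. Alternatively, this continuity is exactly the content of the corresponding lemma in~\cite{chen2018proximal}, which may be invoked directly.
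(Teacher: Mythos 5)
Your proposal is correct, but it is worth noting that the paper does not actually write out a proof of this theorem at all: it obtains the statement by specializing~\cite[Theorem 5.5]{chen2018proximal} (the convergence theorem for ManPG) to problem~\eqref{DPCP}. What you have done is reconstruct, in a self-contained way, the argument that underlies that citation. Your complexity bound is exactly right: with $t=1/L$ Proposition~\ref{prop:suff_decrease} kills the line search, the per-iteration decrease is $\tfrac{L}{2}\normtwo{\bm{d}^k}_2^2$, and telescoping against $f(\bm{x}^K)\ge f^*$ gives the stated iteration count (the bound $\min_k \normtwo{\bm{d}^k}_2^2 \le \epsilon^2$ suffices since $\epsilon$-stationarity is defined with a non-strict inequality). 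For the limit-point claim, your fallback KKT route is the part that actually carries the argument, and it is sound: writing the optimality conditions of~\eqref{ManPPA-dpcp-sub} as $\bm{0} \in \bm{Y}\,\partial\normone{\cdot}(\bm{Y}^\top(\bm{x}^k+\bm{d}^k)) + \tfrac{1}{t}\bm{d}^k + \lambda_k\bm{x}^k$ and taking the inner product with $\bm{x}^k$ (using $\bm{d}^{k\top}\bm{x}^k=0$ and $\normtwo{\bm{x}^k}_2=1$) gives $\lambda_k = -\bm{x}^{k\top}\bm{Y}\bm{s}^k$, which is bounded because the $\ell_1$ subgradients $\bm{s}^k$ lie in $[-1,1]^p$; outer semicontinuity of $\partial\normone{\cdot}$ together with $\normtwo{\bm{d}^k}_2\to 0$ (forced by monotone decrease and $\alpha_k>\beta\bar{\alpha}$) then yields $\bm{0}\in\partial f(\bm{x}^*)+\lambda^*\bm{x}^*$, which is equivalent to $\bm{0}\in\partial_R f(\bm{x}^*)$. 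The trade-off between the two routes: the paper's citation is shorter and inherits the generality of the ManPG analysis (Stiefel manifold, smooth-plus-nonsmooth objectives), while your argument makes the theorem verifiable entirely within the paper using only Proposition~\ref{prop:suff_decrease} and elementary convex analysis, and it makes explicit where the unit-sphere structure is used (the clean formula for $\lambda_k$). One small caveat: your primary justification for continuity of the solution map $\bm{x}\mapsto\bm{d}(\bm{x})$ (``the data depend continuously, so the unique minimizer is continuous'') is not automatic for parametric programs with a moving feasible set and would itself need a Berge-type argument; since your KKT argument already certifies stationarity of the limit directly, I would drop the solution-map framing and lead with the KKT passage to the limit.
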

Theorem~\ref{thm:global_convergence} shows that ManPPA has an iteration complexity of $\mathcal{O}(\epsilon^{-2})$, which is superior to the $\mathcal{O}(\epsilon^{-4})$ bound established for RSGM in~\cite{li2019nonsmooth}.

Now, let us analyze the convergence rate of ManPPA in the setting where problem~\eqref{DPCP} possesses the sharpness property. Such a setting is highly relevant in applications, as both the ODL problem and DPCP formulation of the RSR problem give rise to sharp instances of problem~\eqref{DPCP} under certain assumptions on the data; see~\cite[Proposition C.8]{bai2018subgradient} and \cite[Proposition 4]{li2019nonsmooth}. To proceed, we first introduce the notion of sharpness.
\begin{definition}[Sharpness; see, e.g., \cite{burke1993weak}]\label{def:sharpness2}
We say that $\mathcal{X} \subseteq \M$ is a set of \emph{weak sharp minima} for the function $f$ with parameters $(\alpha,\delta)$ (where $\alpha,\delta>0$) if for any $\bm{x} \in \mathcal{B}(\delta) := \{ \bm{x} \in \M \mid \dist(\bm{x},\mathcal{X}) \le \delta \}$, we have
\be\label{def:sharp_ineq}
f(\bm{x}) - f(\bar{\bm x}) \ge \alpha\cdot\dist(\bm{x},\mathcal{X}), \quad\forall \bar{\bm x} \in \mathcal{X}.
\ee
\end{definition}
From the definition, we see that if $\mathcal{X}$ is a set of weak sharp minima of $f$, then it is the set of minimizers of $f$ over $\mathcal{B}(\delta)$. Moreover, the function value grows linearly with the distance to $\mathcal{X}$. In the presence of such a regularity property, ManPPA can be shown to converge at a much faster rate. The following result, which has not appeared in the literature before and is thus new, constitutes the first main contribution of this paper. 
\begin{theorem}\label{thm:local_rate}
Suppose that $\mathcal{X}\subseteq\M$ is a set of weak sharp minima for the function $f$ with parameters $(\alpha,\delta)$. Let $\{\bm{x}^k\}_k$ be the sequence generated by Algorithm~\ref{alg:manppa} with $\dist(\bm{x}^0,\mathcal{X})< \overline{\delta}:=\min \left\{\delta, \tfrac{\alpha}{L} \right\}$ and $t\leq  \min \left\{ \tfrac{\overline{\delta}}{2\alpha-L \overline{\delta}}, \frac{2\overline{\delta} \alpha - L\overline{\delta}^2 }{L^2} \right\}$. Then, we have
\be\label{the-B5-proof-1}
\dist(\bm{x}^k,\mathcal{X})\leq \overline{\delta}, \quad \forall k\geq 0,
\ee
\be\label{the-B5-proof-2}
\dist(\bm{x}^{k+1} , \mathcal{X})\leq \mathcal{O}( \dist^2(\bm{x}^k,\mathcal{X})),\quad \forall k\geq 0.
\ee
\end{theorem}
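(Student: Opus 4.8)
My plan is to prove \eqref{the-B5-proof-1} and \eqref{the-B5-proof-2} together by induction on $k$, the driving estimate being a one-step bound $\dist(\bm{x}^{k+1},\mathcal{X})\le \tfrac{L}{2\alpha}\dist^2(\bm{x}^k,\mathcal{X})$ whose constant satisfies $\tfrac{L}{2\alpha}\cdot\overline{\delta}\le \tfrac{L}{2\alpha}\cdot\tfrac{\alpha}{L}=\tfrac12<1$; this single inequality yields the quadratic rate and, because the constant times $\overline{\delta}$ is below $1$, also propagates the invariant $\dist(\bm{x}^k,\mathcal{X})\le\overline{\delta}$. Two preliminary reductions set the stage. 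First, the hypotheses force $t\le 1/L$: since $\overline{\delta}\le\alpha/L$ we have $2\alpha-L\overline{\delta}\ge\alpha$, so $t\le\overline{\delta}/(2\alpha-L\overline{\delta})\le\overline{\delta}/\alpha\le 1/L$; by Proposition~\ref{prop:suff_decrease} we then have $\bar{\alpha}=1$, no line search is triggered, and $\bm{x}^{k+1}=(\bm{x}^k+\bm{d}^k)/\|\bm{x}^k+\bm{d}^k\|$. Second, since $\bm{d}^k$ is tangent ($\langle\bm{d}^k,\bm{x}^k\rangle=0$) we have $\|\bm{x}^k+\bm{d}^k\|=\sqrt{1+\|\bm{d}^k\|^2}$, and as $f(\bm{x})=\|\bm{Y}^\top\bm{x}\|_1$ is positively homogeneous of degree one, the \emph{exact} identity $f(\bm{x}^{k+1})=f(\bm{x}^k+\bm{d}^k)/\sqrt{1+\|\bm{d}^k\|^2}\le f(\bm{x}^k+\bm{d}^k)$ holds. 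This is the mechanism by which the retraction converts a tangential decrease into genuine progress toward $\mathcal{X}$.

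Fix $k$ and assume $r:=\dist(\bm{x}^k,\mathcal{X})\le\overline{\delta}$. Let $\bar{\bm{x}}:=\Proj_{\mathcal{X}}(\bm{x}^k)\in\mathcal{X}$, so $\|\bm{x}^k-\bar{\bm{x}}\|=r$, and take as comparison direction the tangential projection $\bm{d}^\star:=(\bm{I}_n-\bm{x}^k(\bm{x}^k)^\top)\bar{\bm{x}}$, which is feasible for \eqref{ManPPA-dpcp-sub}. Using $\langle\bm{x}^k,\bar{\bm{x}}\rangle=1-r^2/2$, a direct computation gives $\|\bm{d}^\star\|^2=r^2(1-r^2/4)\le r^2$ and $\|(\bm{x}^k+\bm{d}^\star)-\bar{\bm{x}}\|=r^2/2$; Lipschitzness then yields $f(\bm{x}^k+\bm{d}^\star)\le f(\bar{\bm{x}})+\tfrac{L}{2}r^2$. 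I would then chain four facts: (i) optimality of $\bm{d}^k$ for the strongly convex subproblem, $f(\bm{x}^k+\bm{d}^k)+\tfrac1{2t}\|\bm{d}^k\|^2\le f(\bm{x}^k+\bm{d}^\star)+\tfrac1{2t}\|\bm{d}^\star\|^2$; (ii) the homogeneity identity $f(\bm{x}^{k+1})\le f(\bm{x}^k+\bm{d}^k)$; (iii) sharpness \eqref{def:sharp_ineq} at $\bm{x}^{k+1}$, $f(\bm{x}^{k+1})-f(\bar{\bm{x}})\ge\alpha\,\dist(\bm{x}^{k+1},\mathcal{X})$; and (iv) a lower bound $\|\bm{d}^k\|\ge\|\bm{d}^\star\|$ showing the prox step is at least as long as the ideal tangential step. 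Granting (iv), the proximal terms in (i) obey $\tfrac1{2t}(\|\bm{d}^\star\|^2-\|\bm{d}^k\|^2)\le 0$, so (i)--(iii) collapse to $\alpha\,\dist(\bm{x}^{k+1},\mathcal{X})\le f(\bm{x}^{k+1})-f(\bar{\bm{x}})\le f(\bm{x}^k+\bm{d}^\star)-f(\bar{\bm{x}})\le\tfrac{L}{2}r^2$, which is the target one-step bound and hence \eqref{the-B5-proof-2}.

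The induction for \eqref{the-B5-proof-1} then closes automatically: $\dist(\bm{x}^{k+1},\mathcal{X})\le\tfrac{L}{2\alpha}r^2\le\tfrac12 r\le\overline{\delta}$, and writing $\rho_k:=\dist(\bm{x}^k,\mathcal{X})/\overline{\delta}$ gives $\rho_{k+1}\le\rho_k^2$ with $\rho_0<1$, i.e.\ quadratic convergence of $\{\bm{x}^k\}$ to $\mathcal{X}$. The one delicate point is that step (iii) invokes sharpness at $\bm{x}^{k+1}$, which is licit only once $\bm{x}^{k+1}\in\mathcal{B}(\delta)$ is known; I would break this apparent circularity with a bootstrap. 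The upper bound $f(\bm{x}^{k+1})-f(\bar{\bm{x}})\le\tfrac{L}{2}r^2$ above uses only (i), (ii), (iv) and Lipschitzness, \emph{not} (iii). Combining it with the geometric estimate $\dist(\bm{x}^{k+1},\mathcal{X})\le\|\bm{x}^{k+1}-\bm{y}\|+\dist(\bm{y},\mathcal{X})$, where $\bm{y}:=\Proj_{\M}(\bm{x}^k+\bm{d}^\star)$ satisfies $\dist(\bm{y},\mathcal{X})=\mathcal{O}(r^2)$ (the projection retraction of the step toward $\bar{\bm{x}}$ lands within $\mathcal{O}(r^2)$, in fact $\mathcal{O}(r^3)$, of $\bar{\bm{x}}$) and $\|\bm{x}^{k+1}-\bm{y}\|\le\|\bm{d}^k-\bm{d}^\star\|$ by nonexpansiveness of $\Proj_{\M}$ on $\{\|\cdot\|\ge 1\}$, lets me bound $\dist(\bm{x}^{k+1},\mathcal{X})$ without sharpness at $\bm{x}^{k+1}$ and thereby confirm $\bm{x}^{k+1}\in\mathcal{B}(\overline{\delta})\subseteq\mathcal{B}(\delta)$ using $r<\overline{\delta}$.

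I expect the crux and main obstacle to be ingredient (iv): proving that the proximal step is not shorter than $\bm{d}^\star$ (equivalently, that $\bm{d}^k$ is $\mathcal{O}(r^2)$-close to $\bm{d}^\star$, which also feeds the bootstrap above). The intuition is that sharpness forces the Riemannian slope of $f$ at $\bm{x}^k$ to be at least $\alpha$, so the minimizer of the $\tfrac1t$-strongly convex subproblem travels a distance of order $t\alpha$ before the proximal term stops it; the thresholds on $t$ and on $\overline{\delta}$ (together with $r\le\overline{\delta}\le\alpha/L$) are precisely calibrated so that this travel distance dominates $\|\bm{d}^\star\|\le r$, forcing $\|\bm{d}^k\|\ge\|\bm{d}^\star\|$ and the cancellation of the $\tfrac1{2t}$ term. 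Turning this into a rigorous slope lower bound is where the work lies, because the subgradient $\bm{g}\in\partial f(\bm{x}^k+\bm{d}^k)$ appearing in the optimality condition $\bm{d}^k=-t(\bm{I}_n-\bm{x}^k(\bm{x}^k)^\top)\bm{g}$ is evaluated off the manifold and the Euclidean subdifferential of $\|\bm{Y}^\top\cdot\|_1$ is set-valued and may jump; I would therefore relate $\|(\bm{I}_n-\bm{x}^k(\bm{x}^k)^\top)\bm{g}\|$ to the sharpness constant $\alpha$ via a careful subdifferential argument. By contrast, the geometric identities for $\bm{d}^\star$, the homogeneity relation, and the final induction are routine.
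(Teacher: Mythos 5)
There is a genuine gap, and it sits exactly where you predicted: your ingredient (iv) is false. From the optimality condition of \eqref{ManPPA-dpcp-sub}, $\bm{d}^k=-t(\bm{I}_n-\bm{x}^k\bm{x}^{k\top})\bm{s}^k$ for some $\bm{s}^k\in\partial f(\bm{x}^k+\bm{d}^k)$, so $\|\bm{d}^k\|_2\le tL$ \emph{always}. The theorem only \emph{upper}-bounds $t$, so every smaller $t$ is admissible: taking $t< r\sqrt{1-r^2/4}/L$ with $r=\dist(\bm{x}^0,\mathcal{X})>0$ forces $\|\bm{d}^0\|_2\le tL<\|\bm{d}^\star\|_2$ already at the first step. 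Your calibration intuition runs backwards: sharpness can give $\|\bm{d}^k\|_2\gtrsim t\alpha$, but the hypothesis $t\le\overline{\delta}/(2\alpha-L\overline{\delta})$ together with $2\alpha-L\overline{\delta}\ge\alpha$ yields $t\alpha\le\overline{\delta}$, so the "travel distance" $t\alpha$ is dominated \emph{by} $r$ (and is far smaller than $r$ when $t$ is small), never the other way around. Without (iv) you can still drop the nonnegative term $\tfrac{1}{2t}\|\bm{d}^k\|_2^2$ from the left-hand side of your (i) and conclude $\alpha\,\dist(\bm{x}^{k+1},\mathcal{X})\le\bigl(\tfrac{L}{2}+\tfrac{1}{2t}\bigr)r^2$, but then the one-step constant is $\tfrac{1}{\alpha}\bigl(\tfrac{L}{2}+\tfrac{1}{2t}\bigr)$, not $\tfrac{L}{2\alpha}$, and your induction no longer closes: for small $t$ the quantity $\tfrac{\overline{\delta}}{2t\alpha}$ exceeds $1$, so the quadratic one-step bound does not propagate the invariant $\dist(\bm{x}^k,\mathcal{X})\le\overline{\delta}$. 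Your bootstrap fails for the same reason, since it needs $\|\bm{d}^k-\bm{d}^\star\|_2=\mathcal{O}(r^2)$, whereas for small $t$ one has $\|\bm{d}^k-\bm{d}^\star\|_2\approx\|\bm{d}^\star\|_2\approx r$.

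The paper escapes this trap by proving \eqref{the-B5-proof-1} \emph{independently} of the quadratic rate. Proposition~\ref{prop:sphere linear rate recursion} (built from nonexpansiveness of $\Proj_{\M}$ in Proposition~\ref{polar-nonexpensive}, the optimality condition for $\bm{d}^k$, and the Riemannian subgradient inequality of Proposition~\ref{weakly-inequality}) gives, for any $\bar{\bm x}\in\M$, the recursion $\|\bm{x}^{k+1}-\bar{\bm x}\|_2^2\le(1+tL)\|\bm{x}^k-\bar{\bm x}\|_2^2-2t(f(\bm{x}^k)-f(\bar{\bm x}))+t^2L^2$; combining with sharpness \eqref{def:sharp_ineq} yields $\dist^2(\bm{x}^{k+1},\mathcal{X})\le\phi(\dist(\bm{x}^k,\mathcal{X}))$ with $\phi(s)=(1+tL)s^2-2t\alpha s+t^2L^2$, and both parts of the hypothesis on $t$ are used precisely to show $\phi(s)\le\overline{\delta}^2$ on $[0,\overline{\delta}]$ --- here \emph{small} $t$ is what keeps the additive error $t^2L^2$ controllable, the opposite regime from what (iv) demands. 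With the invariant established, sharpness at $\bm{x}^{k+1}$ is legitimate, and \eqref{the-B5-proof-2} follows from your own steps (i)--(iii) with the harmless $t$-dependent constant $4(L+1/t)/\alpha$: the paper uses the comparison direction $\bar{\bm d}^k=\bm{x}^*/(\bm{x}^{k\top}\bm{x}^*)-\bm{x}^k$, which retracts exactly onto $\bm{x}^*$, but your tangential projection $\bm{d}^\star$ works equally well, and your homogeneity identity $f(\bm{x}^{k+1})=f(\bm{x}^k+\bm{d}^k)/\sqrt{1+\|\bm{d}^k\|_2^2}$ is a clean substitute for the paper's Lipschitz-plus-retraction bound (Proposition~\ref{prop:retr}). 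In short: keep your (i)--(iii), discard (iv) and the derived-invariance architecture, and import a recursion-based proof of \eqref{the-B5-proof-1}.
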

Theorem~\ref{thm:local_rate} establishes the quadratic convergence rate of ManPPA when applied to a sharp instance of problem~\eqref{DPCP}. Again, this is superior to the linear convergence rate of RSGM established in~\cite{li2019nonsmooth} for this setting. The proof of Theorem~\ref{thm:local_rate} can be found in Appendix~\ref{app:conv-anal-local}. {Note that since $\M$ is nonconvex, one cannot directly apply standard convergence analysis techniques for PPA (see, e.g., \cite{Rockafellar-76}) to obtain Theorem~\ref{thm:local_rate}. The key to overcoming this diffculty is the new Riemannian subgradient inequality we establish in Proposition \ref{weakly-inequality} (see Appendix \ref{app:conv-anal-local}), which provides a path for extending the convergence analysis of PPA to that of ManPPA.}

It should be pointed out that the results in Theorems~\ref{thm:global_convergence} and~\ref{thm:local_rate} do not assume any generative model of the data matrix $\bm{Y}$. By contrast, the results developed in, e.g.,~\cite{bai2018subgradient} for the ODL problem and~\cite{zhu2018dualpcp} for the DPCP formulation of the RSR problem do assume certain generative models of the data. Although the latter results may yield qualitatively sharper convergence guarantees for instances of~\eqref{DPCP} that arise from the ODL problem or the DPCP formulation of the RSR problem, the former apply to arbitrary instances of \eqref{DPCP}.

\subsection{Solving the Subproblem \eqref{ManPPA-dpcp-sub}} \label{subsec:iALM}
Observe that each iteration of ManPPA requires solving the subproblem~\eqref{ManPPA-dpcp-sub} to obtain the search direction. Thus, the efficiency of ManPPA depends not only on its convergence rate (which has already been studied in Theorems~\ref{thm:global_convergence} and~\ref{thm:local_rate}) but also on how fast the subproblem~\eqref{ManPPA-dpcp-sub} can be solved. To address the latter, we note that  \eqref{ManPPA-dpcp-sub} is a linearly constrained strongly convex quadratic minimization problem. This motivates us to adopt the SSN-based ALM originally developed in~\cite{Sun-lasso-2018} for LASSO-type problems to solve it. As we shall see, such an approach yields a highly efficient method for solving the subproblem~\eqref{ManPPA-dpcp-sub}.

To set the stage for our development, let us drop the index $k$ from~\eqref{ManPPA-dpcp-sub} for simplicity and set $\bm{c} = \bm{Y}^\top\bm{x}$. Then, the subproblem \eqref{ManPPA-dpcp-sub} can be equivalently written as
\be\label{ManPPA-dpcp-sub-rewrite}
\min_{\bm{d}\in\R^n, \atop \bm{u}\in\R^p} \ \frac{1}{2}\|\bm{d}\|_2^2+t\|\bm{u}\|_1 \,\,\, \st \,\,\, \bm{Y}^\top\bm{d} + \bm{c} = \bm{u},\, \bm{d}^\top \bm{x} = 0.
\ee
At this point, one may be tempted to use ADMM to solve problem~\eqref{ManPPA-dpcp-sub-rewrite}. However, from a practical point of view, ADMM is often unable to return a high-accuracy solution in an efficient manner. Since \eqref{ManPPA-dpcp-sub-rewrite} is a subproblem in ManPPA, a low-accuracy solution will adversely affect the convergence rate of ManPPA. In fact, this has been observed in our numerical experiments. Therefore, we propose to use an inexact ALM, which can solve problem~\eqref{ManPPA-dpcp-sub-rewrite} efficiently and accurately. This makes it possible for ManPPA to achieve fast convergence. To describe the algorithm, let us first write down the augmented Lagrangian function corresponding to~\eqref{ManPPA-dpcp-sub-rewrite}:
\be \label{augmented_func}
\mathcal{L}_{\sigma}(\bm{d},\bm{u};y,\bm{z}) := \frac{1}{2}\|\bm{d}\|_2^2+t\|\bm{u}\|_1+ y\cdot \bm{d}^\top\bm{x} + \langle \bm{z}, \bm{Y}^\top\bm{d} + \bm{c} - \bm{u} \rangle + \frac{\sigma}{2}(\bm{d}^\top\bm{x})^2  +\frac{\sigma}{2} \| \bm{Y}^\top\bm{d} + \bm{c} - \bm{u}\|_2^2.
\ee
Here, $y\in\mathbb{R}$ and $\bm{z}\in\mathbb{R}^p$ are Lagrange multipliers (dual variables) associated with the constraints in~\eqref{ManPPA-dpcp-sub-rewrite} and $\sigma>0$ is a penalty parameter. Then, the inexact ALM for solving \eqref{ManPPA-dpcp-sub-rewrite} can be described as follows~\cite{rockafellar1976augmented,Sun-lasso-2018}:

\begin{algorithm}[ht]
	\caption{Inexact ALM for Solving Problem \eqref{ManPPA-dpcp-sub-rewrite}}
	\label{alg:augmented_vector}
	\begin{algorithmic}[1]
		\STATE{Input: $\bm{d}^0 \in \R^n$, $\bm{u}^0 \in \R^p$, $y^0 \in \R$, $\bm{z}^0 \in \R^p$, $\sigma_0>0$.}
		\FOR{$j=0,1,\dots$}
		\STATE Compute
		\begin{align} \label{subproblem:augmented}
		(\bm{d}^{j+1},\bm{u}^{j+1}) &\approx \argmin_{\bm{d}\in\R^n, \atop \bm{u}\in\R^p}  \Psi_j(\bm{d},\bm{u}) := \mathcal{L}_{\sigma_{j}}(\bm{d},\bm{u};y^{j},\bm{z}^{j}).
		\end{align}
		\vspace{-0.5\baselineskip}
		\STATE Update dual variables:
		\begin{align*}
		y^{j+1} &= y^{j}+\sigma_{j}(\bm{d}^{j+1})^{\top}\bm{x}, \\
\bm{z}^{j+1} &= \bm{z}^{j}+\sigma_{j}(\bm{Y}^\top\bm{d}^{j+1}+\bm{c}-\bm{u}^{j+1}).
		\end{align*}
		\vspace{-0.8\baselineskip}
        \STATE Update $\sigma_{j+1}\nearrow\sigma_{\infty}\leq+\infty$.
		\ENDFOR		
	\end{algorithmic}
\end{algorithm}

Since the subproblem~\eqref{subproblem:augmented} can only be solved inexactly in general, we adopt the following stopping criteria, which are standard in the literature (see~\cite{rockafellar1976augmented,Sun-lasso-2018}):
\begin{subequations}
\begin{align}
& \Psi_{j}(\bm{d}^{j+1},\bm{u}^{j+1}) - \Psi_{j}^* \leq \frac{\varepsilon^2_j}{2\sigma_{j}}, \ \sum_{j=0}^\infty\varepsilon_j < \infty, \label{augmented_subproblem_inexact_cond1}  \\
& \Psi_{j}(\bm{d}^{j+1},\bm{u}^{j+1}) - \Psi_{j}^* \leq \frac{\delta^2_j}{2\sigma_j} \| (y^{j+1},\bm{z}^{j+1})-(y^j,\bm{z}^j) \|_2^2, \ \sum_{j=0}^\infty\delta_j < \infty,  \label{augmented_subproblem_inexact_cond2} \\
& \dist(\mathbf{0},\partial\Psi_{j}(\bm{d}^{j+1},\bm{u}^{j+1})) \leq \frac{\delta_j^{\prime}}{\sigma_j}\| (y^{j+1},\bm{z}^{j+1})-(y^j,\bm{z}^j)\|_2, \ \delta_j^{\prime} \searrow 0. \label{augmented_subproblem_inexact_cond3}
\end{align}
\end{subequations}
Here, $\Psi_j^*$ is the optimal value of~\eqref{subproblem:augmented}. Conditions \eqref{augmented_subproblem_inexact_cond1}--\eqref{augmented_subproblem_inexact_cond3} ensure that starting from any initial point $(\bm{d}^0,\bm{u}^0;y^0,\bm{z}^0)$, the inexact ALM (Algorithm~\ref{alg:augmented_vector}) will converge at a superlinear rate to an optimal solution to problem~\eqref{ManPPA-dpcp-sub-rewrite}. This result, which constitutes the second main contribution of this paper, is obtained from  a new perturbation analysis of the solution set of the subproblem~\eqref{ManPPA-dpcp-sub} and its dual. The proof can be found in Appendix~\ref{app:ALM-SSN}.

Now, it remains to discuss how to solve the subproblem~\eqref{subproblem:augmented} in an efficient manner. Again, let us drop the index $j$ in~\eqref{subproblem:augmented} for simplicity. By simple manipulation, we have
\[
\Psi(\bm{d},\bm{u}) = \frac{1}{2}\|\bm{d}\|^2_2 + \frac{\sigma}{2}\left(\bm{d}^\top\bm{x}+\frac{y}{\sigma}\right)^2 - \frac{y^2}{2\sigma}-\frac{\|\bm{z}\|_2^2}{2\sigma} + t\|\bm{u}\|_1 + \frac{\sigma}{2} \left\| \bm{Y}^\top \bm{d} + \bm{c} + \frac{\bm{z}}{\sigma} - \bm{u} \right\|_2^2.
\]
Consider the function $\bm{d}\mapsto\psi(\bm{d}):=\inf_{\bm{u}\in\R^p} \Psi(\bm{d},\bm{u})$. Upon letting $\bm{w} = \bm{Y}^\top \bm{d} + \bm{c} + \frac{\bm{z}}{\sigma} \in \R^p$ and using the definition of the proximal mapping of $\bm{u}\mapsto h(\bm{u}):=t\|\bm{u}\|_1$, we have
\[
\psi(\bm{d}) = \frac{1}{2}\|\bm{d}\|^2_2 + \frac{\sigma}{2}\left(\bm{d}^\top\bm{x}+\frac{y}{\sigma}\right)^2 - \frac{y^2}{2\sigma}-\frac{\|\bm{z}\|_2^2}{2\sigma} + h(\prox_{h/\sigma}(\bm{w})) + \frac{\sigma}{2}\| \bm{w} - \prox_{h/\sigma}(\bm{w}) \|_2^2.
\]
It follows that $(\bar{\bm{d}},\bar{\bm{u}})=\argmin_{\bm{d}\in\R^n, \bm{u}\in\R^p} \ \Psi(\bm{d},\bm{u})$ if and only if
\[
\bar{\bm{d}} = \argmin_{\bm{d}\in\R^n} \ \psi(\bm{d}), \quad \bar{\bm{u}} = \prox_{h/\sigma} \left( \bm{Y}^\top\bar{\bm{d}}+\bm{c}+\frac{\bm{z}}{\sigma} \right).
\]
Using~\cite[Theorem 2.26]{RW04} and the Moreau decomposition $\bm{w} = \prox_{h/\sigma} (\bm{w}) + (1/\sigma) \prox_{\sigma h^*}(\sigma \bm{w})$, where $h^*$ is the conjugate function of $h$, it can be deduced that $\psi$ is strongly convex and continuously differentiable with
\begin{align*}
\nabla \psi(\bm{d})=\bm{d} + \sigma \left(\bm{d}^\top\bm{x} + \frac{y}{\sigma}\right)\bm{x} + \bm{Y}\prox_{\sigma h^{*}}(\sigma \bm{w}).
\end{align*}
Thus, we can find $\bar{\bm{d}}$ by solving the nonsmooth equation
\begin{align}\label{nonsmooth-equation}
\nabla\psi(\bm{d})=\bm{0}.
\end{align}
Towards that end, we apply an SSN method, which finds the solution by successive linearization of the map $\nabla\psi$. To implement the method, we first need to compute the generalized Jacobian of $\nabla\psi$~\cite[Definition 2.6.1]{C90}, denoted by $\partial(\nabla\psi)$. By the chain rule~\cite[Corollary of Theorem 2.6.6]{C90} and the Moreau decomposition, each element $\bm{V} \in \partial(\nabla\psi)$ takes the form
\begin{equation} \label{eq:ghess}
\bm{V} = \bm{I}_n + \sigma \bm{Y}(\bm{I}_p - \bm{Q})\bm{Y}^\top + \sigma \bm{x}\bm{x}^{\top},
\end{equation}
where $\bm{Q}\in\partial\prox_{h/\sigma}(\bm{w})$. Using the definition of $h$, it can be shown that the diagonal matrix $\bm{Q} = {\rm Diag}(\bm{q})$ with
\[
q_i = \left\{
\begin{array}{c@{\quad}l}
0 & \mbox{if } |w_i| \le t/\sigma, \\
\noalign{\smallskip}
1 & \mbox{otherwise},
\end{array} \quad i=1,\ldots,p
\right.
\]
is an element of $\partial\prox_{h/\sigma}(\bm{w})$~\cite[Section 3.3]{Sun-lasso-2018} and hence can be used to define an element $\bm{V} \in \partial(\nabla\psi)$ via~\eqref{eq:ghess}. Note that the matrix $\bm{V}$ so defined is positive definite. As such, the following generic iteration of the SSN method for solving~\eqref{nonsmooth-equation} is well defined:
\begin{subequations} \label{ssn}
\begin{align}
\bm{v} &= -\bm{V}^{-1} \nabla\psi(\bm{d}^j), \label{eq:ssn-a} \\
\bm{d}^{j+1} &= \bm{d}^j + \rho_j\bm{v}
\end{align}
\end{subequations}
Here, $\rho_j>0$ is the step size. Moreover, since $\bm{I}_p-\bm{Q}$ is a diagonal matrix whose entries are either 0 or 1, the matrix $\bm{V}$ can be assembled in a very efficient manner; again, see~\cite{Sun-lasso-2018}. The detailed implementation of the SSN method for solving~\eqref{nonsmooth-equation} is given in Algorithm~\ref{alg:SSN_vector}.

\begin{algorithm}
	\caption{SSN method for Solving the Nonsmooth Equation \eqref{nonsmooth-equation}}
	\label{alg:SSN_vector}
	\begin{algorithmic}[1]
		\STATE{Input: $\mu\in(0,1/2)$, $\bar{\eta}\in[0,1)$, $\tau\in(0,1]$, $\delta\in(0,1)$.}
		\FOR{$j=0,1,\dots$}
		\STATE Choose $\bm{Q}^{j}\in\partial\prox_{h/\sigma} \left( \bm{Y}^\top \bm{d}+\bm{c}+\tfrac{\bm{z}}{\sigma} \right)$. Let $\bm{V}^{j}=\bm{I}_n + \sigma \bm{Y}(\bm{I}_p -\bm{Q}^j)\bm{Y}^\top + \sigma \bm{x}\bm{x}^{\top}$. Find an approximate solution ${\bm v}^j$ to the linear system
		\begin{align*} 
		\bm{V}^j\bm{v} = -\nabla\psi(\bm{d}^j)
		\end{align*}
		that satisfies 
		\begin{align*}
		\|\bm{V}^j{\bm v}^j + \nabla\psi(\bm{d}^j)\|_2 \leq \min\left\{ \bar{\eta}, \|\nabla\psi(\bm{d}^j)\|_2^{1+\tau} \right\}.
		\end{align*}
		\vspace{-0.8\baselineskip}
		\STATE Let $m_j$ be the smallest nonnegative integer such that
		\begin{align*}
		\psi(\bm{d}^{j}+\delta^{m_j}\bm{v}^j) \leq \psi(\bm{d}^j)+\mu\delta^{m_j}\inp{\nabla\psi(\bm{d}^j)}{\bm{v}^j}.
		\end{align*}
		\vspace{-0.8\baselineskip}
		\STATE Set $\bm{d}^{j+1}=\bm{d}^j+\rho_j\bm{v}^j$ with $\rho_j = \delta^{m_j}$.
		\ENDFOR
	\end{algorithmic}
\end{algorithm}

The SSN method (Algorithm~\ref{alg:SSN_vector}) will converge at a superlinear rate to the unique solution $\bar{\bm d}$ to~\eqref{nonsmooth-equation}. The proof can be found in Appendix~\ref{app:ALM-SSN}.

\section{Stochastic Manifold Proximal Point Algorithm}\label{sec:Sto-ManPPA}
In this section, we propose, for the first time, a stochastic ManPPA (StManPPA) for solving problem \eqref{DPCP}, which is well suited for the setting where $p$ (typically representing the number of data points) is much larger than $n$ (typically representing the ambient dimension of the data points). To begin, observe that problem \eqref{DPCP} has the finite-sum structure
\[ 
\min_{\bm{x}\in\R^n} \ \sum_{j=1}^{p} \left| \bm{y}_j^\top\bm{x} \right| \quad \st \quad \|\bm{x}\|_2=1,
\] 
where $\bm{y}_j\in\R^n$ is the $j$-th column of $Y$. When $p$ is extremely large, computing the matrix-vector product $\bm{Y}^\top\bm{x}$ can be expensive. To circumvent this difficulty, in each iteration of StManPPA, a column of $\bm{Y}$, say $\bm{y}_j$, is randomly chosen and the search direction $\bm{d}^k$ is given by
\begin{equation}\label{subproblem_sppa}
\begin{array}{rccl}
\bm{d}^k &=& \displaystyle\argmin_{\bm{d} \in \R^n} & \left| \bm{y}_j^\top (\bm{x}^k+\bm{d}) \right| + \displaystyle\frac{1}{2t}\|\bm{d}\|_2^2 \\
& & \st  & \bm{d}^\top \bm{x}^k = 0.
\end{array}
\end{equation}
The key advantage of StManPPA is that the subproblem \eqref{subproblem_sppa} admits a closed-form solution that is very easy to compute.
\begin{proposition}\label{prop:sto-sol}
Let $\mu=t(\bm{y}_j^\top\bm{x}^k)$. Then, the solution to \eqref{subproblem_sppa} is given by
\begin{equation} \label{opt-d}
 \bm{d}^k = \left\{
 \begin{array}{cl}
 \mu \bm{x}^k - t\bm{y}_j & \text{if }(1+\mu)\mu/t - t\|\bm{y}_j\|_2^2>0,  \\
 \noalign{\smallskip}
 -\mu \bm{x}^k + t\bm{y}_j & \text{if } (1-\mu)\mu/t + t\| \bm{y}_j \|_2^2 < 0, \\
 \noalign{\smallskip}
 \displaystyle \frac{\mu^2\bm{x}^k - t\mu \bm{y}_j}{t^2\|\bm{y}_j\|_2^2-\mu^2} & \text{otherwise}.
 \end{array}
 \right.
\end{equation}
\end{proposition}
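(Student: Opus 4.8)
The plan is to exploit the fact that the subproblem \eqref{subproblem_sppa} is a strongly convex program with a single affine constraint, so that its (unique) minimizer is completely characterized by the KKT conditions, and to identify this minimizer via a three-way split on the sign of $\bm{y}_j^\top(\bm{x}^k+\bm{d})$ at optimality. Writing $a=\bm{y}_j^\top\bm{x}^k$ (so that $\mu=ta$), I would first record the first-order optimality condition: a point $\bm{d}$ solves \eqref{subproblem_sppa} if and only if there exist a subgradient $s\in\partial|\cdot|\big(\bm{y}_j^\top(\bm{x}^k+\bm{d})\big)$ and a multiplier $\lambda\in\R$ with
\[
s\,\bm{y}_j+\tfrac{1}{t}\bm{d}+\lambda\bm{x}^k=\bm{0},\qquad \bm{d}^\top\bm{x}^k=0,
\]
where $\partial|\cdot|(r)=\{\sgn(r)\}$ for $r\neq0$ and $\partial|\cdot|(0)=[-1,1]$. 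Strong convexity together with the affine constraint makes these conditions both necessary and sufficient and pins down a unique solution.

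Next I would eliminate the multiplier. Solving the stationarity equation gives $\bm{d}=-t(s\bm{y}_j+\lambda\bm{x}^k)$, and substituting into $\bm{d}^\top\bm{x}^k=0$ (using $\|\bm{x}^k\|_2=1$ and $\bm{y}_j^\top\bm{x}^k=a$) yields $\lambda=-sa$, hence the compact expression $\bm{d}=s\,(\mu\bm{x}^k-t\bm{y}_j)$. Taking $s=+1$ and $s=-1$ immediately reproduces the first two candidate solutions in \eqref{opt-d}. For the kink regime $\bm{y}_j^\top(\bm{x}^k+\bm{d})=0$ I would instead treat $s\in(-1,1)$ as an extra unknown and solve the two scalar equations $\bm{d}^\top\bm{x}^k=0$ and $\bm{y}_j^\top(\bm{x}^k+\bm{d})=0$ simultaneously; this forces $s=\mu/(t^2\|\bm{y}_j\|_2^2-\mu^2)$ and, after substitution, produces the third candidate in \eqref{opt-d}. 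The only mildly delicate bookkeeping in this step is the conversion between the $a$- and $\mu$-parametrizations via $a=\mu/t$.

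It then remains to attach each candidate to its validity region and to verify consistency. For $s=+1$, validity of the subgradient requires $\bm{y}_j^\top(\bm{x}^k+\bm{d})\ge0$; evaluating the left side at $\bm{d}=\mu\bm{x}^k-t\bm{y}_j$ gives exactly $(1+\mu)\mu/t-t\|\bm{y}_j\|_2^2$, so the first condition in \eqref{opt-d} is precisely the consistency condition for $s=+1$, and symmetrically for $s=-1$. For the kink candidate, validity means $|s|\le1$, which I would show is equivalent to $\mu^2+|\mu|\le t^2\|\bm{y}_j\|_2^2$—exactly the negation of the first two conditions, i.e.\ the ``otherwise'' branch.

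Finally, I would establish exhaustiveness and mutual exclusivity of the three regimes, which is the main substantive point beyond routine KKT algebra. Setting $\phi_+=(1+\mu)\mu/t-t\|\bm{y}_j\|_2^2$ and $\phi_-=(1-\mu)\mu/t+t\|\bm{y}_j\|_2^2$, a direct computation gives $\phi_+-\phi_-=\tfrac{2}{t}\big(\mu^2-t^2\|\bm{y}_j\|_2^2\big)\le0$, using the Cauchy--Schwarz bound $\mu^2=t^2(\bm{y}_j^\top\bm{x}^k)^2\le t^2\|\bm{y}_j\|_2^2$ (here $\|\bm{x}^k\|_2=1$). Hence $\phi_+\le\phi_-$, so the events $\phi_+>0$ (case~1) and $\phi_-<0$ (case~2) cannot hold simultaneously, and their complement is exactly the kink region $\phi_+\le0\le\phi_-$. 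The same bound shows the case-3 denominator $t^2\|\bm{y}_j\|_2^2-\mu^2$ is nonnegative, and strictly positive except when $\bm{y}_j\parallel\bm{x}^k$, which (whenever the kink region is entered) forces $\bm{y}_j=\bm{0}$ and hence the trivial solution $\bm{d}=\bm{0}$. Combining this partition with uniqueness from strong convexity then yields \eqref{opt-d}.
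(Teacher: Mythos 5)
Your proposal is correct and takes essentially the same route as the paper: both write down the KKT conditions of the strongly convex subproblem \eqref{subproblem_sppa}, eliminate the multiplier using $\bm{d}^\top\bm{x}^k=0$ and $\|\bm{x}^k\|_2=1$, and split on the sign of $\bm{y}_j^\top(\bm{x}^k+\bm{d})$ to obtain the three candidates and their validity regions. In fact you go somewhat beyond the paper's proof (which details only the first case and dismisses the other two as ``similar''): your verification that $\phi_+\le\phi_-$ via Cauchy--Schwarz, so the three regimes are mutually exclusive and exhaustive, and your treatment of the degenerate denominator $t^2\|\bm{y}_j\|_2^2=\mu^2$ (which forces $\bm{y}_j=\bm{0}$ and $\bm{d}^k=\bm{0}$ inside the kink region) are welcome additions that the paper leaves implicit.
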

\begin{proof}
The first-order optimality conditions of \eqref{subproblem_sppa} are 
\begin{subequations} \label{eq:kkt-sppa}
\begin{align}
\label{subproblem_sppa-kkt-1}	\bm{0} &\in \frac{1}{t}\bm{d} + \partial \left| \bm{y}_j^\top (\bm{x}^k+\bm{d}) \right| \bm{y}_j - \lambda \bm{x}^k,\\
\label{subproblem_sppa-kkt-2}	0 &=\bm{d}^\top \bm{x}^k.
\end{align}
\end{subequations}
Suppose that $\bm{d}$ is a solution to~\eqref{eq:kkt-sppa}. If $\bm{y}_j^\top (\bm{x}^k+\bm{d})>0$, then $\partial \left| \bm{y}_j^\top (\bm{x}^k+\bm{d}) \right| = 1$, and \eqref{subproblem_sppa-kkt-1} implies that $\bm{0} = \bm{d}/t + \bm{y}_j - \lambda \bm{x}^k$. This, together with \eqref{subproblem_sppa-kkt-2} and the fact that $\|\bm{x}^k\|_2=1$, gives $\lambda=  \bm{y}_j^\top \bm{x}^k$. It follows that $\bm{d}=t (\bm{y}_j^\top \bm{x}^k) \bm{x}^k - t\bm{y}_j = \mu \bm{x}^k - t\bm{y}_j$ and hence $\bm{y}_j^\top (\bm{x}^k+\bm{d})>0$ is equivalent to $(1+\mu)\mu/t - t\|\bm{y}_j\|_2^2>0$. This establishes the first case in \eqref{opt-d}. The other two cases in \eqref{opt-d}, which correspond to $\bm{y}_j^\top (\bm{x}^k+\bm{d})<0$ and $\bm{y}_j^\top (\bm{x}^k+\bm{d})=0$, can be derived using a similar argument. 
\end{proof}

We now present the details of StManPPA in Algorithm~\ref{alg:stmanppa}. It is worth noting that our proposed StManPPA is different from the ones developed in the recent work~\cite{Wang-Ma-Xue-2020}. Indeed, in each iteration, the former only needs to solve a subproblem that involves a single component of the objective function, while the latter need to compute the proximal mapping of the entire objective function.
\begin{algorithm}
	\caption{StManPPA for Solving Problem~\eqref{DPCP}}
	\label{alg:stmanppa}
	\begin{algorithmic}[1]
		\STATE Input: $\bm{x}^0 \in \M$, $t_0,t_1,\ldots,t_T>0$.
		\FOR{$k=0,1,\dots,T$}
		\STATE Select $j_k\in\{1,\ldots,p\}$ uniformly at random and solve the subproblem~\eqref{subproblem_sppa} with $j=j_k$, $t=t_k$ to obtain $\bm{d}^k$.
		\STATE Set $\bm{x}^{k+1} = \Proj_{\M}(\bm{x}^k + \bm{d}^k)$.
		\ENDFOR
        \STATE Output: $\bar{\bm{x}}=\bm{x}^k$ with probability $t_k/\sum_{k=0}^T t_k$.
	\end{algorithmic}
\end{algorithm}

\subsection{Convergence Analysis of StManPPA}

In this section, we present our convergence results for StManPPA. Let us begin with some preparations. Define $f_j:\R^n\rightarrow\R$ to be the function $f_j(\bm{x}) = \left| \bm{y}_j^\top\bm{x} \right|$ and let $L_j>0$ denote the Lipschitz constant of $f_j$, where $j=1,\ldots,p$. Set $\bar{L} := \max_{j\in\{1,\ldots,p\}} L_j$. Furthermore, define the \emph{Moreau envelope} and \emph{proximal mapping} on $\M$ by
\begin{subequations}\label{surrogate_measure}
\begin{align}
e_f(\bm{z}) &= \min_{\bm{x}\in\M} \ f(\bm{x}) + \frac{1}{2}\| \bm{x} - \bm{z} \|_2^2, \label{eq:moreau} \\
\mprox_{f}(\bm{z}) &\in \argmin_{\bm{x}\in\M} \ f(\bm{x}) + \frac{1}{2}\| \bm{x} - \bm{z} \|_2^2,
\end{align}
\end{subequations}
respectively. The proximal mapping $\mprox$ is well defined since the constraint set $\M$ is compact. As it turns out, the proximal mapping $\mprox$ can be used to define an alternative notion of stationarity for problem~\eqref{DPCP}. Indeed, for any $\lambda>0$ and $\bm{x}\in\M$, if we denote $\hat{\bm x} = \mprox_{\lambda f}(\bm{x})$, then the optimality condition of~\eqref{surrogate_measure} yields
\[
\bm{0} \in \partial_R f(\hat{\bm x}) + \frac{1}{\lambda}(\bm{I}_n-\hat{\bm x} \hat{\bm x}^{\top}) (\hat{\bm x} -\bm{x}).
\]
Since $\bm{I}_n-\hat{\bm x} \hat{\bm x}^{\top}$ is a projection operator and hence nonexpansive, we obtain
\[ \dist(\bm{0},\partial_R f(\hat{\bm x})) \leq \frac{1}{\lambda} \| \bm{x} - \hat{\bm x} \|_2. \]
In particular, if $\tfrac{1}{\lambda}\| \bm{x} - \hat{\bm x} \|_2 \leq \epsilon$, then (i) $\hat{\bm x}$ is $\epsilon$-stationary in the sense that $\dist(\bm{0},\partial_R f(\hat{\bm x})) \le \epsilon$ and (ii) $\bm{x}$ is close to the $\epsilon$-stationary point $\hat{\bm x}$. This motivates us to use 
\[ \M \ni \bm{x} \mapsto \Theta_\lambda(\bm{x}) := \frac{1}{\lambda} \|\bm{x} - \mprox_{\lambda f}(\bm{x})\|_2 \]
as a stationarity measure for problem~\eqref{DPCP}. We call $\bm{x}\in\M$ an \emph{$\epsilon$-nearly stationary point} of problem~\eqref{DPCP} if $\Theta_\lambda(\bm{x}) \le \epsilon$. It is worth noting that such a notion has also been used in~\cite{li2019nonsmooth} to study the stochastic RSGM.

We are now ready to establish the convergence rate of StManPPA, which constitutes the third main contribution of this paper. 
\begin{theorem}\label{prop:stochastic subgradient global rate}
For any $\lambda\in(0,1/(p\bar{L}))$, the point $\bar{\bm x}$ output by Algorithm~\ref{alg:stmanppa} satisfies 
\[ 
   \mathbb{E} \left[ \Theta_\lambda(\bar{\bm x})^2 \right]  \leq \frac{ 2\lambda e_{\lambda}( \bm{x}^0 ) + \bar{L}^2 \sum_{k=0}^T t_k^2} { \lambda ( (1/p) - \lambda\bar{L}) \sum_{k=0}^T t_k},
\] %
where the expectation is taken over all random choices made by the algorithm. In particular, if the step sizes $\{t_k\}_k$ satisfy $\sum_{k=0}^\infty t_k=\infty$ and $\sum_{k=0}^\infty t_k^2< \infty$, then $\mathbb{E}\left[ \Theta_\lambda(\bar{\bm x})^2 \right] \rightarrow 0$. Moreover, if we take $t_k = \tfrac{1}{\sqrt{T+1}}$ for $k=0,1,\ldots,T$, then the number of iterations needed by StManPPA to obtain a point $\bar{\bm x}\in\M$ satisfying $\mathbb{E}[\Theta_{\lambda}(\bar{\bm x})] \le \epsilon$ is $\mathcal{O}(\epsilon^{-4})$.
\end{theorem}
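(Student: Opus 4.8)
The plan is to use the Moreau envelope $e_{\lambda f}(\bm{z}) := \min_{\bm{x}\in\M} \lambda f(\bm{x}) + \tfrac{1}{2}\|\bm{x}-\bm{z}\|_2^2 = \lambda e_\lambda(\bm{z})$ as a Lyapunov function and to derive a one-step recursion of the form $\mathbb{E}_k[e_{\lambda f}(\bm{x}^{k+1})] \le e_{\lambda f}(\bm{x}^k) - \tfrac{\lambda}{2}\!\left(\tfrac1p - \bar{L}\lambda\right) t_k\,\Theta_\lambda(\bm{x}^k)^2 + \tfrac{1}{2}\bar{L}^2 t_k^2$, where $\mathbb{E}_k$ is the conditional expectation given $\bm{x}^k$. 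Writing $\hat{\bm{x}}^k := \mprox_{\lambda f}(\bm{x}^k)$ for the proximal point of the current iterate, I will use throughout that $\|\bm{x}^k-\hat{\bm{x}}^k\|_2 = \lambda\,\Theta_\lambda(\bm{x}^k)$ and, by comparing the proximal objective value at $\hat{\bm{x}}^k$ with that at the feasible point $\bm{x}^k$, that $f(\hat{\bm{x}}^k) - f(\bm{x}^k) \le -\tfrac{1}{2\lambda}\|\hat{\bm{x}}^k-\bm{x}^k\|_2^2$. The whole argument then reduces to controlling the change of $e_{\lambda f}$ along one StManPPA step.

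To upper bound $e_{\lambda f}(\bm{x}^{k+1})$ I would first record a one-sided nonexpansiveness of the spherical projection: for any $\bm{z}$ with $\|\bm{z}\|_2\ge 1$ and any $\bm{y}\in\M$, a short computation gives $\|\bm{z}-\bm{y}\|_2^2 - \|\Proj_\M(\bm{z})-\bm{y}\|_2^2 \ge (\|\bm{z}\|_2-1)^2 \ge 0$, hence $\|\Proj_\M(\bm{z})-\bm{y}\|_2 \le \|\bm{z}-\bm{y}\|_2$. The tangency constraint in~\eqref{subproblem_sppa} forces $\bm{d}^k\perp\bm{x}^k$, so $\bm{z} = \bm{x}^k+\bm{d}^k$ satisfies $\|\bm{z}\|_2^2 = 1+\|\bm{d}^k\|_2^2\ge 1$ and the fact applies with $\bm{y}=\hat{\bm{x}}^k$. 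Using the definition of $e_{\lambda f}$ with the competitor $\hat{\bm{x}}^k$ followed by this nonexpansiveness gives $e_{\lambda f}(\bm{x}^{k+1}) \le \lambda f(\hat{\bm{x}}^k) + \tfrac12\|\hat{\bm{x}}^k - \bm{x}^k - \bm{d}^k\|_2^2$; expanding the square and recognizing $\lambda f(\hat{\bm{x}}^k) + \tfrac12\|\hat{\bm{x}}^k-\bm{x}^k\|_2^2 = e_{\lambda f}(\bm{x}^k)$ yields $e_{\lambda f}(\bm{x}^{k+1}) \le e_{\lambda f}(\bm{x}^k) - \langle \bm{d}^k,\, \hat{\bm{x}}^k-\bm{x}^k\rangle + \tfrac12\|\bm{d}^k\|_2^2$.

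Next I would evaluate the cross term via the optimality conditions~\eqref{eq:kkt-sppa}, which give $\bm{d}^k = t_k(\lambda_k\bm{x}^k - \bm{g}_k)$ for some $\bm{g}_k\in\partial f_{j_k}(\bm{x}^k+\bm{d}^k)$ and $\lambda_k = (\bm{x}^k)^\top\bm{g}_k$, so that $|\lambda_k|\le \|\bm{g}_k\|_2 \le L_{j_k}\le \bar{L}$ and $t_k\langle\bm{g}_k,\bm{d}^k\rangle = -\|\bm{d}^k\|_2^2$. Substituting $-\bm{d}^k = t_k(\bm{g}_k-\lambda_k\bm{x}^k)$ and using $\langle\bm{x}^k,\hat{\bm{x}}^k-\bm{x}^k\rangle = -\tfrac12\|\hat{\bm{x}}^k-\bm{x}^k\|_2^2$ (both points lie on $\M$) splits the cross term into $t_k\langle\bm{g}_k,\hat{\bm{x}}^k-\bm{x}^k\rangle$ plus the Euclidean-versus-Riemannian correction $\tfrac{t_k\lambda_k}{2}\|\hat{\bm{x}}^k-\bm{x}^k\|_2^2 \le \tfrac{t_k\bar{L}}{2}\|\hat{\bm{x}}^k-\bm{x}^k\|_2^2$. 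For the first piece I apply the subgradient inequality for the convex $f_{j_k}$ at $\bm{x}^k+\bm{d}^k$, which together with $t_k\langle\bm{g}_k,\bm{d}^k\rangle = -\|\bm{d}^k\|_2^2$ gives $t_k\langle\bm{g}_k,\hat{\bm{x}}^k-\bm{x}^k\rangle \le t_k[f_{j_k}(\hat{\bm{x}}^k) - f_{j_k}(\bm{x}^k+\bm{d}^k)] - \|\bm{d}^k\|_2^2$; I then Lipschitz-bound $-f_{j_k}(\bm{x}^k+\bm{d}^k)\le -f_{j_k}(\bm{x}^k) + L_{j_k}\|\bm{d}^k\|_2$ and complete the square, $t_kL_{j_k}\|\bm{d}^k\|_2 - \tfrac12\|\bm{d}^k\|_2^2 \le \tfrac12 t_k^2\bar{L}^2$, which is exactly the absorption producing the clean constant $\bar{L}^2$. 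Taking $\mathbb{E}_k$ and using $\mathbb{E}_k[f_{j_k}(\bm{w})] = \tfrac1p f(\bm{w})$ for the (given $\bm{x}^k$, deterministic) points $\bm{w}\in\{\hat{\bm{x}}^k,\bm{x}^k\}$, then invoking $f(\hat{\bm{x}}^k)-f(\bm{x}^k)\le -\tfrac1{2\lambda}\|\hat{\bm{x}}^k-\bm{x}^k\|_2^2$ and $\|\hat{\bm{x}}^k-\bm{x}^k\|_2^2 = \lambda^2\Theta_\lambda(\bm{x}^k)^2$, the two resulting $\Theta^2$ contributions combine into $-\tfrac{t_k\lambda}{2}\!\left(\tfrac1p-\bar{L}\lambda\right)\Theta_\lambda(\bm{x}^k)^2$; demanding a negative coefficient is precisely the hypothesis $\lambda\in(0,1/(p\bar{L}))$, and this establishes the target recursion.

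Finally, I take total expectation, sum over $k=0,\dots,T$, telescope $e_{\lambda f}$, and drop the nonnegative terminal term $e_{\lambda f}(\bm{x}^{T+1})\ge 0$ to get $\tfrac{\lambda}{2}\!\left(\tfrac1p-\bar{L}\lambda\right)\sum_k t_k\,\mathbb{E}[\Theta_\lambda(\bm{x}^k)^2] \le e_{\lambda f}(\bm{x}^0) + \tfrac12\bar{L}^2\sum_k t_k^2$. The sampling rule for $\bar{\bm{x}}$ gives $\mathbb{E}[\Theta_\lambda(\bar{\bm{x}})^2] = \big(\sum_k t_k\,\mathbb{E}[\Theta_\lambda(\bm{x}^k)^2]\big)/\big(\sum_k t_k\big)$, and dividing yields the stated bound after substituting $e_{\lambda f}(\bm{x}^0)=\lambda e_\lambda(\bm{x}^0)$. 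The two consequences are then immediate: if $\sum t_k=\infty$ and $\sum t_k^2<\infty$, the numerator stays bounded while the denominator diverges, so the bound tends to $0$; and for $t_k=1/\sqrt{T+1}$ one has $\sum_{k=0}^T t_k=\sqrt{T+1}$ and $\sum_{k=0}^T t_k^2=1$, giving $\mathbb{E}[\Theta_\lambda(\bar{\bm{x}})^2]=\mathcal{O}(T^{-1/2})$ and hence, by Jensen's inequality $\mathbb{E}[\Theta_\lambda(\bar{\bm{x}})]\le\sqrt{\mathbb{E}[\Theta_\lambda(\bar{\bm{x}})^2]}$, $\mathbb{E}[\Theta_\lambda(\bar{\bm{x}})] = \mathcal{O}(T^{-1/4})$, i.e. $\mathcal{O}(\epsilon^{-4})$ iterations to reach accuracy $\epsilon$. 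I expect the main obstacle to be the manifold geometry: proving the one-sided nonexpansiveness of the spherical projection and noticing that the tangency constraint guarantees $\|\bm{x}^k+\bm{d}^k\|_2\ge 1$ so it applies, together with cleanly bookkeeping the correction term $\tfrac{t_k\lambda_k}{2}\|\hat{\bm{x}}^k-\bm{x}^k\|_2^2$ and the completing-the-square absorption, so that the constants match exactly and the threshold $1/(p\bar{L})$ emerges.
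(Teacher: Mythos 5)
Your proposal is correct and follows essentially the same route as the paper's proof: the Moreau envelope on $\M$ as a Lyapunov function, the competitor $\hat{\bm x}^k=\mprox_{\lambda f}(\bm{x}^k)$ combined with nonexpansiveness of the spherical projection (the paper's Proposition~\ref{polar-nonexpensive}), the subproblem optimality condition $\bm{d}^k=-t_k(\bm{I}_n-\bm{x}^k\bm{x}^{k\top})\bm{g}_k$, and the same telescoping/output-sampling argument. The only differences are cosmetic: you inline the sphere-correction estimate that the paper packages as its Riemannian subgradient inequality (Proposition~\ref{weakly-inequality}), and you absorb the term $t_kL_{j_k}\|\bm{d}^k\|_2-\tfrac12\|\bm{d}^k\|_2^2$ by completing the square pointwise rather than after taking expectations as in~\eqref{eq:d-bd1}, yielding identical constants.
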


The proof of Theorem~\ref{prop:stochastic subgradient global rate} can be found in Appendix~\ref{app:stmanppa}. {Again, it makes crucial use of our newly established Riemannian subgradient inequality (see Appendix~\ref{app:conv-anal-local}, Proposition \ref{weakly-inequality}), which allows StManPPA to be analyzed in a similar way as various Euclidean stochastic methods.} We remark that the iteration complexity bound $\mathcal{O}(\epsilon^{-4})$ of StManPPA established in Theorem~\ref{prop:stochastic subgradient global rate} is comparable to that of RSGM established in~\cite{li2019nonsmooth}. 

\section{Extension to Stiefel Manifold Constraint}\label{sec:sequent}
In this section, we consider the matrix analog~\eqref{DPCP-matrix} of problem~\eqref{DPCP}, which also arises in many applications such as certain ``one-shot'' formulations of the ODL and RSR problems (see Section~\ref{subsec:contrib}). Currently, there are two existing approaches for solving problem~\eqref{DPCP-matrix}, namely a sequential linear programming (SLP) approach and an iteratively reweighted least squares (IRLS) approach~\cite{Tsakiris-Vidal-2018}. In the SLP approach, the columns of $\bm{X}$ are extracted one at a time. Suppose that we have already obtained the first $\ell$ columns of $\bm{X}$ ($\ell=0,1,\ldots,q-1$) and arrange them in the matrix $\bm{X}_\ell \in \R^{n\times\ell}$ (with $\bm{X}_0=\bm{0}$). Then, the $(\ell+1)$-st column of $\bm{X}$ is obtained by solving 
\[ 
\min_{\bm{x}\in\R^n} \ \|\bm{Y}^\top\bm{x}\|_1\quad \st\quad\|\bm{x}\|_2=1, \ \bm{X}_\ell^\top\bm{x} = \bm{0}.
\]
This is achieved by the alternating linearization and projection (ALP) method, which generates the iterates via
\begin{align*}
\bm{z}^k &= \argmin_{\bm{z} \in \R^n} \ \|\bm{Y}^\top\bm{z}\|_1 \,\,\, \st \,\,\, \bm{z}^\top \bm{x}^{k-1} = 1,\, \bm{X}_\ell^\top\bm{z} = \bm{0}, \\ 
\bm{x}^k &= \bm{z}^k/\|\bm{z}^k\|_2.
\end{align*}
Note that the $\bm{z}$-subproblem is a linear program, which can be efficiently solved by off-the-shelf solvers. 

In the IRLS approach, the following variant of problem \eqref{DPCP-matrix}, which favors row-wise sparsity of $\bm{Y}^\top\bm{X}$ and has also been studied by Lerman\etal in \cite{Lerman-2015}, is considered:
\be\label{IRLS-prob}
\min_{\bm{X}\in\R^{n\times q}} \ \|\bm{Y}^\top \bm{X}\|_{1,2} \quad\st\quad \bm{X}^\top\bm{X} = \bm{I}_{q}.
\ee
Here, $\|\bm{Y}^\top \bm{X}\|_{1,2}$ denotes the sum of the Euclidean norms of the rows of $\bm{Y}^\top\bm{X}$. The IRLS method for solving~\eqref{IRLS-prob} generates the iterates via
\[ 
\bm{X}^k = \argmin_{\bm{X}\in\R^{n\times q}} \ \sum_{j=1}^p w_{j,k}\|\bm{X}^\top \bm{y}_j\|_2^2\quad \st\quad \bm{X}^\top\bm{X} = \bm{I}_q, 
\] 
where $w_{j,k} = 1/\max\{\delta,\|\bm{X}^{k-1 \top}\bm{y}_j\|_2\}$ and $\delta>0$ is a perturbation parameter to prevent the denominator from being 0. The solution $\bm{X}^k$ can be obtained via an SVD and is thus easy to implement. However, there has been no convergence guarantee for the IRLS method so far.

Recently, Wang\etal\cite{WLS19} proposed a proximal alternating maximization method for solving a maximization version of~\eqref{DPCP-matrix}, which arises in the so-called \emph{$\ell_1$-PCA} problem (see~\cite{Lerman-PIEEE-survey}). However, the method cannot be easily adapted to solve problem~\eqref{DPCP-matrix}.

The similarity between problems~\eqref{DPCP} and~\eqref{DPCP-matrix} suggests that the latter can also be solved by ManPPA, which is indeed the case. However, the SSN method for solving the resulting nonsmooth equation (i.e., the matrix analog of \eqref{nonsmooth-equation}) can be slow, as the dimension of the linear system \eqref{ssn} is high. Here, we propose an alternative method called \emph{sequential ManPPA}, which solves \eqref{DPCP-matrix} in a column-by-column manner and constitutes the fourth main contribution of this paper. The method is based on the observation that the objective function in \eqref{DPCP-matrix} is separable in the columns of $\bm{X}=[\bm{x}_1,\bm{x}_2,\ldots,\bm{x}_n]$. To find $\bm{x}_1$, we simply solve
\[
\min_{\bm{x}_1 \in \R^n} \ \|\bm{Y}^\top \bm{x}_1\|_1 \quad \st \quad \|\bm{x}_1\|_2=1
\]
using ManPPA as it is an instance of problem~\eqref{DPCP}. Now, suppose that we have found the first $\ell$ columns of $\bm{X}$ ($\ell=0,1,\ldots,q-1$) and arrange them in the matrix $\bm{Q}_\ell \in \R^{n\times\ell}$ (with $\bm{Q}_0=\bm{0}$). Then, we find the $(\ell+1)$-st column $\bm{x}_{\ell+1}$ by solving
\be\label{general-vector}
\min_{\bm{x}_{\ell+1}\in\R^n} \|\bm{Y}^\top\bm{x}_{\ell+1}\|_1 \,\,\, \st \,\,\, \|\bm{x}_{\ell+1}\|_2 = 1, \ \bm{Q}_{\ell}^\top \bm{x}_{\ell+1} = \bm{0}.
\ee
As it turns out, problem \eqref{general-vector} is equivalent to the deflation process discussed in \cite{Sun-CDR-part2-2017} for sequentially recovering the columns of an orthogonal dictionary. Specifically, let $\bm{V}_\ell$ be an orthonormal basis of the orthogonal complement of $\bm{Q}_{\ell}$. We can then find $\bm{x}_{\ell+1}$ by solving
	\be\label{prob:deflation}
	\min_{\bm{q} \in \R^{n-\ell}}\| \bm{Y}^\top \bm{V}_\ell \bm{q}\|_1 \quad \st \quad \| \bm{q}\|_2 = 1.
	\ee
Note that \eqref{prob:deflation} has the same form as \eqref{DPCP} and thus can be solved by RSGM or PSGM. By contrast, problem~\eqref{general-vector} has an extra linear constraint and cannot be solved by RSGM or PSGM directly. Nevertheless, our ManPPA can solve both \eqref{general-vector} and \eqref{prob:deflation}. Let us now briefly discuss how to use ManPPA to solve the former. To simplify notation, let us drop the index $\ell$ and denote $\bm{x}=\bm{x}_{\ell+1}$, $\bm{Q}=\bm{Q}_{\ell}$. In the $k$-th iteration of ManPPA, we update the iterate by \eqref{ManPPA-dpcp-retraction}, where the search direction $\bm{d}^k$ is computed by

\[ 
\begin{array}{cl}
\displaystyle\min_{\bm{d}\in\R^n, \atop \bm{u}\in\R^p} & \displaystyle\frac{1}{2}\|\bm{d}\|_2^2+t\|\bm{u}\|_1 \\
\st & \bm{Y}^\top(\bm{x}^k+\bm{d}) = \bm{u},\, \bm{d}^\top [\bm{Q}, \bm{x}^k] = \bm{0}.
\end{array}
\] 
This subproblem can be solved using the SSN-based inexact ALM framework in Section \ref{sec:ManPPA}. We omit the details for succinctness. The sequential ManPPA is guaranteed to find a feasible solution to problem~\eqref{DPCP-matrix}. Moreover, as we shall see in Section~\ref{sec:numerical}, it is computationally much more efficient than the ALP and IRLS methods on the ODL problem and the DPCP formulation of the RSR problem. However, it remains open whether the solution found by sequential ManPPA is a stationary point of problem~\eqref{DPCP-matrix}. We leave this question for future research.

\section{Numerical Experiments}\label{sec:numerical}

In this section, we compare our proposed ManPPA, StManPPA, and sequential ManPPA with the existing methods PSGM-MBLS, ALP, and IRLS. We do not include RSGM with diminishing step sizes in our comparison, as the numerical results in \cite{zhu2018dualpcp} show that they are slower than PSGM-MBLS and cannot achieve high accuracy. In all the tests, we used the step size $t=0.1$ for ManPPA and set the maximum number of iterations of ManPPA, inexact ALM, and SSN to 100, 30, and 20, respectively. We stopped ManPPA if the relative change in the objective values satisfies $|f(\bm{x}^k) - f(\bm{x}^{k-1})| / f(\bm{x}^{k-1}) \leq 10^{-9}$. In the $k$-th iteration of ManPPA, we stopped the inexact ALM if the primal and dual feasibility of problem \eqref{ManPPA-dpcp-sub-rewrite} satisfy
\[
\max\Big\{ \sqrt{ \| \bm{Y}^\top \bm{d}^{j+1}+\bm{c} - \bm{u}^{j+1} \|_2^2 + ((\bm{d}^{j+1})^\top \bm{x})^2 }, \| \nabla \psi_{j}(\bm{d}^{j+1}) \|_2 \Big\} \leq \epsilon_k = 0.1^k.
\]
For SSN, we used the same termination criteria as the ones given in \cite{Sun-lasso-2018} and solved the linear equation \eqref{eq:ssn-a} by Cholesky decomposition. We refer the reader to the companion technical report~\cite{chen2020manifold} for the setting of the parameters in the inexact ALM and SSN.

For StManPPA, we used the piecewise geometrically diminishing step sizes $t_k=\beta^{\lfloor k/p \rfloor}t_0$ for $k=0,1,\ldots,pT$ with $t_0=0.6$ and $T=500$. Such step sizes are motivated by those used in PSGM~\cite{zhu2018dualpcp}. We use StManPPA-$\beta$ to specify the parameter $\beta$ used in the algorithm. We stopped the algorithm if the relative change in the objective values satisfies $|f(\bm{x}^k) - f(\bm{x}^{k-1})| / f(\bm{x}^{k-1}) \leq 10^{-12}$.
For PSGM-MBLS, ALP, and IRLS, we used their default settings of the parameters. We stopped ALP if the change in the objective values satisfies
$|f(\bm{x}^k) - f(\bm{x}^{k-1})| \leq 10^{-6}$, while we stopped IRLS if the change in the objective values satisfies $|f(\bm{x}^k) - f(\bm{x}^{k-1})| \leq 10^{-11}$. With these stopping criteria, the solutions returned by these algorithms usually achieve the same accuracy.

\subsection{DPCP Formulations of the RSR Problem}

In this section, we consider the DPCP formulations of the RSR problem. For the recovery of a vector that is orthogonal to the inlier subspace (the \emph{vector} case), we compared the performance of ManPPA, StManPPA, PSGM-MBLS, ALP, and IRLS on problem~\eqref{DPCP}. For the recovery of the entire inlier subspace of known dimension $d$ (the \emph{matrix} case), we compared the performance of sequential ManPPA, PSGM-MBLS, ALP, and IRLS on problem~\eqref{DPCP-matrix} with $q=n-d$.

\subsubsection{Synthetic Data}

We first test the algorithms on synthetic data. The data matrix takes the form $\bm{Y} = [\bm{X}, \bm{O}]\in\br^{n\times (p_1+p_2)}$. We generated the inlier data $\bm{X}$ as $\bm{X} = \bm{Q}\bm{C}$, where $\bm{Q}\in\R^{n\times d}$ is an orthonormal matrix and $\bm{C}\in\R^{d\times p_1}$ is a coefficient matrix. The matrix $\bm{Q}$ was generated by orthonormalizing an $n\times d$ standard Gaussian random matrix via QR decomposition, while the matrix $\bm{C}$ was generated as a $d\times p_1$ standard Gaussian random matrix. The outlier data $\bm{O} \in\R^{n\times p_2}$ was generated as a standard Gaussian random matrix. Finally, the columns of the matrix $\bm{Y}$ were normalized. The $d$-dimensional subspace spanned by $\bm{X}$ is denoted by $\CS$ and its orthogonal complement is denoted by $\CS^\perp$.

{\bf Vector case.} We set the initial point $\bm{x}^0$ of all algorithms to be the eigenvector of $\bm{Y}\bm{Y}^\top$ corresponding to the eigenvalue with minimum magnitude. We compared the performance of the algorithms on problem~\eqref{DPCP} with different dimension $n$, number of inliers $p_1$, and number of outliers $p_2$ in Figure \ref{fig:DPCP-ManPPA-LP-IRLS-Sub}. The first row of Figure \ref{fig:DPCP-ManPPA-LP-IRLS-Sub} reports the principal angle\footnote{The principal angle is the distance between $\bm{x}^k$ and $\CS^\perp$. Any optimal solution $\bm{x}^*$ to problem \eqref{DPCP} is orthogonal to the inlier subspace $\CS$ \cite[Theorem 1]{zhu2018dualpcp}. Using the Lipschitz continuity of  $f$, we know that $\theta$ also measures the function value gap $f(\bm{x})-f(\bm{x}^*)$.}
$\theta$ between $\bm{x}^k$ and $\CS^{\perp}$ versus the iteration number. The second row reports $\theta$ versus CPU time. Note that $\bm{x}^k = \sin(\theta)\bm{n}+ \cos(\theta)\bm{s}$, where $\bm{n}=\Proj_{\CS}(\bm{x}^k)/\normtwo{\Proj_{\CS}(\bm{x}^k)}_2$ and $\bm{s}=\Proj_{\CS^{\perp}}(\bm{x}^k)/\normtwo{\Proj_{\CS^{\perp}}(\bm{x}^k)}_2$. From Figure \ref{fig:DPCP-ManPPA-LP-IRLS-Sub}, we see that PSGM-MBLS is the fastest, while ManPPA is slightly slower. However, they are both much faster than other compared methods. Moreover, the principal angle $\theta$ of ManPPA decreases much faster than PSGM-MBLS in terms of iteration number. This can be attributed to the quadratic convergence rate of ManPPA (Theorem \ref{thm:local_rate}).

\begin{figure}[htb]
	\begin{center}
		\subfigure[$n=30,p_1=500,p_2=1167$]
{\includegraphics[width=0.4\linewidth]{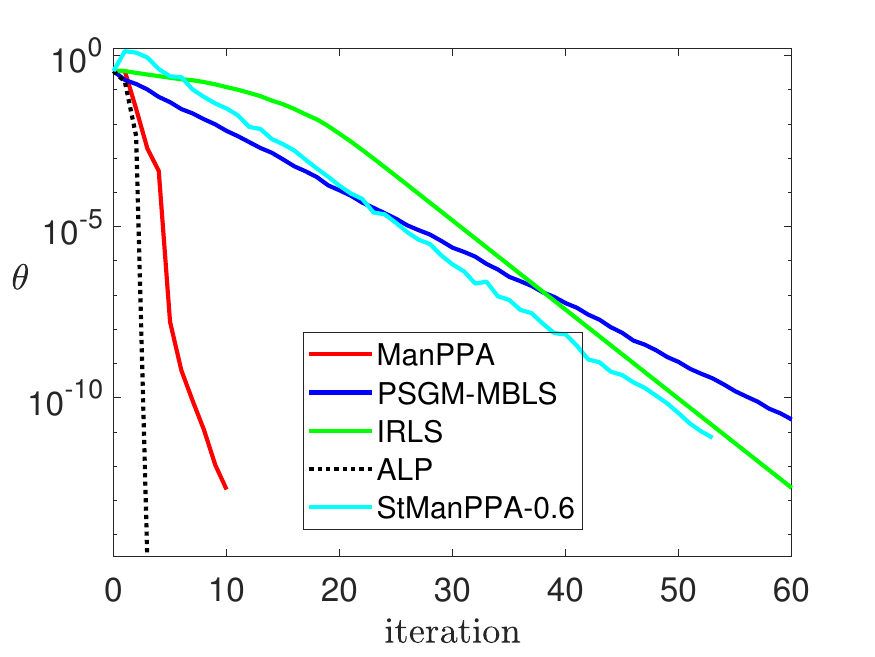}}
        		\subfigure[$n=30,p_1=500,p_2=1167$]{\includegraphics[width=0.4\linewidth]{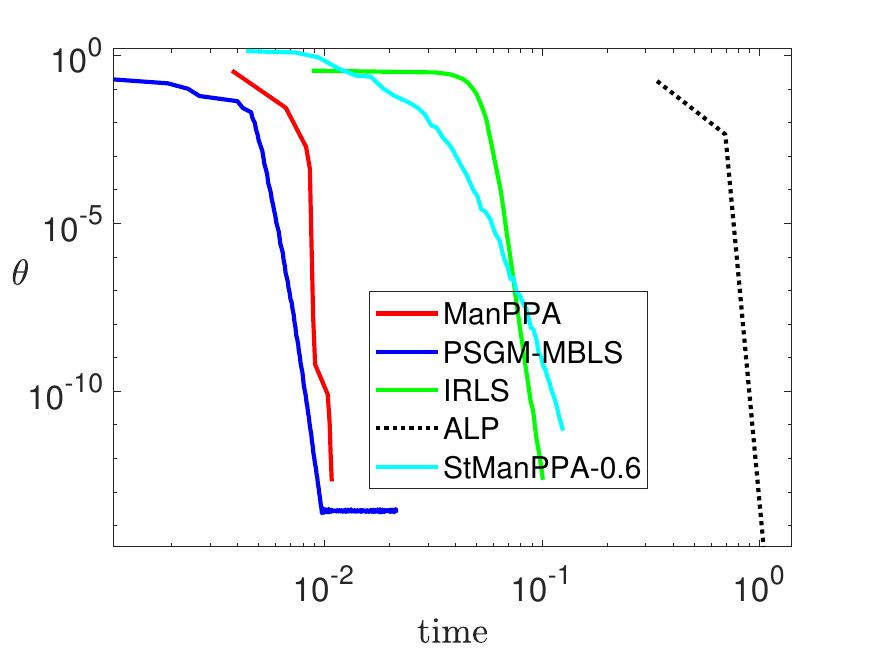}}
		\caption{Numerical results for the DPCP formulation~\eqref{DPCP}. (a): Principal angle versus iteration number. (b): Principal angle versus CPU time.}
		\label{fig:DPCP-ManPPA-LP-IRLS-Sub}
	\end{center}
\end{figure}

In Figure \ref{fig:DPCP-fit-curve} we report the quadratic fitting curves of the different algorithms. As shown in \cite{zhu2018dualpcp}, the DPCP formulation can tolerate $O((\# \text{inliers})^2)$ outliers; i.e., $p_2 =\mathcal{O}(p_1^2)$. For different $p_2\in\{40,80,120,\ldots,600\}$, we find the smallest $p_1\in\{60,70,80,\ldots,260\}$ such that $\theta<10^{-1}$. Here, the principal angle $\theta$ is the mean value of $10$ trials; i.e., we find pairs $(p_1,p_2)$ such that for a fixed $p_1$, $p_2$ is the largest number of outliers that can be tolerated. We then use a quadratic function to fit these pairs $(p_1,p_2)$. A higher curve indicates that more outliers can be tolerated and hence the algorithm is more robust. From Figure \ref{fig:DPCP-fit-curve}, we see that the curve corresponding to PSGM-MBLS is the lowest one and thus the least robust, while ManPPA and StManPPA are more robust. In Figure \ref{fig:DPCP-cpu} we report the CPU time versus $p_1$ and $p_2$. {For each algorithm, the shadow area represents the standard deviation (std) of 10 random trials, while the line within the shadow is the mean of those trials. From the left two subfigures of Figure \ref{fig:DPCP-cpu}, we see that the stds of IRLS and ALP are quite significant. In particular, they are usually more than ten times larger than the stds of other compared algorithms. To better illustrate the stds of the other four algorithms, we plot their CPU times in the right two subfigures of Figure \ref{fig:DPCP-cpu}. From these two figures, we see that ManPPA has a larger std than those of the other three algorithms.} Overall, we see that PSGM-MBLS is the fastest and ManPPA is second, and they are both much faster than the other compared algorithms. Figures \ref{fig:DPCP-fit-curve} and \ref{fig:DPCP-cpu} suggest that ManPPA is slightly slower than PSGM-MBLS but is more robust. Moreover, the choice of the parameter $\beta$ for StManPPA is crucial and challenging, as StManPPA-0.9 is faster but less robust than StManPPA-0.8. We leave the determination of the best parameter $\beta$ for StManPPA as a future work.

\begin{figure}[htb]
 	\begin{center}
 		\includegraphics[width=0.5\linewidth]{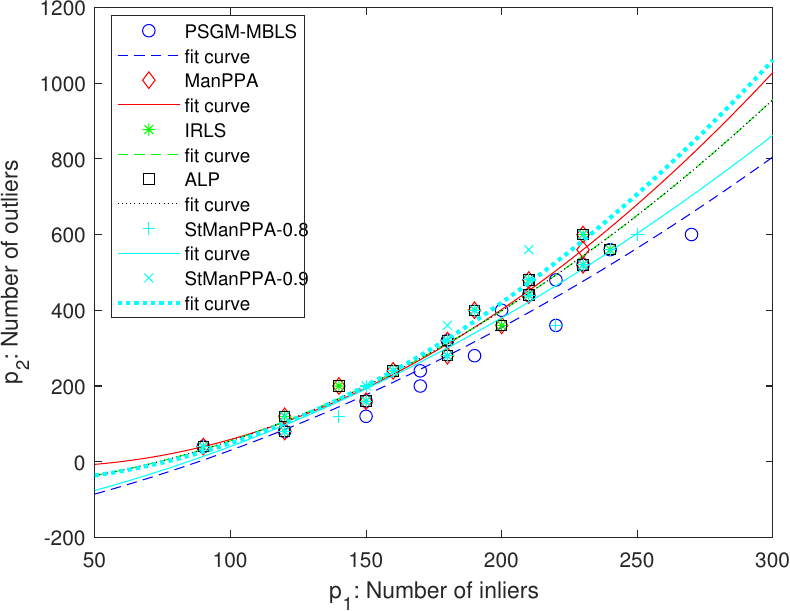}
 		\caption{Quadratic fitting curves ($n=30$).}
 		\label{fig:DPCP-fit-curve}
 	\end{center}
\end{figure}

\begin{figure}[htb]
 	\begin{center}
 		\includegraphics[width=0.38\linewidth]{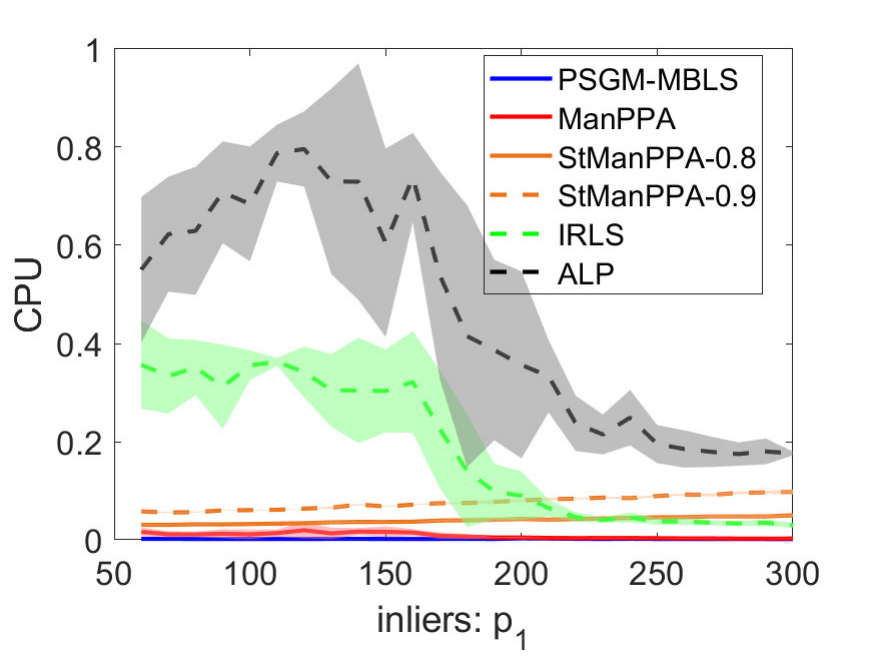}
 		 		\includegraphics[width=0.38\linewidth]{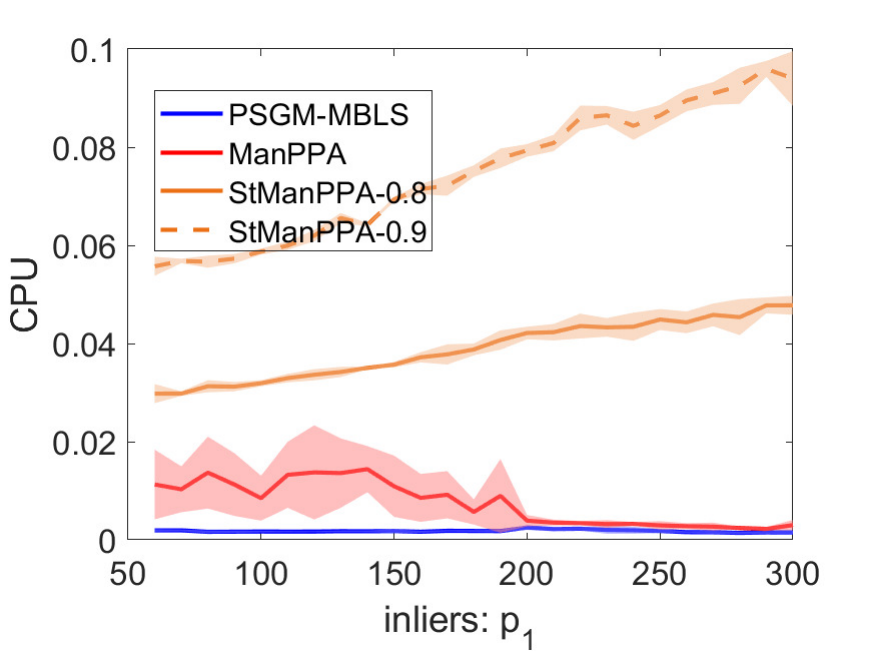}
 		\includegraphics[width=0.38\linewidth]{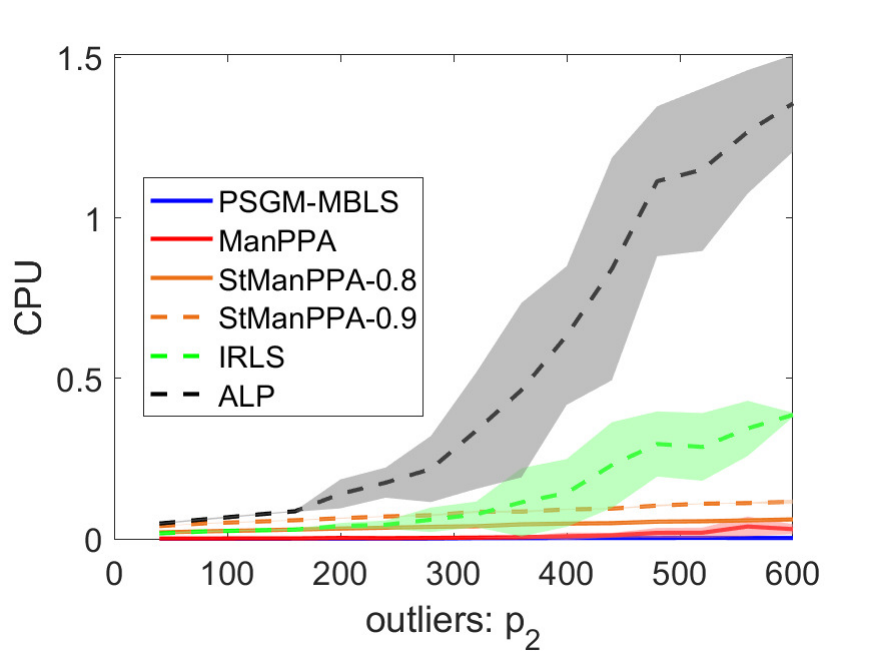}
 		 		 		\includegraphics[width=0.38\linewidth]{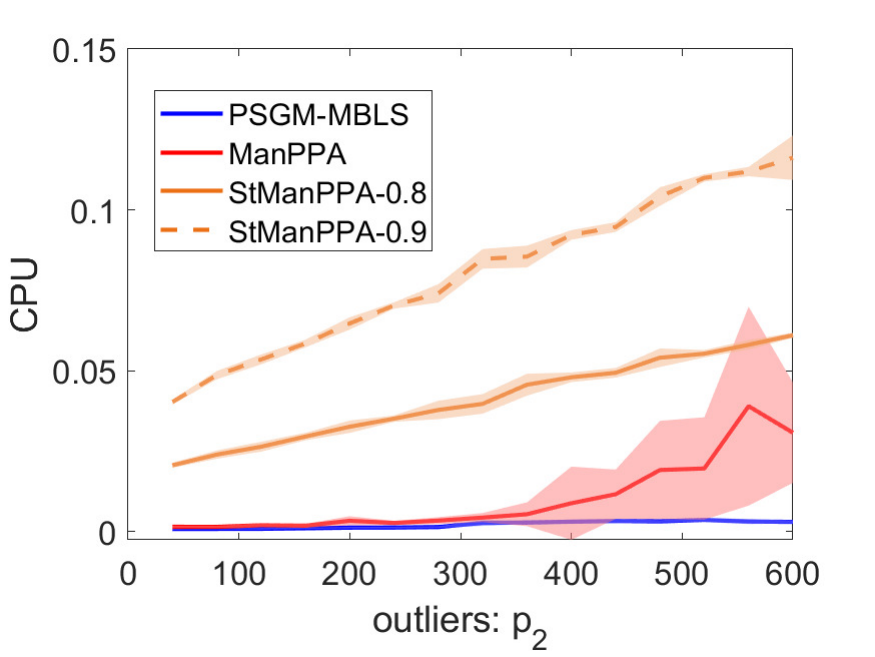}
 		\caption{Upper: CPU time versus the number of inliers $p_1$ ($n=30$, $p_2=320$). Lower: CPU time versus the number of outliers $p_2$ ($n=30$, $p_1=200$). {The shadow area corresponds to the std and the line within the shadow is the mean of 10 random trials.}}
 		\label{fig:DPCP-cpu}
 	\end{center}
\end{figure}

\begin{figure}[!htb]
	\begin{center}
		\includegraphics[width=0.42\linewidth]{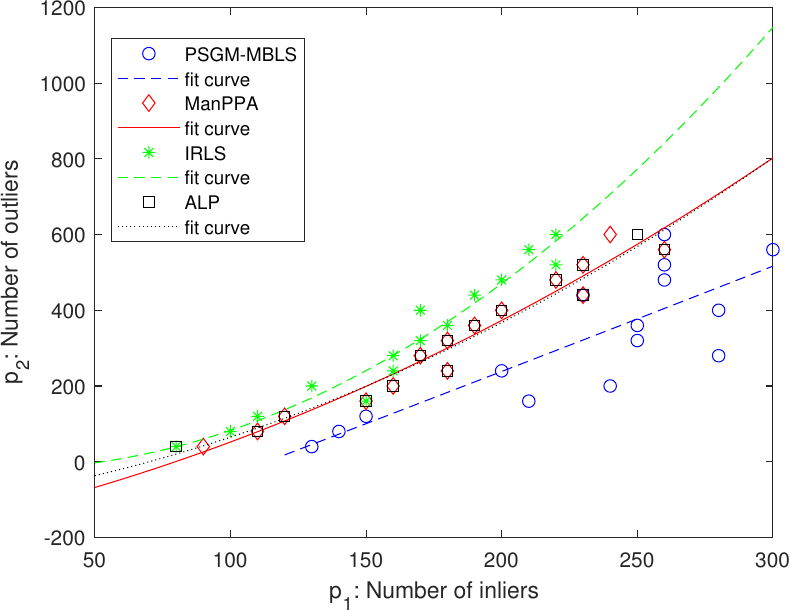} \\
		\includegraphics[width=0.38\linewidth]{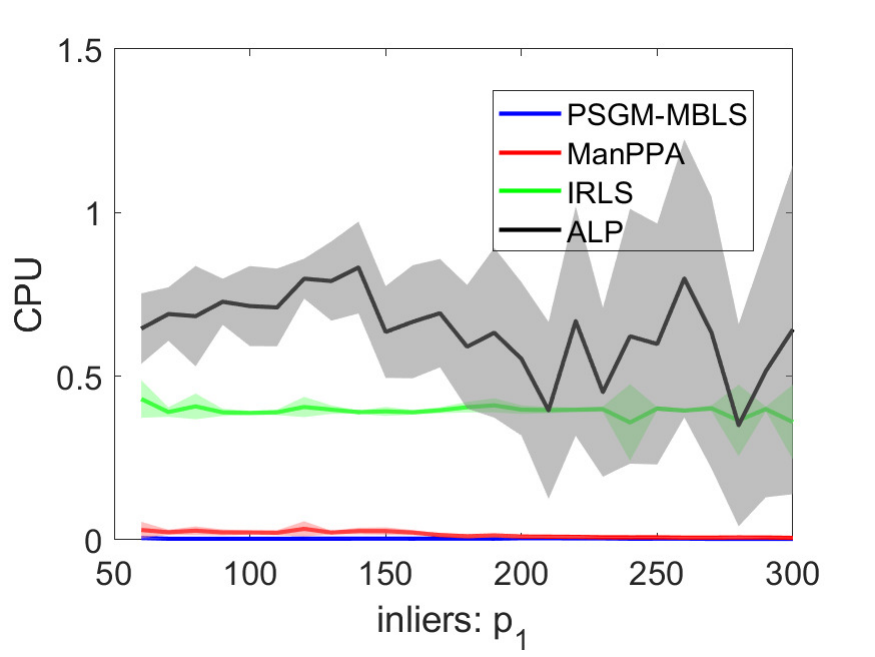}
				\includegraphics[width=0.38\linewidth]{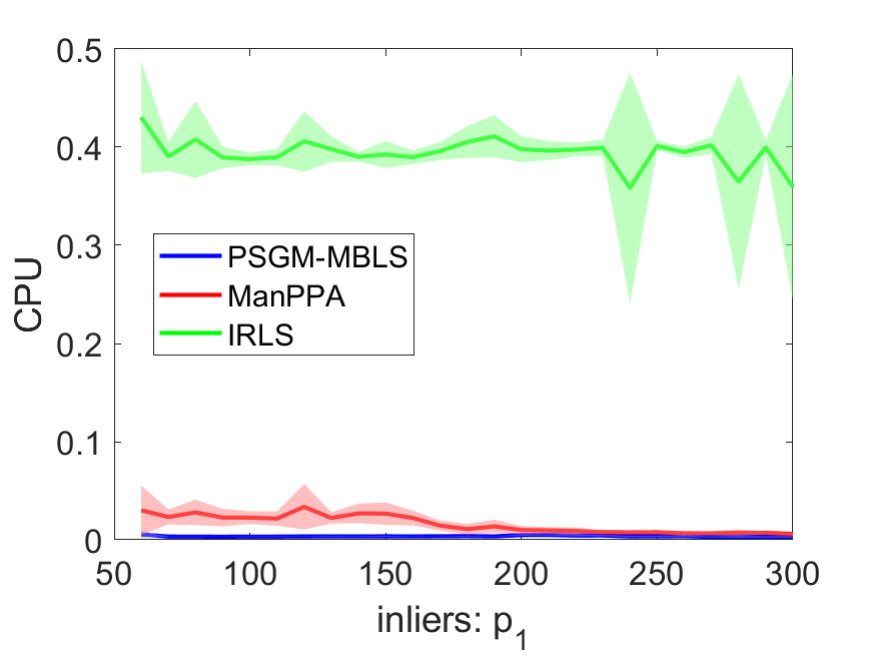}
		\includegraphics[width=0.38\linewidth]{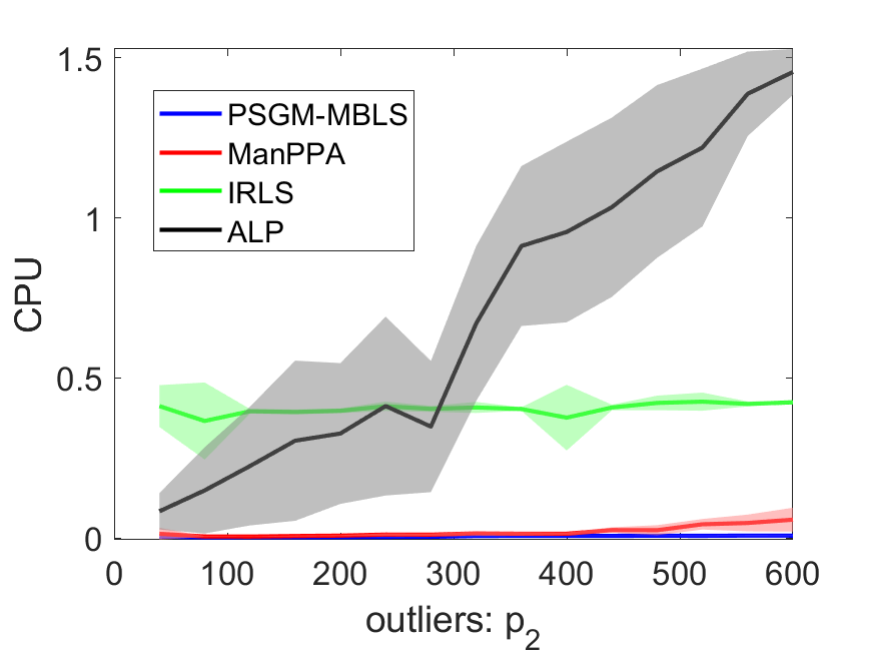}
				\includegraphics[width=0.38\linewidth]{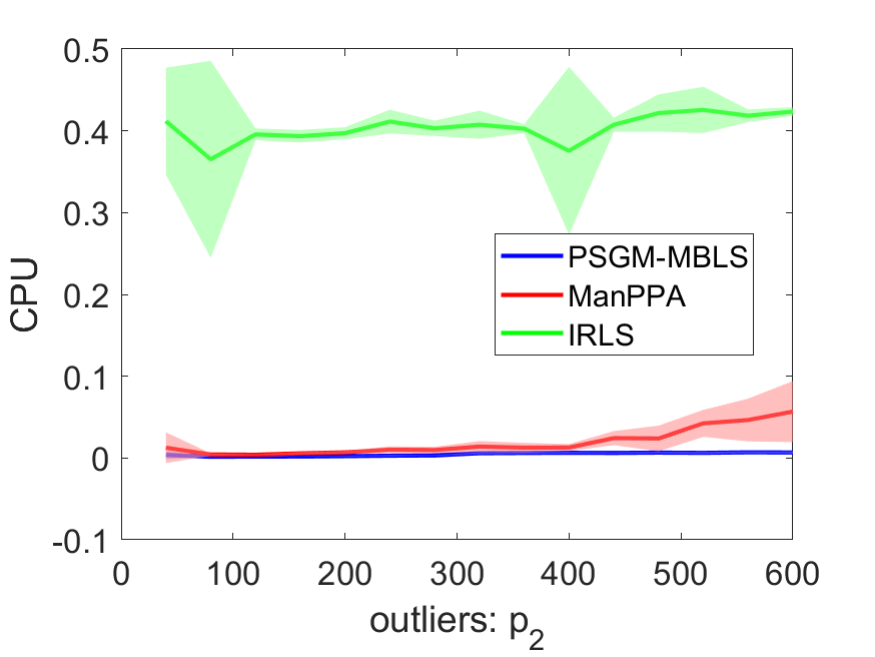}
	    \caption{Comparison on the DPCP formulation \eqref{DPCP-matrix} with $n=30$, $q=2$. First row: Quadratic fitting curves. Second row: CPU time versus the number of inliers $p_1$ ($p_2=320$). Third row: CPU time versus the number of outliers $p_2$ ($p_1=200$). {The shadow area corresponds to the std and the line within the shadow is the mean of 10 random trials.}}
	    \label{fig:DPCP-fit curve-mult-col1}
\end{center}
\end{figure}

\begin{figure}[!htb]
	\begin{center}
	    \includegraphics[width=0.42\linewidth]{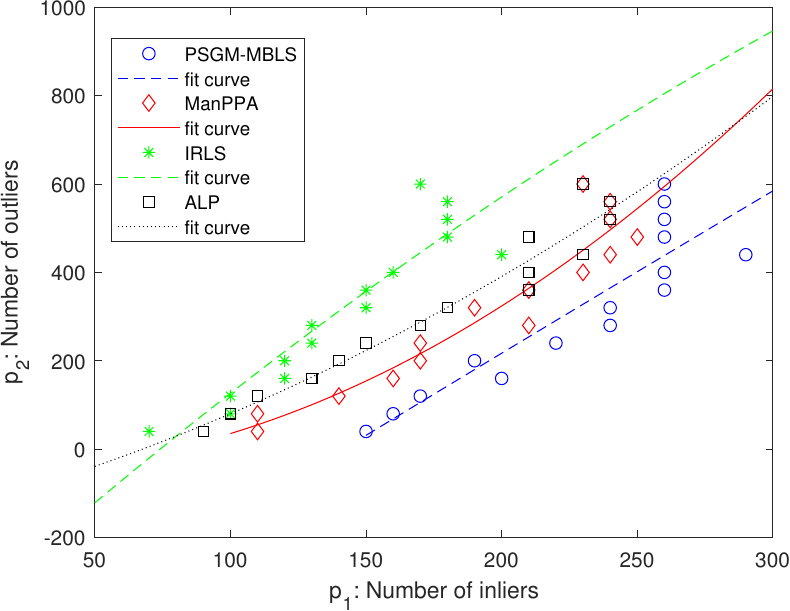} \\
	    		\includegraphics[width=0.38\linewidth]{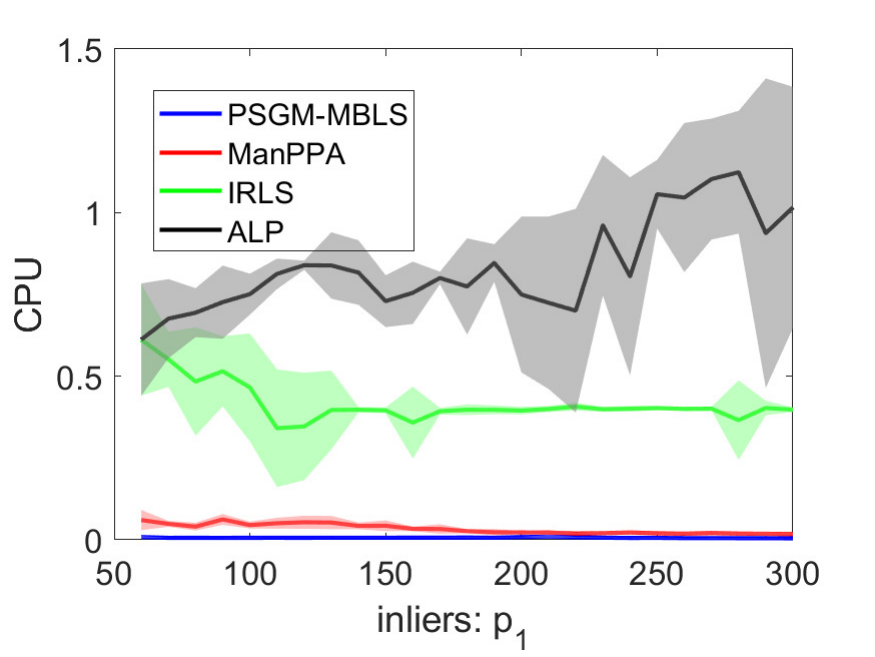}
		\includegraphics[width=0.38\linewidth]{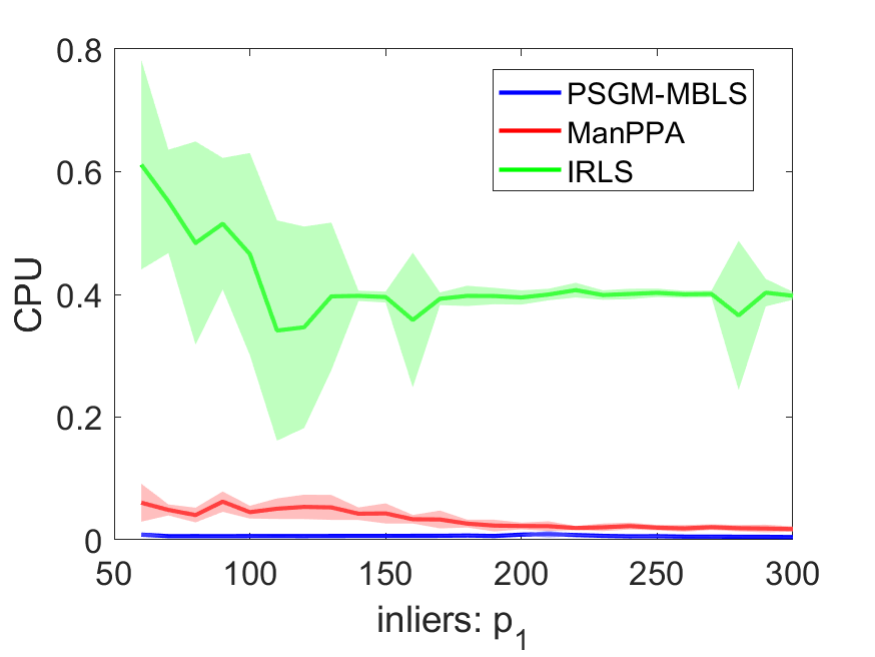}
				\includegraphics[width=0.38\linewidth]{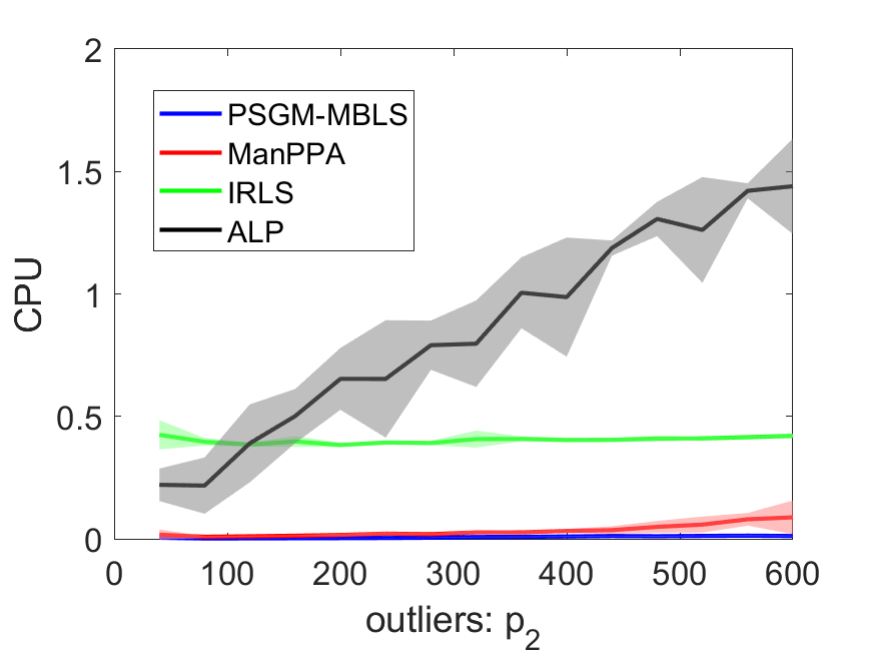}
		\includegraphics[width=0.38\linewidth]{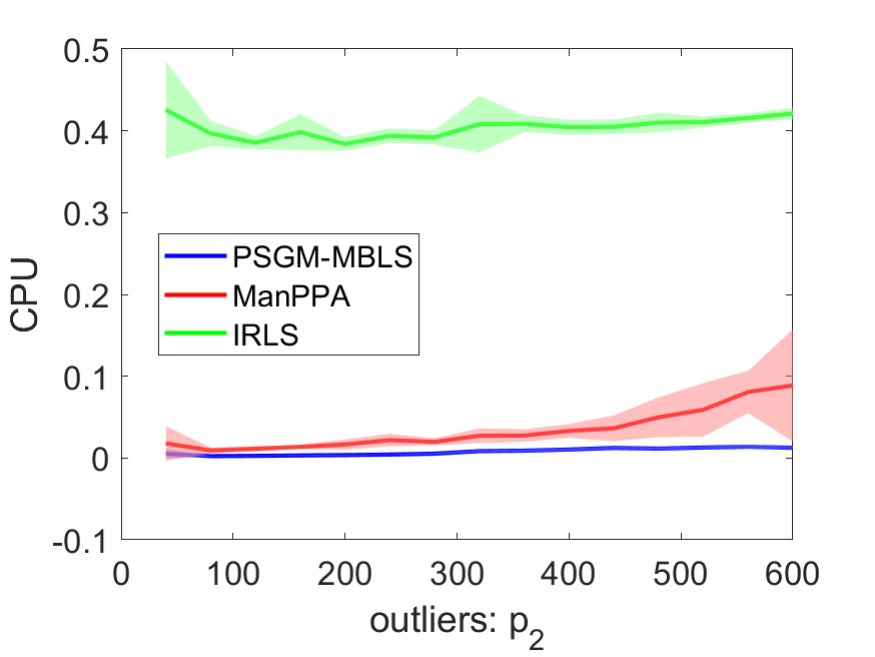}
		\caption{Comparison on the DPCP formulation \eqref{DPCP-matrix} with $n=30$, $q=4$. First row: Quadratic fitting curves. Second row: CPU time versus the number of inliers $p_1$ ($p_2=320$). Third row: CPU time versus the number of outliers $p_2$ ($p_1=200$). {The shadow area corresponds to the std and the line within the shadow is the mean of 10 random trials.}}
		\label{fig:DPCP-fit curve-mult-col2}
	\end{center}
\end{figure}

{\bf Matrix case.} We solved problem \eqref{DPCP-matrix} using sequential ManPPA and compared its performance with {PSGM-MBLS (applied to \eqref{prob:deflation})}, ALP, and IRLS. We report the results for $q=2$ and $q=4$ in Figures \ref{fig:DPCP-fit curve-mult-col1} and \ref{fig:DPCP-fit curve-mult-col2}, respectively. {Since ALP has a very high std, for better illustration of the CPU time comparison, we exclude it from the right two subfigures of Figures \ref{fig:DPCP-fit curve-mult-col1} and \ref{fig:DPCP-fit curve-mult-col2}.} The results suggest that sequential ManPPA is not as robust as IRLS but is the second most efficient one among the four compared algorithms. {Moreover, we find that although PSGM-MBLS is very efficient, its fitting curve is not very good. This is due to the fact that PSGM-MBLS is very sensitive to the choice of step size.}

\subsubsection{Real 3D Point Cloud Road Data}
Next, we compared ManPPA with PSGM-MBLS on the road detection challenge of the KITTI dataset \cite{Geiger2013kittidata}.  This dataset contains image data together with the corresponding 3D points collected by a rotating 3D laser scanner. Similar to \cite{zhu2018dualpcp}, we only used the $\ang{360}$ 3D point clouds to determine which points lie on the road plane (inliers) and which do not (outliers). By using homogeneous coordinates, this can be cast as a robust hyperplane learning problem~\eqref{DPCP} in $\R^4$. As reported in \cite{zhu2018dualpcp}, PSGM-MBLS is the fastest algorithm when compared with other state-of-the-art methods. Thus, we only compared the performance of ManPPA and PSGM-MBLS on problem~\eqref{DPCP}. Table \ref{tab:DPCP-KITTI} reports the area under the Receiver Operator Curve (ROC) and the CPU time. We see that all ROC values of ManPPA are better than those of PSGM-MBLS, with some sacrifice on the CPU time.

\begin{table*}[ht]\small
	\centering
	\caption{Area under ROC and CPU time for annotated 3D point clouds with index 0, 21 in KITTY-CITY-48 and 1, 45, 120, 137, 153 in KITTYCITY-5. The number in parenthesis is the percentage of outliers.}
	\begin{tabular}{c|cc|ccccc}
		\hline
		\multicolumn{1}{c}{} & \multicolumn{2}{c|}{KITTY-CITY-48} & \multicolumn{5}{c}{KITTY-CITY-5} \bigstrut\\
		\hline
		\multicolumn{1}{c}{} & 0 (56$\%$)    & 21 (57$\%$)   & 1 (37$\%$)  & 45 (38$\%$)    & 120 (53$\%$)  & 137 (48$\%$)  & 153(67$\%$) \bigstrut\\
		\hline
		\multicolumn{8}{c}{Area under ROC} \bigstrut\\
		\hline
		ManPPA & 0.99437  & 0.99077  & 0.99810  & 0.99898  & 0.87629  & 0.99969  & 0.75481  \bigstrut[t]\\
		PSGM-MBLS & 0.99420  & 0.99062  & 0.99802  & 0.99891  & 0.86782  & 0.99968  & 0.74933  \bigstrut[b]\\
		\hline
		\multicolumn{8}{c}{CPU time} \bigstrut\\
		\hline
		ManPPA & 0.129  & 0.174  & 0.091  & 0.066  & 0.106  & 0.108  & 0.066  \bigstrut[t]\\
		PSGM-MBLS & 0.028  & 0.015  & 0.034  & 0.029  & 0.029  & 0.014  & 0.017  \bigstrut[b]\\
		\hline
	\end{tabular}%
	\label{tab:DPCP-KITTI}%
\end{table*}%

\subsection{ODL Problem}
We generated instances of the ODL problem by first randomly generating an orthogonal matrix $\hat{\bm X}\in\R^{n\times n}$ and a Bernoulli-Gaussian matrix $\hat{\bm A}\in \R^{n\times p}$ with parameter $\gamma$ (see, e.g.,~\cite{Spielman-Wang-Wright-2012}), then setting $\bm{Y} = \hat{\bm X}\hat{\bm A}$.

{\bf Vector case.} We first compared the performance of ManPPA and StManPPA with ALP, IRLS, and PSGM-MBLS on problem~\eqref{DPCP} with $n=30$, $p =\lceil 10 n^{1.5}\rceil$. Figures \ref{fig:DL-ManPPA-ALP-IRLS1} and \ref{fig:DL-ManPPA-ALP-IRLS2} report the iteration numbers and CPU times of the compared algorithms. The quantity $\theta$ is the angle between $\bm{x}^k$ returned by the algorithm and its nearest column in $\hat{\bm X}$. From Figures \ref{fig:DL-ManPPA-ALP-IRLS1} and \ref{fig:DL-ManPPA-ALP-IRLS2}, we see that PSGM-MBLS is the fastest algorithm in terms of CPU time, while ManPPA is slightly slower. However, they are both much faster than the other compared algorithms. Moreover, we see that ManPPA is much faster than PSGM-MBLS in terms of iteration number. This again can be attributed to the quadratic convergence rate of ManPPA (Theorem \ref{thm:local_rate}).

\begin{figure}[htb]
	\begin{center}
		\includegraphics[width=0.35\linewidth]{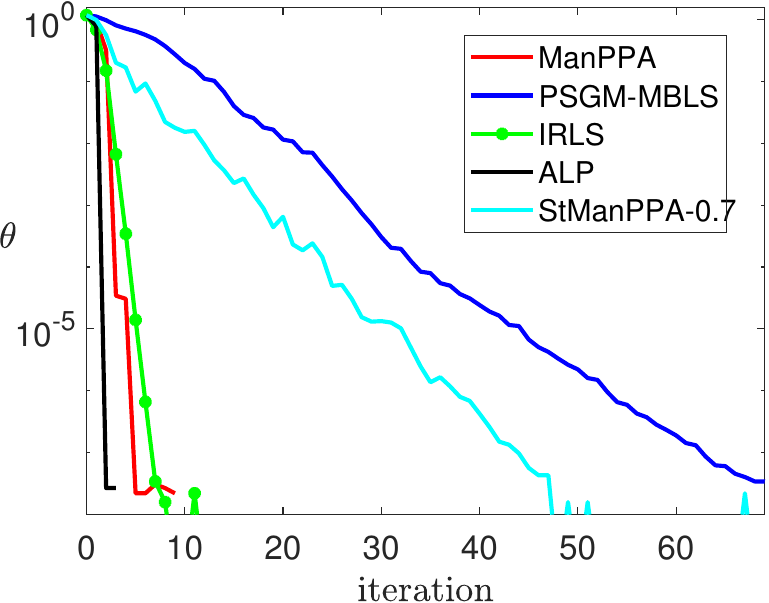}
		\includegraphics[width=0.35\linewidth]{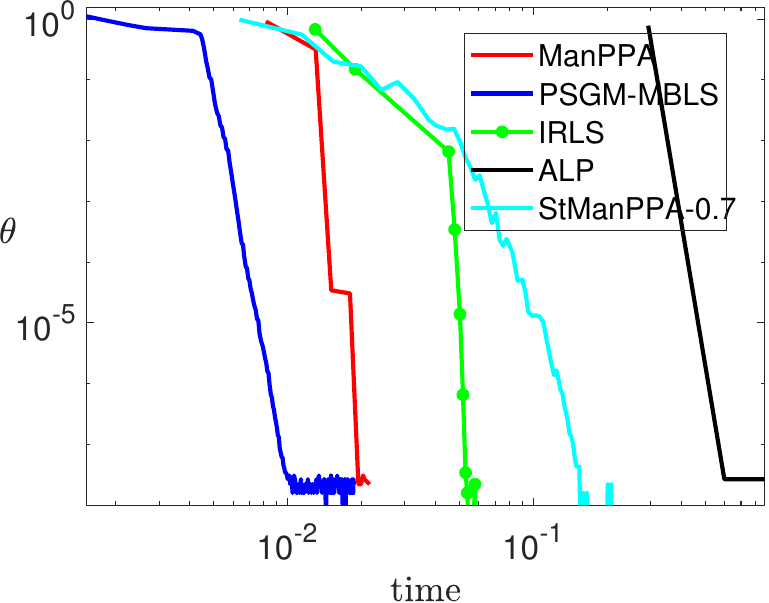}
		\caption{Numerical results for the ODL problem~\eqref{DPCP}: $n=30$, $p= \lceil 10n^{1.5}\rceil$, $\gamma=0.1$. }
		\label{fig:DL-ManPPA-ALP-IRLS1}
\end{center}
\end{figure}

\begin{figure}[htb]
	\begin{center}
		\includegraphics[width=0.35\linewidth]{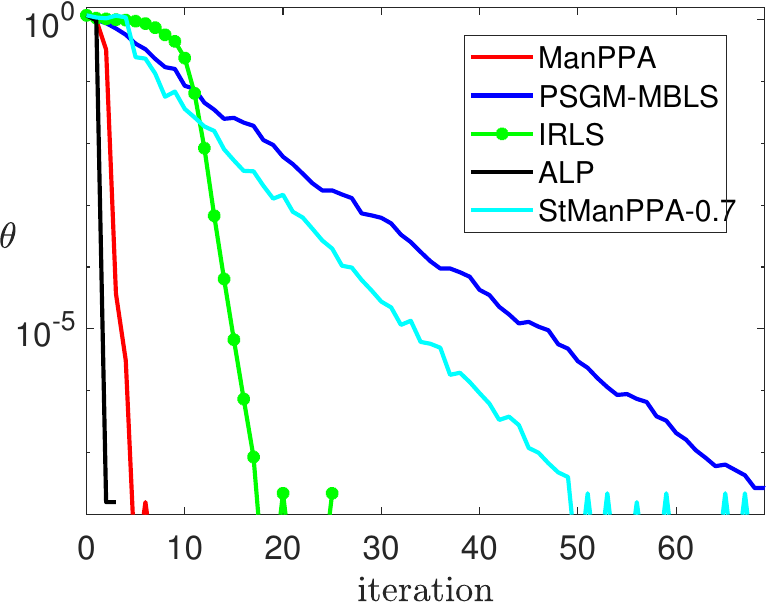}
		\includegraphics[width=0.35\linewidth]{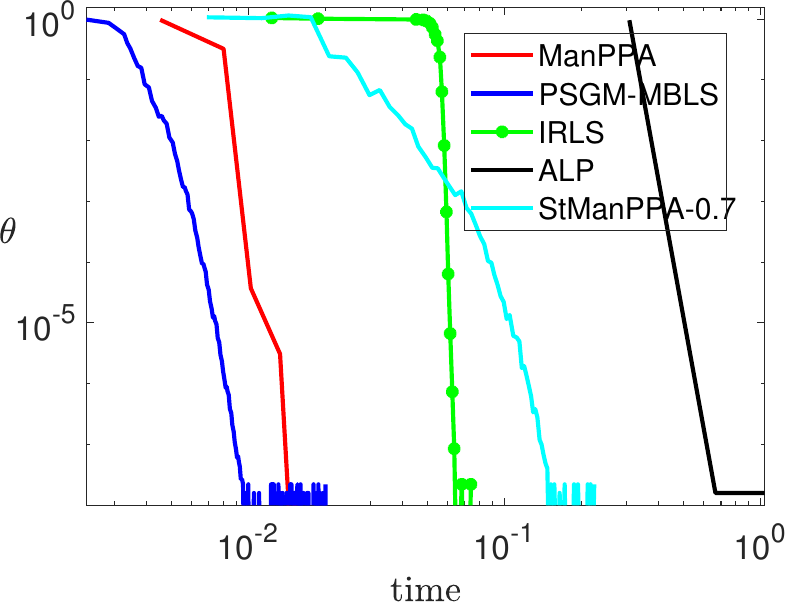}
		\caption{Numerical results for the ODL problem~\eqref{DPCP}: $n=30$, $p= \lceil 10n^{1.5}\rceil$, $\gamma=0.3$. }
		\label{fig:DL-ManPPA-ALP-IRLS2}
	\end{center}
\end{figure}

In Figures \ref{fig:DL-fit curve-single-01} and \ref{fig:DL-fit curve-single-03} we report the linear fitting curves for $\log(n)$ and $\log(p)$ and CPU times of ManPPA, StManPPA-0.8, StManPPA-0.9, IRLS, ALP, and PSGM-MBLS. Note that for the ODL problem, it has been found empirically in  \cite{bai2018subgradient} that the sample size $p$ and dimension $n$ should satisfy $p = O(n^2)$ to guarantee recovery. The linear fitting curves were found in the following manner. For a given dimension $n\in\{5,10,15\ldots,50\}$, we find the smallest sample number $p\in 2n+\{10,20,30,\ldots,800\}$ such that $\theta<10^{-1}$. Here the principal angle $\theta$ is the mean value of $10$ trials. We then use a linear function to fit the points $\{ (\log(n),\log(p)) \}_{n,p}$. From Figures \ref{fig:DL-fit curve-single-01} and \ref{fig:DL-fit curve-single-03}, we find that PSGM-MBLS is the fastest but its fitting curve is high, which suggests that it is not robust. This is because PSGM-MBLS is very sensitive to the choice of step size. ManPPA appears to be the second fastest but is very robust based on the fitting curve. 

\begin{figure}[!htb]
	\begin{center}
		\includegraphics[width=0.42\linewidth]{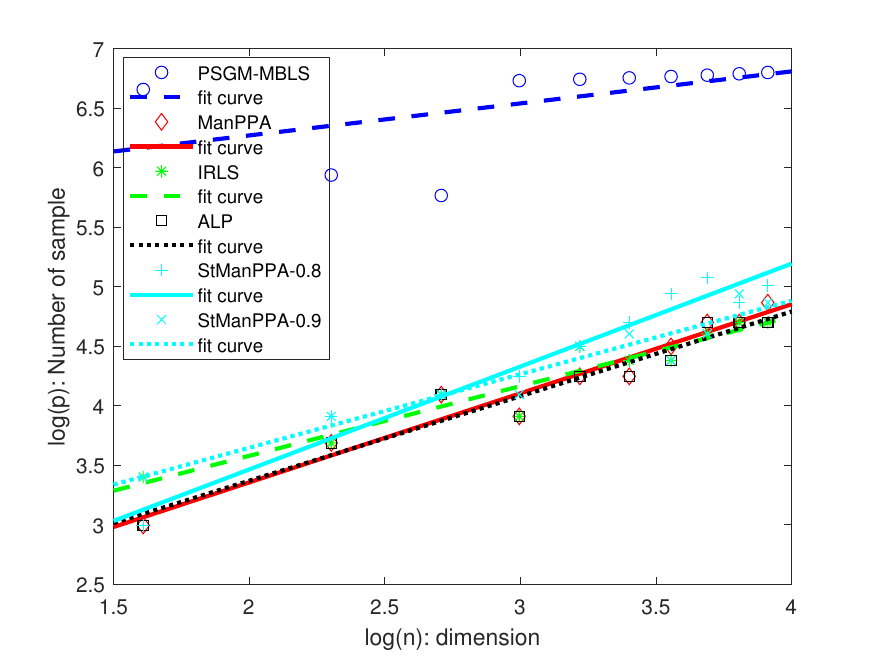} \\
		\includegraphics[width=0.38\linewidth]{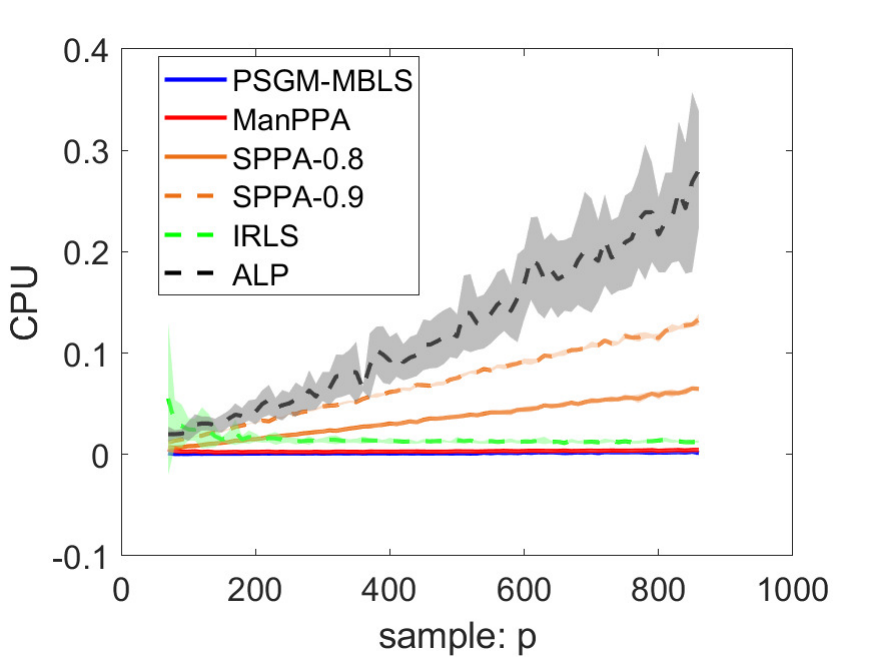}
		\includegraphics[width=0.38\linewidth]{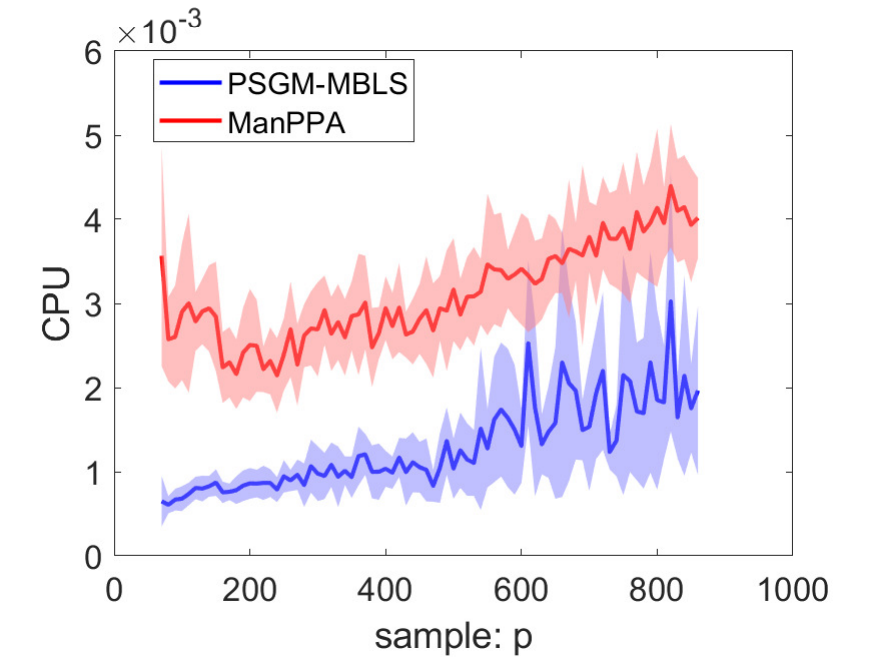}
		\includegraphics[width=0.38\linewidth]{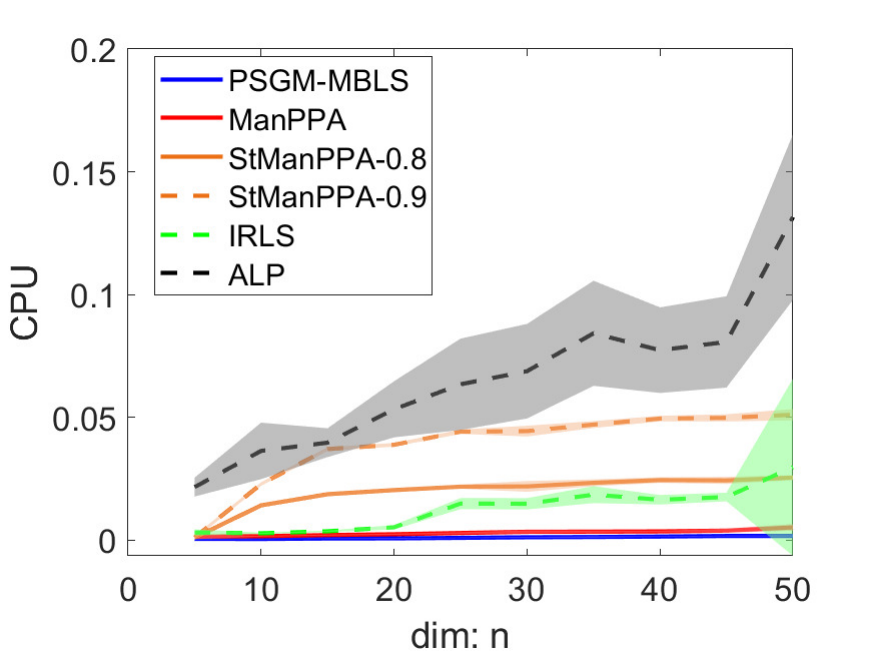}
		\includegraphics[width=0.38\linewidth]{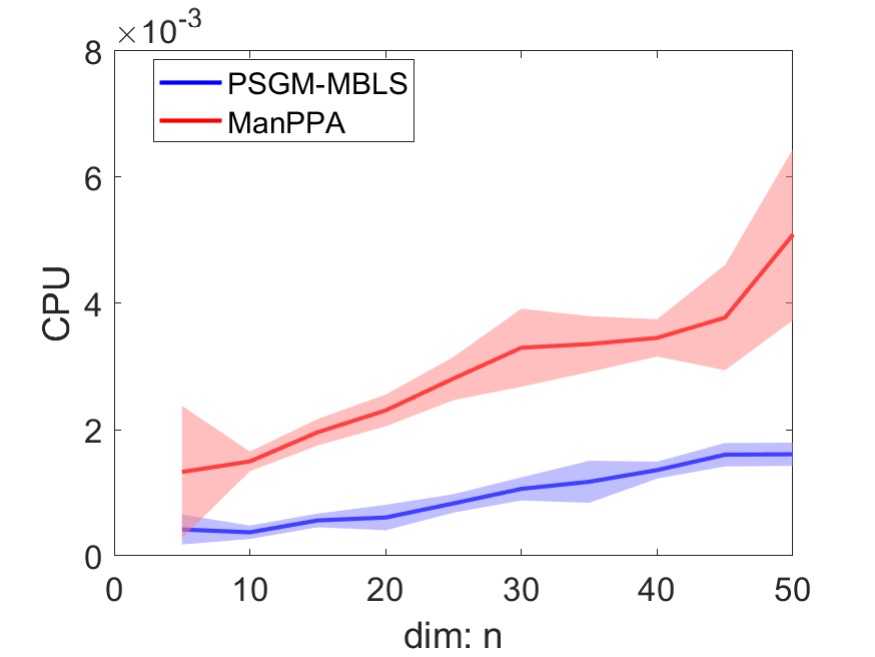}
		\caption{Comparison on the ODL problem~\eqref{DPCP} with $\gamma=0.1$. First row: Fitting curves. Second row: CPU time versus the number of samples $p$, ($n=30$). Third row: CPU time versus the number of dimension $n$, ($p=300$).  {The shadow area corresponds to the std and the line within the shadow is the mean of 10 random trials.}}
		\label{fig:DL-fit curve-single-01}
		\end{center}
\end{figure}

\begin{figure}[!htb]
	\begin{center}
		\includegraphics[width=0.42\linewidth]{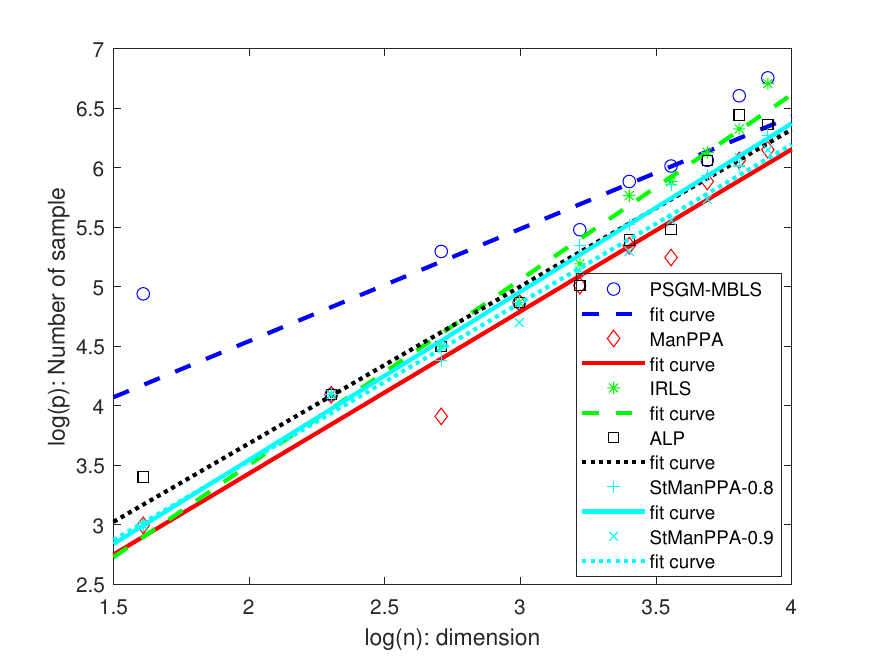} \\
		\includegraphics[width=0.38\linewidth]{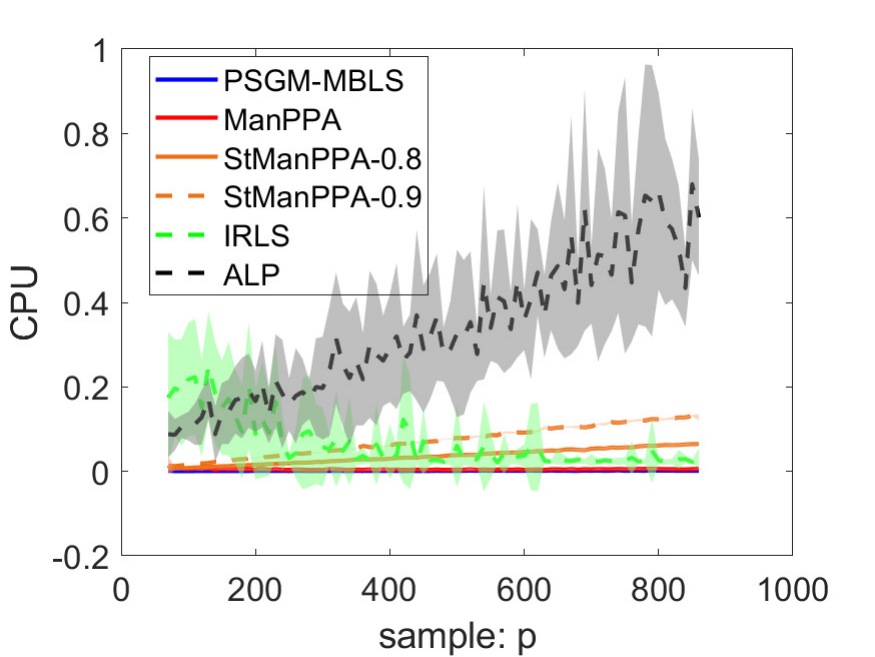}
				\includegraphics[width=0.38\linewidth]{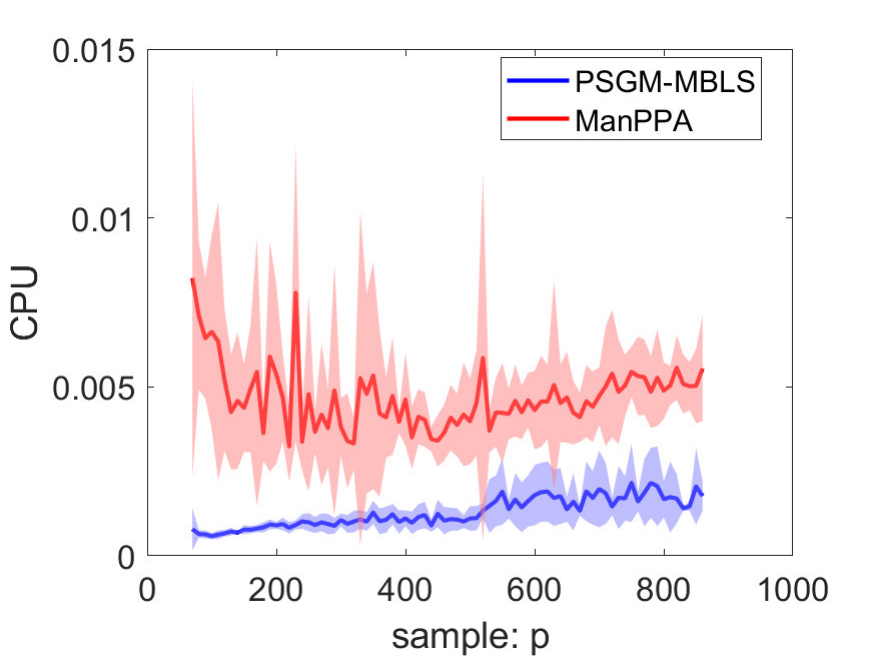}
		\includegraphics[width=0.38\linewidth]{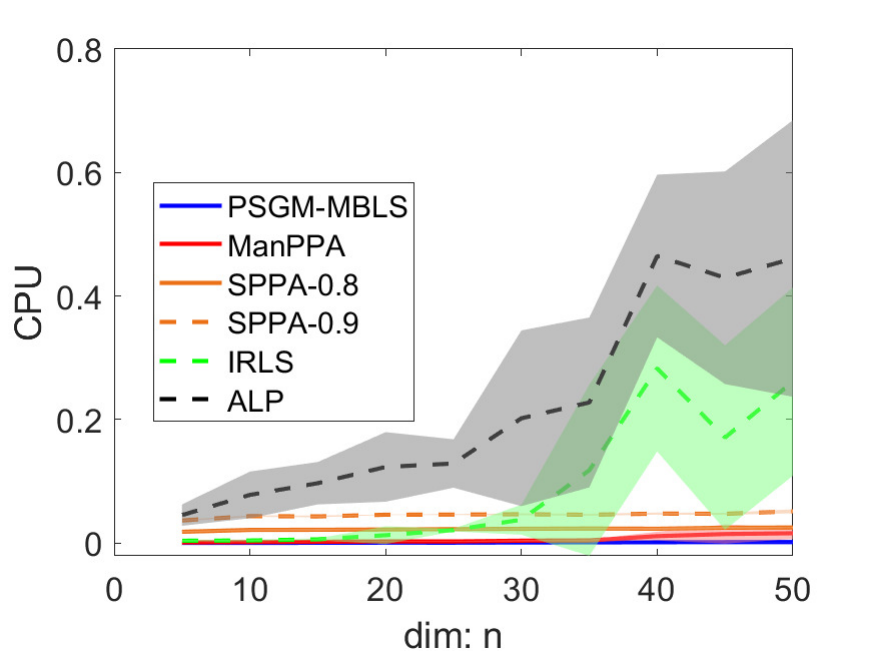}
				\includegraphics[width=0.38\linewidth]{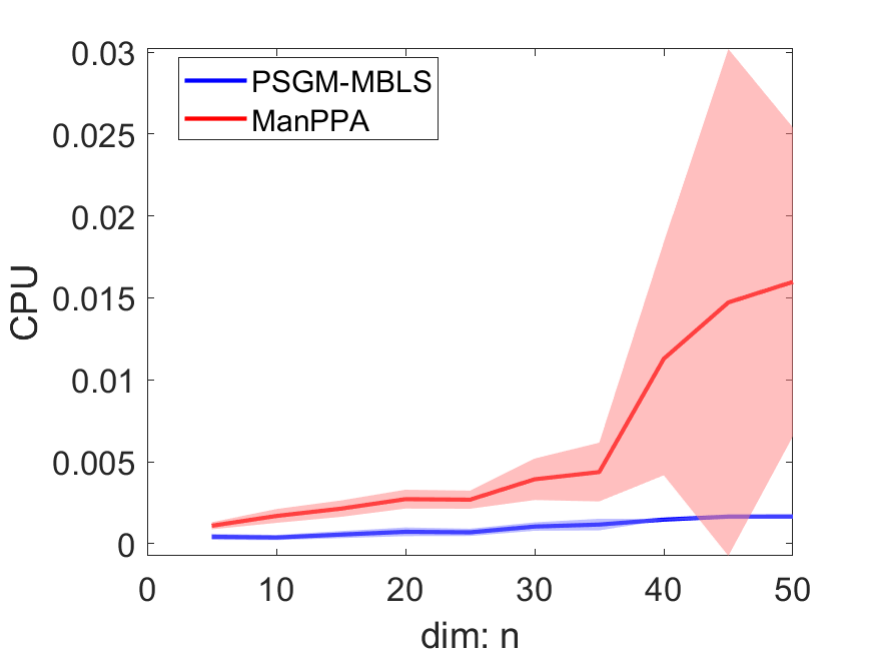}
	    \caption{Comparison on the ODL problem~\eqref{DPCP} with $\gamma=0.3$. First row: Fitting curves. Second row: CPU time versus the number of samples $p$, ($n=30$). Third row: CPU time versus the number of dimension $n$, ($p=300$). {The shadow area corresponds to the std and the line within the shadow is the mean of 10 random trials.}}
		\label{fig:DL-fit curve-single-03}
	\end{center}
\end{figure}

{\bf Matrix case.} To find the entire orthogonal basis, we use sequential ManPPA to solve problem \eqref{DPCP-matrix} with $q=n$. We compared sequential ManPPA with PSGM-MBLS (applied to \eqref{prob:deflation}) and SLP based on ALP, and report the results in Figures \ref{fig:DL-fit curve-whole-01} and \ref{fig:DL-fit curve-whole-03}. We see that sequential ManPPA is not as robust as ALP, but it is much faster than ALP. Note that there is nothing for IRLS to do, as it tackles the objective function $\| \bm{Y}^\top\bm{X} \|_{1,2}$, which is a constant when $\bm{X}^\top\bm{X}=\bm{I}_n$. {We also find that the fitting curve of PSGM-MBLS is very high, which indicates that it fails to recover the dictionary in many cases. Again, this is due to the high sensitivity of PSGM-MBLS to the choice of step size.}

\begin{figure}[!htb]
	\begin{center}
		\includegraphics[width=0.42\linewidth]{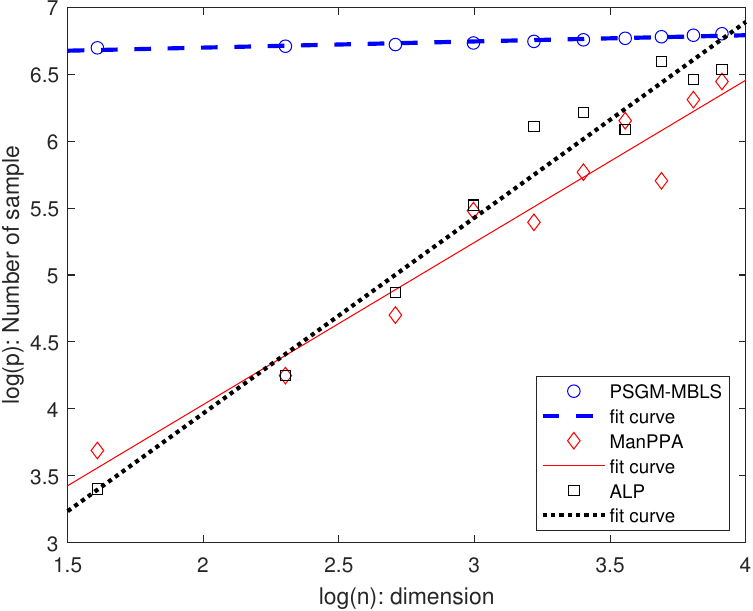} \\
		\includegraphics[width=0.38\linewidth]{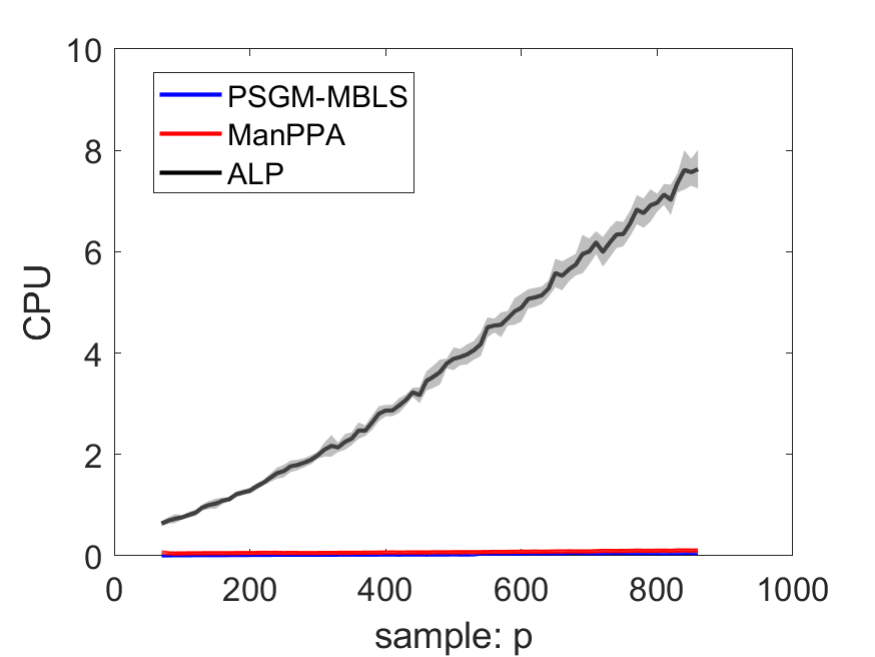}
				\includegraphics[width=0.38\linewidth]{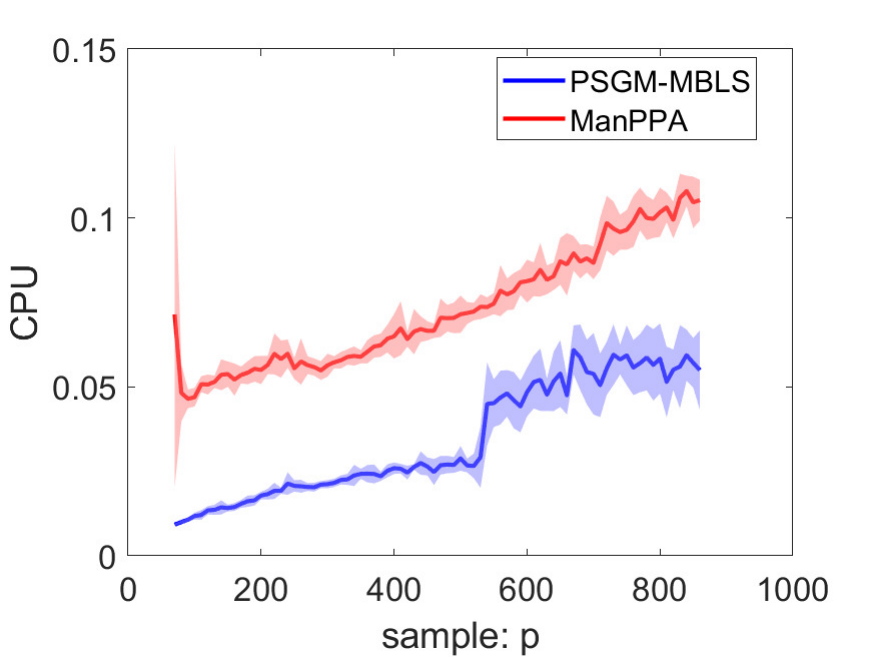}
		\includegraphics[width=0.38\linewidth]{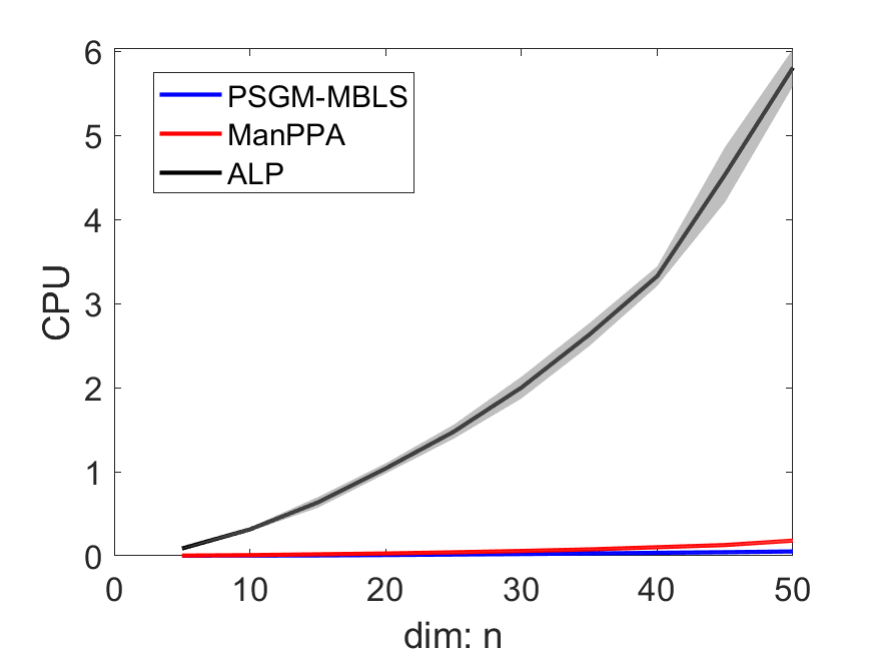}
				\includegraphics[width=0.38\linewidth]{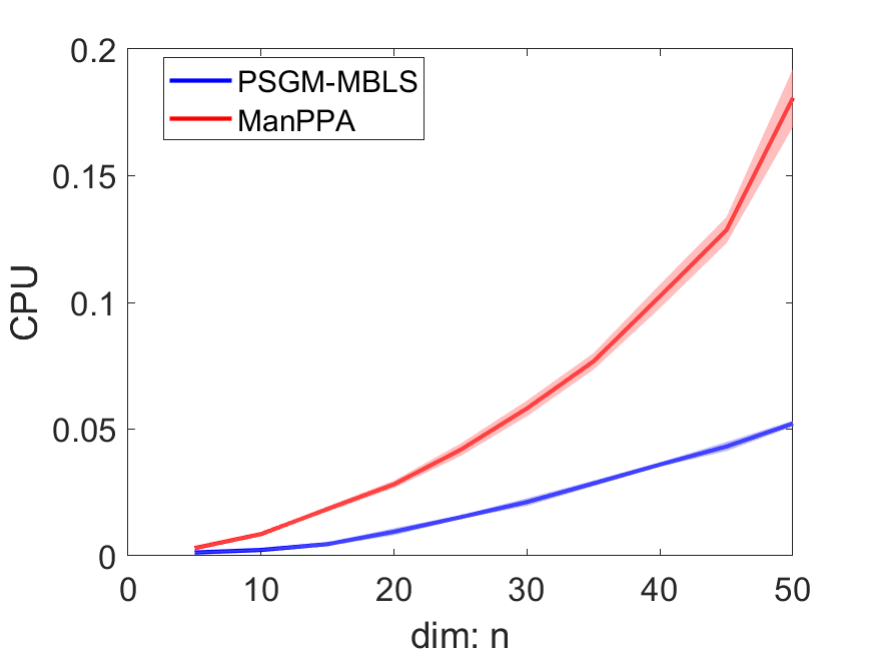}
		\caption{Comparison on the ODL problem \eqref{DPCP-matrix} with $\gamma=0.1$. First row: Linear fitting curves. Second row: CPU time versus the number of samples $p$, ($n=30$). Third row: CPU time versus the number of dimension $n$, ($p=300$). {The shadow area corresponds to the std and the line within the shadow is the mean of 10 random trials.}}
		\label{fig:DL-fit curve-whole-01}
		\end{center}
\end{figure}
		
\begin{figure}[!htb]
	\begin{center}
		\includegraphics[width=0.42\linewidth]{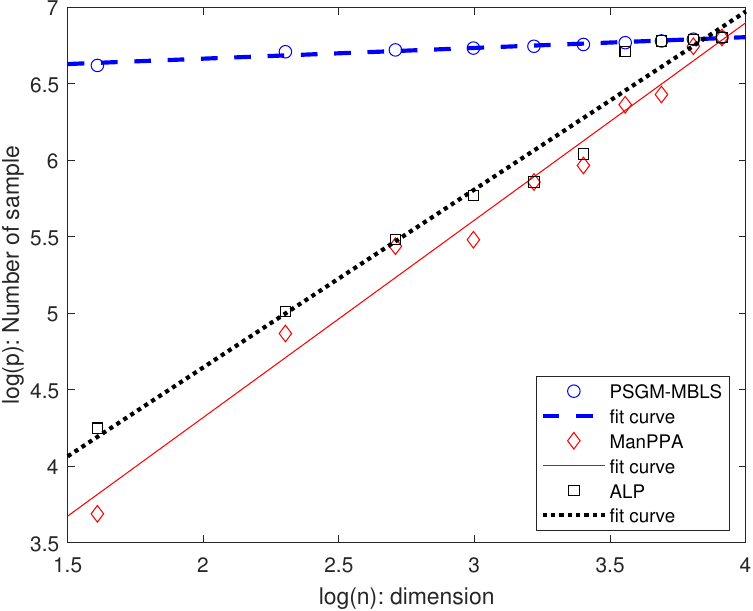} \\
		\includegraphics[width=0.38\linewidth]{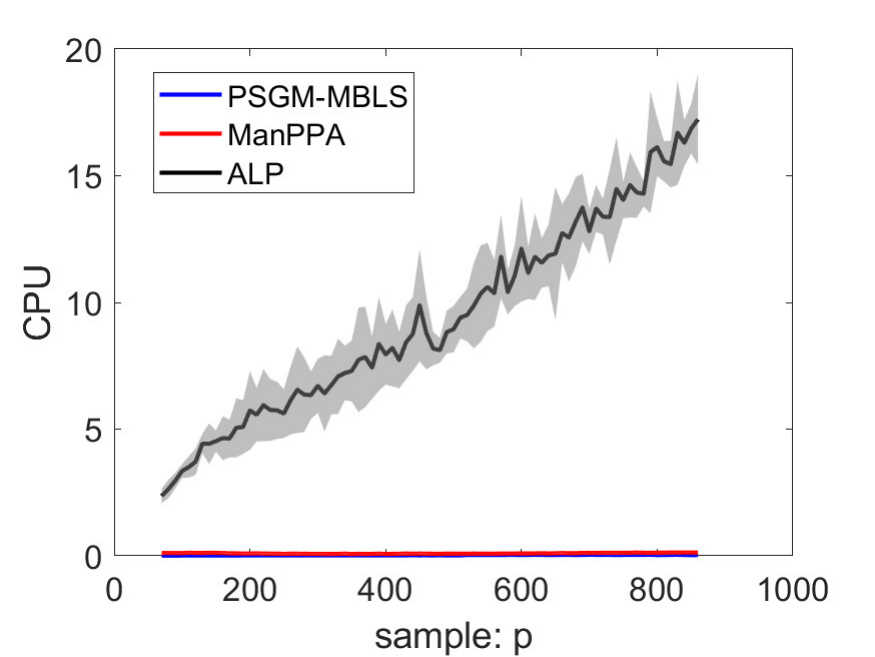}
				\includegraphics[width=0.38\linewidth]{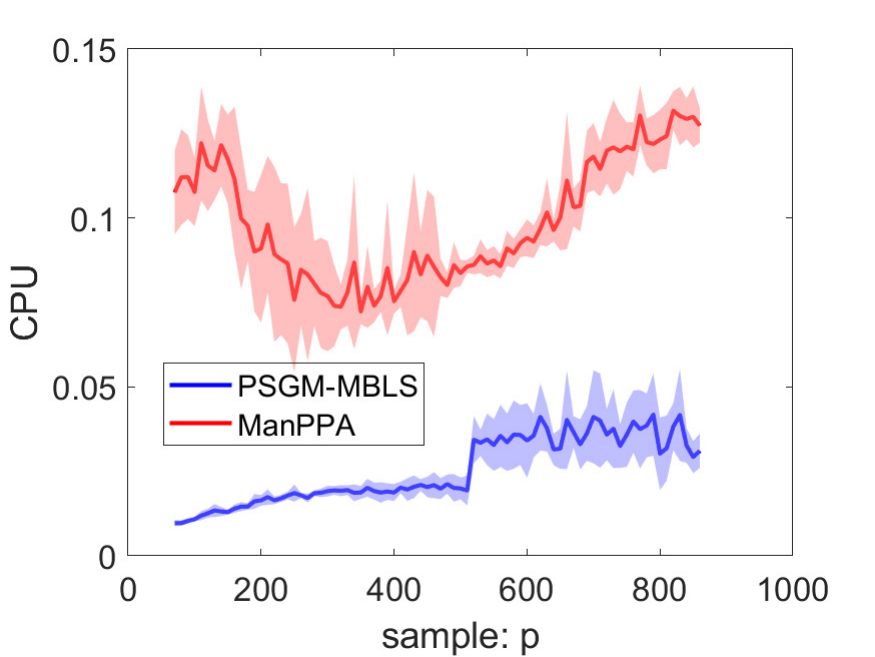}
		\includegraphics[width=0.38\linewidth]{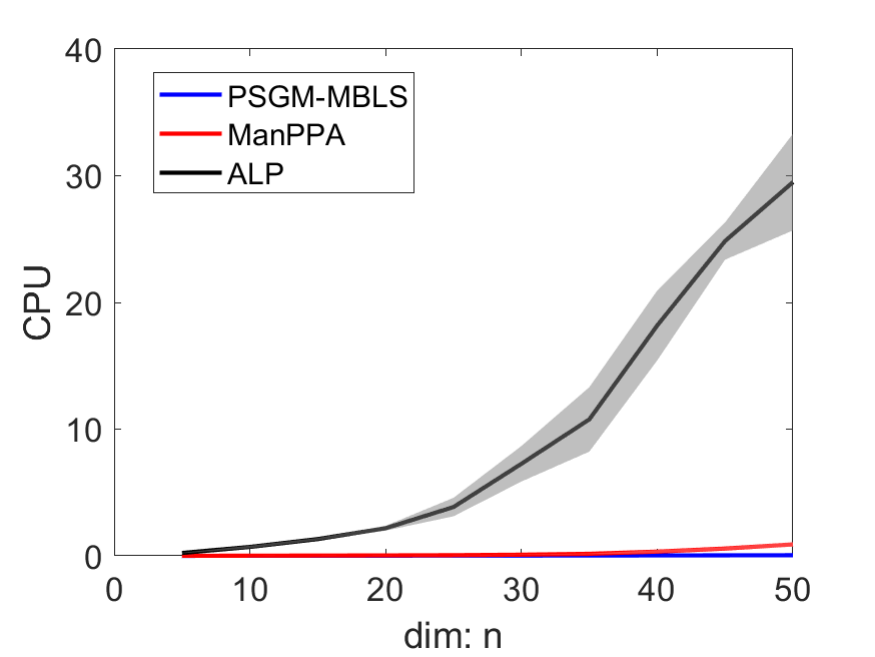}
				\includegraphics[width=0.38\linewidth]{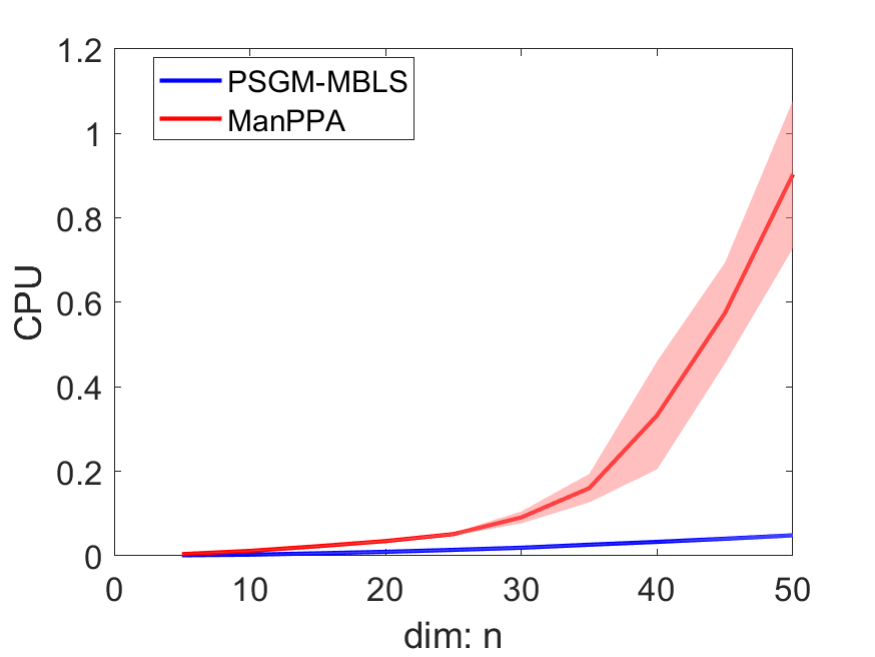}
		\caption{Comparison on the ODL problem \eqref{DPCP-matrix} with $\gamma=0.3$. First row: Linear fitting curves. Second row: CPU time versus the number of samples $p$, ($n=30$). Third row: CPU time versus the number of dimension $n$, ($p=300$). {The shadow area corresponds to the std and the line within the shadow is the mean of 10 random trials.}}
		\label{fig:DL-fit curve-whole-03}
	\end{center}
\end{figure}

\section{Conclusions}\label{sec:conclusion}
In this paper, we presented ManPPA and its stochastic variant StManPPA for solving problem~\eqref{DPCP}. By exploiting the manifold structure of the constraint set $\M$, these methods not only are practically efficient but also possess convergence guarantees that are provably superior to those of existing subgradient-type methods. Using ManPPA as a building block, we also proposed a new sequential approach to solving the matrix analog~\eqref{DPCP-matrix} of problem~\eqref{DPCP}. We conducted extensive numerical experiments to compare the performance of our proposed algorithms with existing ones on the ODL problem and DPCP formulation of the RSR problem. The results demonstrated the efficiency and efficacy of our proposed methods.

\appendix
\section*{Appendix}

\section{Useful Properties of $\Proj_{\M}$}
In this section, we collect some useful properties of the projector $\Proj_{\M}$. 
\begin{proposition} \label{prop:retr}
For any $\bm{x}\in\M$ and $\bm{d}\in\R^n$ satisfying $\bm{d}^\top\bm{x}=0$ (i.e., $\bm{d}$ is a tangent vector at $\bm{x}$), we have
\begin{equation} \label{eq:retr1}
\| \Proj_{\M}(\bm{x}+\bm{d}) - (\bm{x}+\bm{d}) \|_2 \le \frac{1}{2}\|\bm{d}\|_2^2.
\end{equation}
Moreover, if $\|\bm{d}\|_2 \le D$ for some $D \in (0,+\infty)$, then
\begin{equation} \label{eq:retr2}
\| \Proj_{\M}(\bm{x}+\bm{d}) - \bm{x} \|_2 \ge \frac{1}{(1+D^2)^{3/4}} \|\bm{d}\|_2.
\end{equation}
\end{proposition}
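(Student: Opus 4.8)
The plan is to exploit the closed-form expression $\Proj_{\M}(\bm{v}) = \bm{v}/\|\bm{v}\|_2$ for the projection onto the sphere and reduce both inequalities to elementary one-variable estimates. First I would record the identity that comes from the tangency condition $\bm{d}^\top\bm{x}=0$ together with $\|\bm{x}\|_2=1$, namely $\|\bm{x}+\bm{d}\|_2^2 = \|\bm{x}\|_2^2 + \|\bm{d}\|_2^2 = 1+\|\bm{d}\|_2^2$. Writing $r = \|\bm{d}\|_2$ and $s = \|\bm{x}+\bm{d}\|_2 = \sqrt{1+r^2} \ge 1$, the projected point is $\Proj_{\M}(\bm{x}+\bm{d}) = (\bm{x}+\bm{d})/s$.

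For \eqref{eq:retr1}, I would note that $\Proj_{\M}(\bm{x}+\bm{d}) - (\bm{x}+\bm{d}) = (1/s - 1)(\bm{x}+\bm{d})$, so taking norms gives $\|\Proj_{\M}(\bm{x}+\bm{d}) - (\bm{x}+\bm{d})\|_2 = s\cdot(1 - 1/s) = s - 1$. The claim then reduces to the scalar inequality $\sqrt{1+r^2} - 1 \le r^2/2$, which follows by squaring $\sqrt{1+r^2} \le 1 + r^2/2$ and observing that the difference of the two squares is $r^4/4 \ge 0$.

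For \eqref{eq:retr2}, the key is an orthogonal decomposition. Writing $\Proj_{\M}(\bm{x}+\bm{d}) - \bm{x} = \tfrac{1}{s}\bigl((1-s)\bm{x} + \bm{d}\bigr)$ and using $\bm{x}^\top\bm{d}=0$, the two summands are orthogonal, so $\|(1-s)\bm{x}+\bm{d}\|_2^2 = (s-1)^2 + r^2 \ge r^2$. Hence $\|\Proj_{\M}(\bm{x}+\bm{d}) - \bm{x}\|_2 \ge r/s = r/\sqrt{1+r^2}$. It then remains to verify $\sqrt{1+r^2} \le (1+D^2)^{3/4}$ whenever $r \le D$; this follows from the chain $\sqrt{1+r^2}\le\sqrt{1+D^2} = (1+D^2)^{1/2} \le (1+D^2)^{3/4}$, where the last step uses $1+D^2\ge 1$.

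The computations are routine; the only genuinely load-bearing observations are (i) the tangency identity $\|\bm{x}+\bm{d}\|_2^2 = 1+\|\bm{d}\|_2^2$, and (ii) the orthogonal decomposition in the second part, which is what lets me drop the $(s-1)^2$ term cleanly and thereby obtain a lower bound. I expect the main (minor) subtlety to be the exponent bookkeeping in the final step: the argument in fact yields the stronger estimate with exponent $1/2$ in place of $3/4$, so I would state the monotonicity inequality $(1+D^2)^{1/2}\le(1+D^2)^{3/4}$ explicitly to justify passing to the looser but presumably more convenient exponent claimed in the statement.
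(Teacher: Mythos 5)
Your proof is correct. For \eqref{eq:retr1} you argue exactly as the paper does: both reduce to the identity $\|\Proj_{\M}(\bm{x}+\bm{d})-(\bm{x}+\bm{d})\|_2=\sqrt{1+\|\bm{d}\|_2^2}-1$ and the scalar bound $\sqrt{1+r^2}-1\le r^2/2$. For \eqref{eq:retr2}, however, you take a genuinely different route. The paper computes the \emph{exact} squared distance,
\begin{equation*}
\left\| \frac{\bm{x}+\bm{d}}{\|\bm{x}+\bm{d}\|_2}-\bm{x}\right\|_2^2 = 2\left(1-\frac{1}{\sqrt{1+\|\bm{d}\|_2^2}}\right),
\end{equation*}
and then invokes the scalar inequality $\tfrac{1}{\sqrt{1+x^2}}\le 1-\tfrac{x^2}{2(1+D^2)^{3/2}}$ for $x\in[0,D]$, which it states without proof and which requires a small verification (e.g., via $\sqrt{1+x^2}(\sqrt{1+x^2}+1)\le 2(1+D^2)\le 2(1+D^2)^{3/2}$). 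You instead split $\Proj_{\M}(\bm{x}+\bm{d})-\bm{x}=\tfrac{1}{s}\bigl((1-s)\bm{x}+\bm{d}\bigr)$ into two orthogonal pieces, discard the nonnegative term $(s-1)^2$, and obtain the clean lower bound $\|\bm{d}\|_2/\sqrt{1+\|\bm{d}\|_2^2}$; monotonicity in $r\le D$ then gives the constant $(1+D^2)^{-1/2}$, which dominates the claimed $(1+D^2)^{-3/4}$. Your approach buys two things: it avoids the unproved calculus-flavored scalar inequality entirely (everything is algebra plus one orthogonality observation), and it yields a strictly sharper constant, with the weakening to the $3/4$ exponent isolated in a single explicit monotonicity step. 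What the paper's approach buys is the exact distance formula itself, which in principle gives the tight bound (your decomposition loses a factor $2s/(s+1)\in[1,2)$ in the squared distance), though the paper does not exploit this extra precision. Either argument fully establishes the proposition as stated.
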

\begin{proof}
It is straightforward to verify that
\[ \left\| \frac{\bm{x}+\bm{d}}{\| \bm{x}+\bm{d} \|_2} -  (\bm{x}+\bm{d}) \right\|_2 = \sqrt{1+\|\bm{d}\|_2^2} - 1. \]
We then have~\eqref{eq:retr1} by using the fact that $\sqrt{1+x^2}-1 \le \tfrac{1}{2}x^2$ for all $x\in\R$. Similarly, since
\[ \left\| \frac{\bm{x}+\bm{d}}{\| \bm{x}+\bm{d} \|_2} -  \bm{x} \right\|_2^2 = 2\left( 1-\frac{1}{\sqrt{1+\|\bm{d}\|_2^2}} \right) \]
and $\tfrac{1}{\sqrt{1+x^2}} \le 1-\tfrac{1}{2(1+D^2)^{3/2}}x^2$ for all $x\in[0,D]$, we get~\eqref{eq:retr2}.
\end{proof}

\begin{proposition}\label{polar-nonexpensive}
For any $\bm{x},\bm{z}\in\M$ and $\bm{d}\in\R^n$ satisfying $\bm{d}^\top\bm{x}=0$, we have
\[ \| \Proj_{\M}(\bm{x}+\bm{d}) - \bm{z} \|_2 \le \|\bm{x}+\bm{d}-\bm{z}\|_2. \]
\end{proposition}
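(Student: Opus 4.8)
The plan is to reduce the inequality to a one-line algebraic comparison on the sphere. First I would set $\bm{w} = \bm{x}+\bm{d}$ and record the only structural fact I need about it: since $\bm{x}\in\M$ (so $\|\bm{x}\|_2=1$) and $\bm{d}^\top\bm{x}=0$, the Pythagorean identity gives $\|\bm{w}\|_2^2 = \|\bm{x}\|_2^2 + \|\bm{d}\|_2^2 = 1 + \|\bm{d}\|_2^2 \ge 1$. Writing $r := \|\bm{w}\|_2 \ge 1$, we have $\Proj_{\M}(\bm{w}) = \bm{w}/r$, so the claim is precisely that radial projection of a point lying on or outside the unit ball is no farther from any point $\bm{z}\in\M$ than the point itself, i.e., $\|\bm{w}/r - \bm{z}\|_2 \le \|\bm{w}-\bm{z}\|_2$.

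Next I would compare the two squared distances directly, using $\|\bm{z}\|_2=1$. Expanding gives $\|\bm{w}-\bm{z}\|_2^2 = r^2 - 2\bm{w}^\top\bm{z} + 1$ and $\|\bm{w}/r-\bm{z}\|_2^2 = 2 - (2/r)\bm{w}^\top\bm{z}$. Subtracting and factoring out $(r-1)$ yields
\[
\|\bm{w}-\bm{z}\|_2^2 - \|\bm{w}/r-\bm{z}\|_2^2 = (r-1)\Big[(r+1) - \tfrac{2}{r}\,\bm{w}^\top\bm{z}\Big].
\]

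To finish I would argue both factors are nonnegative. The prefactor $r-1$ is nonnegative because $r\ge 1$. For the bracket, Cauchy--Schwarz gives $\bm{w}^\top\bm{z} \le \|\bm{w}\|_2\|\bm{z}\|_2 = r$, so $(2/r)\,\bm{w}^\top\bm{z} \le 2 \le r+1$, whence the bracket is at least $(r+1)-2 = r-1 \ge 0$. Thus the difference of squared distances is nonnegative, which gives the claim after taking square roots.

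There is essentially no hard analytic step here; the computation is elementary once the right quantity is formed. The one point worth flagging is that the hypotheses $\bm{x}\in\M$ and $\bm{d}^\top\bm{x}=0$ enter only through the bound $\|\bm{x}+\bm{d}\|_2 \ge 1$, and this bound is exactly what the argument needs: if $\|\bm{w}\|_2 < 1$ the factorization would produce a negative prefactor and the inequality could reverse. So the crux is recognizing that tangency forces $\bm{x}+\bm{d}$ to lie outside the unit ball, after which the factored identity together with Cauchy--Schwarz closes the argument.
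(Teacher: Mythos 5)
Your proof is correct and rests on exactly the same ingredients as the paper's own argument: expand the squared distances of points on the sphere, invoke Cauchy--Schwarz, and use the fact that tangency forces $\|\bm{x}+\bm{d}\|_2 \ge 1$. The only difference is organizational --- you factor the difference of squared distances as $(r-1)\bigl[(r+1)-\tfrac{2}{r}\,\bm{w}^\top\bm{z}\bigr]$ and check both factors are nonnegative, whereas the paper chains upper bounds on $\|\Proj_{\M}(\bm{x}+\bm{d})-\bm{z}\|_2^2$ (routing through the estimate $\sqrt{1+t^2}-1\le \tfrac{1}{2}t^2$ and re-using orthogonality to recognize the final expression as $\|\bm{x}+\bm{d}-\bm{z}\|_2^2$); both reduce to the same two facts, though your packaging makes it transparent that the result holds for any point on or outside the unit ball.
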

\begin{proof}
We compute
\begin{align}
& \left\| \frac{\bm{x}+\bm{d}}{\| \bm{x}+\bm{d} \|_2} - \bm{z} \right\|_2 = 2 - 2\frac{(\bm{x}+\bm{d})^\top\bm{z}}{\|\bm{x}+\bm{d}\|_2} \nonumber \\
=& 2 + 2(\bm{x}+\bm{d})^\top\bm{z} \left( 1 - \frac{1}{\|\bm{x}+\bm{d}\|_2} \right) - 2(\bm{x}+\bm{d})^\top\bm{z} \nonumber \\
\le& 2 + 2(\|\bm{x}+\bm{d}\|_2 - 1) - 2(\bm{x}+\bm{d})^\top\bm{z}  \nonumber \\
\le& \|\bm{x}\|_2^2 + \|\bm{z}\|_2^2 + \|\bm{d}\|_2^2 -  2(\bm{x}+\bm{d})^\top\bm{z} \label{eq:retr} \\
=& \|\bm{x}+\bm{d}-\bm{z}\|_2, \nonumber
\end{align}
where~\eqref{eq:retr} follows from the fact that $\|\bm{x}+\bm{d}\|_2-1 = \sqrt{1+\|\bm{d}\|_2^2} - 1 \le \tfrac{1}{2}\|\bm{d}\|_2^2$.
\end{proof}

\section{Proof of Proposition~\ref{prop:suff_decrease}} \label{app:suffdec}
Since $f$ is Lipschitz with constant $L$ and $\bm{d}^{k\top}\bm{x}^k = 0$, we have
\[ 
\left| f(\Proj_{\M}(\bm{x}^k + \alpha \bm{d}^k)) - f(\bm{x}^k + \alpha\bm{d}^k) \right| \le L \left\| \frac{\bm{x}^k+\alpha \bm{d}^k}{\| \bm{x}^k+\alpha \bm{d}^k \|_2} - (\bm{x}^k+\alpha \bm{d}^k) \right\|_2 \le \frac{\alpha^2L}{2} \|\bm{d}^k\|_2^2 
\]
by Proposition~\ref{prop:retr}. Hence, for any $\alpha \in (0,\bar{\alpha}]$, we have
\begin{subequations}
\begin{align}
& f(\Proj_{\M}(\bm{x}^k + \alpha \bm{d}^k)) \le  f(\bm{x}^k + \alpha\bm{d}^k) + \frac{\alpha^2L}{2} \|\bm{d}^k\|_2^2 \nonumber \\
\le& (1-\alpha) f(\bm{x}^k) + \alpha f(\bm{x}^k+\bm{d}^k) + \frac{\alpha^2L}{2}\|\bm{d}^k\|_2^2 \label{eq:cvx} \\
\le& f(\bm{x}^k) - \frac{\alpha}{t}\| \bm{d}^k \|_2^2 + \frac{\alpha^2L}{2} \|\bm{d}^k\|_2^2 \label{eq:subp-opt} \\
\le& f(\bm{x}^k) - \frac{\alpha}{2t}\| \bm{d}^k \|_2^2, \label{eq:step}
\end{align}
\end{subequations}
where~\eqref{eq:cvx} follows from the convexity of $f$, \eqref{eq:subp-opt} holds because the strong convexity of the objective function in subproblem~\eqref{ManPPA-dpcp-sub}, together with the optimality of $\bm{d}=\bm{d}^k$ and feasibility of $\bm{d}=\bm{0}$ for~\eqref{ManPPA-dpcp-sub}, implies that $f(\bm{x}^k+\bm{d}^k) + \tfrac{1}{t}\|\bm{d}^k\|_2^2 \le f(\bm{x}^k)$, and~\eqref{eq:step} is due to $\alpha \leq 1/(tL)$. If $t\leq 1/L$, then $\bar{\alpha}=1$. This completes the proof.

\section{Proof of Theorem~\ref{thm:local_rate}}\label{app:conv-anal-local}
We begin with two preparatory results. The first states that the restriction of the objective function $f$ in~\eqref{DPCP} on the nonconvex constraint set $\M$ satisfies a Riemannian subgradient inequality, which means that $f$ behaves almost like a convex function on $\M$.
\begin{proposition}\label{weakly-inequality}
Let $\bm{x}\in\M$ and $\bm{d}\in\R^n$ be such that $\bm{d}^\top\bm{x}=0$. Define $\bm{x}^+=\bm{x}+\bm{d}$. Then, for any $\bm{z}\in\M$ and $\bm{s} \in \partial f(\bm{x}^+)$, we have
\[
f(\bm{z}) - f(\bm{x}^+) \geq \inp{ (\bm{I}_n - \bm{x}\bm{x}^\top)\bm{s}}{\bm{z} - \bm{x}^+} - \frac{L}{2}\| \bm{z}-\bm{x} \|_2^2.
\]
\end{proposition}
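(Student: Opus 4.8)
The plan is to reduce the claimed Riemannian subgradient inequality to the ordinary Euclidean subgradient inequality for the convex function $f$, and then to absorb the discrepancy between the two into the quadratic error term using the geometry of the sphere. First I would invoke convexity of $f$ to write, for any $\bm{s}\in\partial f(\bm{x}^+)$,
\[ f(\bm{z}) - f(\bm{x}^+) \ge \inp{\bm{s}}{\bm{z}-\bm{x}^+}. \]
Writing $\bm{s} = (\bm{I}_n-\bm{x}\bm{x}^\top)\bm{s} + \bm{x}\bm{x}^\top\bm{s}$ and noting that the tangential part $(\bm{I}_n-\bm{x}\bm{x}^\top)\bm{s}$ is precisely the quantity appearing on the right-hand side of the proposition, it suffices to control the normal part, i.e. to show that
\[ \inp{\bm{x}\bm{x}^\top\bm{s}}{\bm{z}-\bm{x}^+} \ge -\tfrac{L}{2}\|\bm{z}-\bm{x}\|_2^2. \]

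The key computation is a spherical identity for the factor $\bm{x}^\top(\bm{z}-\bm{x}^+)$. Since $\bm{d}^\top\bm{x}=0$ and $\|\bm{x}\|_2=1$, we have $\bm{x}^\top\bm{x}^+ = \bm{x}^\top\bm{x}+\bm{x}^\top\bm{d} = 1$; and since both $\bm{z},\bm{x}\in\M$, $\|\bm{z}-\bm{x}\|_2^2 = 2-2\bm{x}^\top\bm{z}$, so that $\bm{x}^\top(\bm{z}-\bm{x}^+) = \bm{x}^\top\bm{z}-1 = -\tfrac{1}{2}\|\bm{z}-\bm{x}\|_2^2$. Substituting this into the rank-one inner product gives
\[ \inp{\bm{x}\bm{x}^\top\bm{s}}{\bm{z}-\bm{x}^+} = (\bm{s}^\top\bm{x})\bigl(\bm{x}^\top(\bm{z}-\bm{x}^+)\bigr) = -\tfrac{1}{2}(\bm{s}^\top\bm{x})\|\bm{z}-\bm{x}\|_2^2. \]

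Finally I would bound the subgradient. Because $f$ is $L$-Lipschitz, every $\bm{s}\in\partial f(\bm{x}^+)$ satisfies $\|\bm{s}\|_2\le L$, hence by Cauchy--Schwarz and $\|\bm{x}\|_2=1$ we get $|\bm{s}^\top\bm{x}|\le L$. Combining this with the identity above yields $\inp{\bm{x}\bm{x}^\top\bm{s}}{\bm{z}-\bm{x}^+}\ge -\tfrac{L}{2}\|\bm{z}-\bm{x}\|_2^2$, which together with the convexity bound completes the argument. There is no genuine obstacle here beyond recognizing the identity $\bm{x}^\top(\bm{z}-\bm{x}^+) = -\tfrac{1}{2}\|\bm{z}-\bm{x}\|_2^2$, which is exactly what converts the spurious normal component of the Euclidean subgradient into a term of the same quadratic order as the error allowance $\tfrac{L}{2}\|\bm{z}-\bm{x}\|_2^2$; the remaining steps are routine convexity and the standard Lipschitz bound on subgradient norms.
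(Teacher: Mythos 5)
Your proof is correct and follows essentially the same route as the paper's: both decompose the Euclidean subgradient $\bm{s}$ into its tangential part $(\bm{I}_n-\bm{x}\bm{x}^\top)\bm{s}$ and normal part $\bm{x}\bm{x}^\top\bm{s}$ after applying convexity, and both control the normal term via the spherical identity $\bm{x}^\top\bm{z}-1=-\tfrac{1}{2}\|\bm{z}-\bm{x}\|_2^2$ (using $\bm{x}^\top\bm{x}^+=1$) together with the Lipschitz bound $\|\bm{s}\|_2\le L$. No gaps; this matches the paper's argument step for step.
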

\begin{proof}
Since $f$ is convex on $\R^n$, we have
\[
 f(\bm{z}) - f(\bm{x}^+) \geq \inp{\bm{s}}{\bm{z} - \bm{x}^+} = \inp{(\bm{I}_n-\bm{x}\bm{x}^\top)\bm{s}}{\bm{z} - \bm{x}^+}  +  \inp{\bm{x}\bm{x}^\top\bm{s}}{\bm{z} - \bm{x}^+}
\]
Now, observe that
\begin{subequations}
\begin{align}
\inp{\bm{x}\bm{x}^\top\bm{s}}{\bm{z} - \bm{x}^+} &= \inp{\bm{s}}{\bm{x}\bm{x}^\top(\bm{z} - (\bm{x}+\bm{d}))} \nonumber \\
&= \inp{\bm{s}}{\bm{x}(\bm{x}^\top\bm{z} - 1)} \label{eq:sphere-prop1} \\
&\geq -\frac{1}{2} \|\bm{s}\|_2 \| \bm{z}-\bm{x} \|_2^2, \label{eq:sphere-prop2}
\end{align}
\end{subequations}
where~\eqref{eq:sphere-prop1} is due to $\bm{x}^\top\bm{x}=1$ and $\bm{d}^\top\bm{x}=0$, while~\eqref{eq:sphere-prop2} follows from the fact that $|\bm{x}^\top\bm{z}-1|=\tfrac{1}{2}\|\bm{z}-\bm{x}\|_2^2$. Since $f$ is Lipschitz with constant $L$, we have $\|\bm{s}\|_2 \le L$.
\end{proof}

The second establishes a key recursion for the iterates generated by ManPPA.
\begin{proposition}\label{prop:sphere linear rate recursion}
Let $\{\bm{x}^k\}_k$ be the sequence generated by Algorithm \ref{alg:manppa} with $t\leq 1/L$. Then, for any $\bar{\bm x}\in\M$, we have 
\[
\| \bm{x}^{k+1} - \bar{\bm x} \|_2^2 \leq (1+tL)\| \bm{x}^{k} - \bar{\bm x} \|_2^2 - 2t\left(  f(\bm{x}^k) - f(\bar{\bm x}) \right) + t^2 L^2.
\]
\end{proposition}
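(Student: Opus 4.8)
The plan is to unroll a single ManPPA step, reduce the desired recursion to an inequality about the \emph{unprojected} point $\bm{x}^k+\bm{d}^k$, and then control the resulting cross term using the first-order optimality of the subproblem together with Proposition~\ref{weakly-inequality}. Since $t\le 1/L$, Proposition~\ref{prop:suff_decrease} gives $\bar\alpha=1$, so $\bm{x}^{k+1}=\Proj_{\M}(\bm{x}^k+\bm{d}^k)$ with no line search. First I would invoke the nonexpansiveness of the projector (Proposition~\ref{polar-nonexpensive}) with $\bm{x}=\bm{x}^k$, $\bm{d}=\bm{d}^k$, and $\bm{z}=\bar{\bm x}$, then square and expand to obtain
\[
\|\bm{x}^{k+1}-\bar{\bm x}\|_2^2 \le \|\bm{x}^k+\bm{d}^k-\bar{\bm x}\|_2^2 = \|\bm{x}^k-\bar{\bm x}\|_2^2 + 2\inp{\bm{d}^k}{\bm{x}^k-\bar{\bm x}} + \|\bm{d}^k\|_2^2,
\]
which isolates the cross term $2\inp{\bm{d}^k}{\bm{x}^k-\bar{\bm x}}$ as the quantity that must be bounded.

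The heart of the argument is bounding this cross term. The first-order optimality condition of the subproblem~\eqref{ManPPA-dpcp-sub} furnishes a multiplier $\lambda$ and a subgradient $\bm{s}^k\in\partial f(\bm{x}^k+\bm{d}^k)$ with $\tfrac{1}{t}\bm{d}^k+\bm{s}^k=\lambda\bm{x}^k$; taking the inner product with $\bm{x}^k$ and using $\bm{d}^{k\top}\bm{x}^k=0$ and $\|\bm{x}^k\|_2=1$ fixes $\lambda=\bm{s}^{k\top}\bm{x}^k$ and yields the identity $(\bm{I}_n-\bm{x}^k\bm{x}^{k\top})\bm{s}^k=-\tfrac{1}{t}\bm{d}^k$. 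This is precisely the Riemannian subgradient appearing in Proposition~\ref{weakly-inequality}. Applying that proposition with $\bm{x}=\bm{x}^k$, $\bm{x}^+=\bm{x}^k+\bm{d}^k$, $\bm{z}=\bar{\bm x}$, $\bm{s}=\bm{s}^k$, substituting the identity, and multiplying through by the positive factor $2t$ would give, after rearrangement,
\[
2\inp{\bm{d}^k}{\bm{x}^k-\bar{\bm x}} \le 2t\bigl(f(\bar{\bm x})-f(\bm{x}^k+\bm{d}^k)\bigr) - 2\|\bm{d}^k\|_2^2 + tL\|\bm{x}^k-\bar{\bm x}\|_2^2.
\]
Recognizing that the optimality identity is exactly what makes Proposition~\ref{weakly-inequality} applicable is the key step and the main obstacle; without it there is no handle on $\inp{\bm{d}^k}{\bm{x}^k-\bar{\bm x}}$.

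It remains to pass from $f(\bm{x}^k+\bm{d}^k)$ back to $f(\bm{x}^k)$ and absorb the leftover $\bm{d}^k$ terms. Substituting the cross-term bound into the expansion collects the factor $(1+tL)\|\bm{x}^k-\bar{\bm x}\|_2^2$ and leaves the residual $2t\bigl(f(\bar{\bm x})-f(\bm{x}^k+\bm{d}^k)\bigr)-\|\bm{d}^k\|_2^2$. The Lipschitz continuity of $f$ gives $f(\bm{x}^k+\bm{d}^k)\ge f(\bm{x}^k)-L\|\bm{d}^k\|_2$, so this residual is at most $-2t\bigl(f(\bm{x}^k)-f(\bar{\bm x})\bigr)+2tL\|\bm{d}^k\|_2-\|\bm{d}^k\|_2^2$. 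Finally, completing the square through $2tL\,r-r^2\le t^2L^2$ with $r=\|\bm{d}^k\|_2$ eliminates the $\bm{d}^k$ dependence and produces the claimed recursion. Everything after the cross-term bound is routine algebra; the only care needed is tracking the sign when multiplying the subgradient inequality by $2t$ and verifying that the completed square is maximized at $r=tL$.
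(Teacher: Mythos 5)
Your proposal is correct and follows essentially the same route as the paper's proof: projection nonexpansiveness (Proposition~\ref{polar-nonexpensive}), the subproblem optimality identity $\bm{d}^k=-t(\bm{I}_n-\bm{x}^k\bm{x}^{k\top})\bm{s}^k$, the Riemannian subgradient inequality of Proposition~\ref{weakly-inequality} to handle the cross term, Lipschitz continuity to replace $f(\bm{x}^k+\bm{d}^k)$ by $f(\bm{x}^k)$, and the bound $2tL\|\bm{d}^k\|_2-\|\bm{d}^k\|_2^2\le t^2L^2$. The only differences are cosmetic (you expand the square around $\bm{x}^k-\bar{\bm x}$ rather than $\bm{x}^k+\bm{d}^k-\bar{\bm x}$, and you spell out the multiplier computation), so the two arguments coincide.
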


\begin{proof}
Since $t\leq 1/L$, we have $\bm{x}^{k+1}=\Proj_{\M}(\bm{x}^k+\bm{d}^k)$ by Proposition \ref{prop:suff_decrease}. From the optimality condition of the subproblem \eqref{ManPPA-dpcp-sub}, there exists an $\bm{s}^k \in \partial  f(\bm{x}^{k} + \bm{d}^{k})$ such that
\be\label{lem-B4-proof-1}
\bm{d}^{k} = - t (\bm{I}_n - \bm{x}^k\bm{x}^{k\top}) \bm{s}^k.
\ee
Denoting ${\bm{x}^{k}}^{+} = \bm{x}^{k} + \bm{d}^{k}$, we have 
\begin{subequations}
\begin{align}
	& \| \bm{x}^{k+1} - \bar{\bm{x}} \|_2^2 =  \left\| \Proj_{\M}(\bm{x}^k+\bm{d}^k) - \bar{\bm{x}} \right\|_2^2 \nonumber \\
	\leq& \left\| \bm{x}^{k} + \bm{d}^k - \bar{\bm{x}} \right\|_2^2 \label{eq:non-exp} \\
	=& \| \bm{x}^{k} - \bar{\bm{x}} \|_2^2 + 2\inp{\bm{d}^k}{{\bm{x}^k}^+ - \bar{\bm{x}}} - \|\bm{d}^k\|_2^2 \nonumber \\	
	= &\| \bm{x}^{k} - \bar{\bm{x}} \|_2^2 -2t \inp{(\bm{I}_n - \bm{x}^k\bm{x}^{k\top}) \bm{s}^k}{{\bm{x}^{k}}^+ - \bar{\bm x}} - \| \bm{d}^k \|_2^2 \label{eq:d-char} \\
	\leq & (1+t L)\| \bm{x}^{k} - \bar{\bm{x}} \|_2^2  + 2t(f(\bar{\bm x}) - f({\bm{x}^k}^+)) - \| \bm{d}^k \|_2^2 \label{eq:wkineq} \\
	\leq & (1+t L)\| \bm{x}^{k} - \bar{\bm{x}} \|_2^2  + 2t(f(\bar{\bm x}) - f(\bm{x}^k)) + 2t L \| \bm{d}^k \|_2 - \| \bm{d}^k \|_2^2  \label{eq:f-lip} \\
	\leq & (1+t L)\| \bm{x}^{k} - \bar{\bm{x}} \|_2^2 + 2t(f(\bar{\bm x}) - f(\bm{x}^k)) + t^2L^2,  \label{eq:recur}
\end{align}
\end{subequations}
where~\eqref{eq:non-exp} follows from Proposition~\ref{polar-nonexpensive}, \eqref{eq:d-char} follows from~\eqref{lem-B4-proof-1}, \eqref{eq:wkineq} follows from Proposition~\ref{weakly-inequality}, \eqref{eq:f-lip} follows from the Lipschitz continuity of $f$, and~\eqref{eq:recur} follows from the fact that  $2tL\|\bm{d}^k\|_2 - \| \bm{d}^k \|_2^2= -(\|\bm{d}^k\|_2-tL)^2+t^2L^2\leq t^2L^2$.
\end{proof}

We are now ready to prove Theorem \ref{thm:local_rate}. We first prove \eqref{the-B5-proof-1} by induction. Let $\bm{x}^* \in \mathcal{X}$ be such that $\dist(\bm{x}^k,\mathcal{X}) = \|\bm{x}^k-\bm{x}^*\|_2$. By invoking Proposition \ref{prop:sphere linear rate recursion} with $\bar{\bm x}=\bm{x}^*$, we have
\begin{align*}
& \dist^2(\bm{x}^{k+1},\mathcal{X}) \leq \| \bm{x}^{k+1} -  \bm{x}^* \|_2^2 \\
\leq & (1+t L)\| \bm{x}^{k} -  \bm{x}^* \|_2^2 - 2t \left(  f(\bm{x}^k) - f( \bm{x}^*) \right) + t^2 L^2 \\
\leq & (1+tL) \dist^2(\bm{x}^k,\mathcal{X}) - 2 \alpha t \cdot \dist(\bm{x}^{k},\mathcal{X}) + t^2 L^2,
\end{align*}
where the last inequality follows from \eqref{def:sharp_ineq}. Consider the function $[0,\overline{\delta}] \ni s\mapsto\phi(s) = (1+tL) s^2 - 2t\alpha s+t^2L^2$. Observe that $\phi$ attains its maximum at $s=\overline{\delta}$ if $\overline{\delta} \geq \tfrac{2t\alpha}{1+tL}$. Given that $t\leq  \min \left\{ \tfrac{\overline{\delta}}{2\alpha-L \overline{\delta}}, \tfrac{2\overline{\delta} \alpha -L\overline{\delta}^2 }{L^2} \right\}$, we indeed have $\overline{\delta} \geq \tfrac{2t\alpha}{1+tL}$ and hence $\phi(s)\leq \phi(\overline{\delta})\leq \overline{\delta}^2$ for all $s\in[0,\overline{\delta}]$. In particular, we have $\dist(\bm{x}^{k+1},\mathcal{X})\leq \overline{\delta}$ whenever $ \dist(\bm{x}^{k},\mathcal{X})\leq \overline{\delta}$. This establishes \eqref{the-B5-proof-1}.
	
Next, we prove \eqref{the-B5-proof-2}. Again, let $\bm{x}^* \in \mathcal{X}$ be such that $\dist(\bm{x}^k,\mathcal{X}) = \|\bm{x}^k-\bm{x}^*\|_2$. Since $\alpha\leq L$ by~\eqref{def:sharp_ineq}, we have $\bar{\delta}\leq 1$. This implies that $\bm{x}^{k\top}\bm{x}^* = \tfrac{2-\|\bm{x}^k-\bm{x}^*\|_2^2}{2} \ge \tfrac{1}{2}$. Hence, the vector $\bar{\bm{d}}^k = \tfrac{\bm{x}^*}{\bm{x}^{k\top} \bm{x}^*} - \bm{x}^k$ is well defined and satisfies $\bar{\bm d}^{k\top}\bm{x}^k=0$ (i.e., $\bar{\bm d}^k$ is a tangent vector at $\bm{x}^k$), $\Proj_{\M}(\bm{x}^k+\bar{\bm d}^k) = \bm{x}^*$, and $\|\bar{\bm d}^k\|_2 \le \sqrt{3}$. By the strong convexity of the objective function in subproblem \eqref{ManPPA-dpcp-sub} and noting the optimality of $\bm{d}^k$ and feasibility of $\bar{\bm d}^k$ for~\eqref{ManPPA-dpcp-sub}, we have
\begin{equation} \label{ineq_them_quad_1}
f(\bm{x}^k + \bm{d}^k) + \frac{1}{2t}\| \bm{d}^k \|_2^2 + \frac{1}{2t}\| \bm{d}^k - \bar{\bm d}^k \|_2^2 \leq f(\bm{x}^k + \bar{\bm d}^k) + \frac{1}{2t} \| \bar{\bm d}^k \|_2^2. 
\end{equation}
Furthermore, by the Lipschitz continuity of $f$ and Proposition~\ref{prop:retr}, we get
\begin{subequations}
\begin{align}
f(\bm{x}^k + \bar{\bm d}^k) &\leq f(\Proj_{\M}(\bm{x}^k + \bar{\bm d}^k)) + \frac{L}{2}\normtwo{\bar{\bm d}^k}^2_2 = f(\bm{x}^*)+ \frac{L}{2}\normtwo{\bar{\bm d}^k}^2_2, \label{ineq_them_quad_2} \\
f(\bm{x}^{k+1}) &= f(\Proj_{\M}(\bm{x}^k + \bm{d}^k)) \leq f(\bm{x}^k+\bm{d}^k) + \frac{L}{2}\normtwo{\bm{d}^k}^2_2. \label{ineq_them_quad_3}  
\end{align}
\end{subequations}
Combining \eqref{ineq_them_quad_1}, \eqref{ineq_them_quad_2}, and \eqref{ineq_them_quad_3}, we have
\begin{equation} \label{e1}
f(\bm{x}^{k+1}) +\left(\frac{1}{2t}-\frac{L}{2}\right)\| \bm{d}^k \|_2^2 + \frac{1}{2t} \| \bm{d}^k - \bar{\bm d}^k \|_2^2 \le f(\bm{x}^*) + \left(\frac{L}{2} + \frac{1}{2t} \right) \normtwo{\bar{\bm d}^k}^2_2. 
\end{equation}
Since $t\leq \tfrac{\overline{\delta}}{2\alpha - L\overline{\delta}}\leq 1/L$, we have $\tfrac{1}{2t} - \tfrac{L}{2}\geq 0$. Moreover, since $\dist(\bm{x}^{k+1},\mathcal{X}) \leq \overline\delta \le \delta$, we have $f(\bm{x}^{k+1})-f(\bm{x}^*) \geq \alpha \cdot \dist(\bm{x}^{k+1},\mathcal{X})$ by~\eqref{def:sharp_ineq}. It then follows from \eqref{e1} and Proposition~\ref{prop:retr} that
\[
\alpha \cdot \dist(\bm{x}^{k+1},\mathcal{X}) \leq \left(\frac{L}{2}+ \frac{1}{2t}\right)\| \bar{\bm d}^k \|_2^2 \le 8\left(\frac{L}{2}+ \frac{1}{2t}\right) \|\bm{x}^k-\bm{x}^*\|^2_2 = 4\left(L+\frac{1}{t}\right)\dist^2(\bm{x}^k,\mathcal{X}).
\]
This completes the proof.

\section{Convergence Results for Inexact ALM and SSN} \label{app:ALM-SSN}

The convergence behavior of the inexact ALM (Algorithm~\ref{alg:augmented_vector}) solving problem~\eqref{ManPPA-dpcp-sub-rewrite} and the SSN method (Algorithm \ref{alg:SSN_vector}) for solving the nonsmooth equation~\eqref{nonsmooth-equation} can be deduced from existing results in the literature. We begin with the convergence result for the inexact ALM.
\begin{proposition} \label{convergece:alm}
Let $\{(\bm{d}^{j},\bm{u}^{j},y^{j},\bm{z}^{j})\}_j$ be the sequence generated by Algorithm \ref{alg:augmented_vector} with stopping criterion \eqref{augmented_subproblem_inexact_cond1}. Then, the sequence $\{(\bm{d}^{j},\bm{u}^j)\}_j$ is bounded and converges to the unique optimal solution to problem \eqref{ManPPA-dpcp-sub-rewrite}. Moreover, if $0 < \sigma_j \nearrow \sigma_{\infty} = \infty$ and the stopping criteria \eqref{augmented_subproblem_inexact_cond2} and \eqref{augmented_subproblem_inexact_cond3} are also used, then for all sufficiently large $j$, the sequence $\{(\bm{d}^j,\bm{u}^j,y^j,\bm{z}^j)\}_j$ converges asymptotically superlinearly to the set of KKT points of~\eqref{ManPPA-dpcp-sub-rewrite}.
\end{proposition}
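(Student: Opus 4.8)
The plan is to recognize Algorithm~\ref{alg:augmented_vector} as an instance of the inexact proximal point algorithm applied to the dual of problem~\eqref{ManPPA-dpcp-sub-rewrite}, and then to invoke the classical convergence theory of Rockafellar~\cite{rockafellar1976augmented} together with the regularity analysis of Sun\etal\cite{Sun-lasso-2018}. Writing the concave dual functional of~\eqref{ManPPA-dpcp-sub-rewrite} as $g$ and setting $\mathcal{T}=\partial(-g)$ (a maximal monotone operator on $\R\times\R^p$), the method of multipliers produces the dual iterates by an inexact resolvent step $(y^{j+1},\bm{z}^{j+1}) \approx (\mathcal{I}+\sigma_j\mathcal{T})^{-1}(y^j,\bm{z}^j)$, where the inexactness is precisely governed by the stopping criteria~\eqref{augmented_subproblem_inexact_cond1}--\eqref{augmented_subproblem_inexact_cond3}.

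First I would settle the primal well-posedness. The objective $\frac{1}{2}\|\bm{d}\|_2^2+t\|\bm{u}\|_1$ of~\eqref{ManPPA-dpcp-sub-rewrite} is closed, proper, and convex; after eliminating $\bm{u}$ via the affine relation $\bm{u}=\bm{Y}^\top\bm{d}+\bm{c}$, the reduced problem is strongly convex in $\bm{d}$. Since the feasible set is a nonempty affine subspace (for instance $(\bm{d},\bm{u})=(\bm{0},\bm{c})$ is feasible), a minimizer exists and the optimal pair $(\bm{d},\bm{u})$ is unique. Because all constraints are affine, strong duality for convex programs holds without any further qualification, so the KKT system of~\eqref{ManPPA-dpcp-sub-rewrite} has a solution and its KKT set is nonempty. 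The global-convergence half of the proposition then follows from Rockafellar's criterion~(A), which is exactly~\eqref{augmented_subproblem_inexact_cond1} (namely $\sum_j\varepsilon_j<\infty$): the dual sequence $\{(y^j,\bm{z}^j)\}_j$ remains bounded and converges to a dual optimum, and by the strong convexity in $\bm{d}$ the primal sequence $\{(\bm{d}^j,\bm{u}^j)\}_j$ is bounded and converges to the unique primal optimum.

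For the superlinear half, Rockafellar's rate theorem yields an asymptotically superlinear rate provided that (i) $\sigma_j\nearrow\infty$, (ii) the tighter inexactness criteria~\eqref{augmented_subproblem_inexact_cond2}--\eqref{augmented_subproblem_inexact_cond3} (his criteria (B) and (B$'$)) are enforced, and (iii) the inverse operator $\mathcal{T}^{-1}$ is Lipschitz continuous (equivalently, $\mathcal{T}$ is metrically subregular) at the origin for the limiting dual solution. Conditions (i) and (ii) hold by hypothesis, so the crux is (iii). Here I would exploit the piecewise linear-quadratic (PLQ) structure of~\eqref{ManPPA-dpcp-sub-rewrite}: the quadratic $\frac{1}{2}\|\bm{d}\|_2^2$, the polyhedral $t\|\bm{u}\|_1$, and the affine constraints together make $-g$ a PLQ convex function, so that $\mathcal{T}=\partial(-g)$ is a polyhedral multifunction. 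By Robinson's theorem, the inverse of a polyhedral multifunction is locally upper Lipschitzian, which is exactly the calmness condition in (iii). This is the same mechanism used for LASSO-type problems in~\cite{Sun-lasso-2018}, and our subproblem fits that template after the substitution $\bm{c}=\bm{Y}^\top\bm{x}$. Feeding (i)--(iii) into Rockafellar's theorem then gives the asserted asymptotic superlinear convergence of $\{(\bm{d}^j,\bm{u}^j,y^j,\bm{z}^j)\}_j$ to the KKT set.

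I expect the main obstacle to be step (iii). Although the PLQ structure makes the polyhedral-multifunction argument conceptually routine, one must set up the dual functional $g$ explicitly and confirm that the monotone operator implicitly iterated by Algorithm~\ref{alg:augmented_vector} coincides with $\partial(-g)$, so that the dual updates are genuine resolvent steps, before the Lipschitz property of $\mathcal{T}^{-1}$ can be converted into the superlinear rate. Since this is precisely the setting treated in~\cite{rockafellar1976augmented,Sun-lasso-2018}, the remaining work amounts to verifying that our subproblem matches their hypotheses rather than developing new analysis.
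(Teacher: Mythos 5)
Your proposal follows essentially the same route as the paper: both view Algorithm~\ref{alg:augmented_vector} as an inexact proximal point method on the dual of~\eqref{ManPPA-dpcp-sub-rewrite}, both obtain the first (global convergence) part from Rockafellar's criterion~(A) in~\cite{rockafellar1976augmented} after noting existence/uniqueness of the primal solution and solvability of the dual, and both obtain the superlinear rate by combining criteria (B)/(B$'$) with a calmness property that is extracted from polyhedrality, exactly as in the LASSO analysis of~\cite{Sun-lasso-2018}. The paper makes the dual explicit, namely
\[
\max_{y\in\R,\ \bm{z}\in\R^p}\ -\left( \frac{1}{2}\| \bm{Y}\bm{z} + y\bm{x} \|_2^2 + \bm{c}^\top\bm{z} +\mathbb{I}_{\{\|\cdot\|_\infty \le t\}}(-\bm{z}) \right),
\]
using that $\frac{1}{2}\|\cdot\|_2^2$ is self-conjugate and that $t\|\cdot\|_1$ is the conjugate of the indicator of the $\ell_\infty$-ball of radius $t$; your PLQ argument reaches the same structural conclusion (polyhedrality of $\partial(-g)$) without writing $g$ down, which is fine.

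There is, however, one concrete gap in your step~(iii). Lipschitz continuity of $\mathcal{T}^{-1}=(\partial(-g))^{-1}$ at the origin is, in Rockafellar's rate theory, precisely what gives asymptotic superlinear convergence of the \emph{dual} iterates $\{(y^j,\bm{z}^j)\}_j$. The proposition claims superlinear convergence of the full primal--dual sequence $\{(\bm{d}^j,\bm{u}^j,y^j,\bm{z}^j)\}_j$ to the set of KKT points, and for that one needs an additional error-bound (calmness) condition on the \emph{KKT mapping} of the primal--dual pair, not merely on the dual subdifferential. This is why the paper explicitly defines the multifunction $\Gamma(\bm{d},\bm{u};y,\bm{z})$ encoding the KKT system, verifies (using the explicit description of $\partial\|\cdot\|_1$) that $\Gamma$ is itself a polyhedral multifunction, and only then invokes \cite[Proposition 2]{Sun-lasso-2018}, \cite[Fact 2]{ZS17}, and the argument of \cite[Theorem 3.3]{Sun-lasso-2018}, which require error bounds for both operators. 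The repair is routine—apply Robinson's upper-Lipschitz theorem to $\Gamma$ exactly as you applied it to $\partial(-g)$—but as written your condition~(iii) does not by itself support the stated conclusion about the primal--dual sequence.
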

\begin{proof}
Observe that the function $\bm{d}\mapsto \ell(\bm{d}) := \tfrac{1}{2}\|\bm{d}\|_2^2$ is strongly convex, self-conjugate (i.e., $\ell^*(\bm{d})=\ell(\bm{d})$), and has a Lipschitz continuous gradient. Moreover, the conjugate of the indicator function
\[ 
\bm{u} \mapsto \mathbb{I}_{\{ \|\cdot\|_{\infty} \le t \}}(\bm{u}) = \left\{
\begin{array}{c@{\quad}l}
0 & \mbox{if } \|\bm{u}\|_{\infty} \le t, \\
\noalign{\smallskip}
+\infty & \mbox{otherwise}
\end{array}
\right.
\]
is $\bm{u} \mapsto h(\bm{u}) = t\|\bm{u}\|_1$. Hence, we may write the dual of problem~\eqref{ManPPA-dpcp-sub-rewrite} as
\be\label{ManPPA-dpcp-sub-dual}
\max_{y\in\R,\ \bm{z}\in\R^p} \ -\left( \frac{1}{2}\| \bm{Y}\bm{z} + y\bm{x} \|_2^2 + \bm{c}^\top\bm{z} +\mathbb{I}_{\{\|\cdot\|_\infty \le t\}}(-\bm{z}) \right).
\ee
It is easy to verify that problem~\eqref{ManPPA-dpcp-sub-rewrite} has a unique optimal solution, and that problem~\eqref{ManPPA-dpcp-sub-dual} satisfies the Slater condition and its optimal solution set is also nonempty. It follows from~\cite[Theorem 4]{rockafellar1976augmented} that the first part of Proposition~\ref{convergece:alm} holds.

Next, let $g:\R\times\R^p\rightarrow\R$ denote the objective function in problem~\eqref{ManPPA-dpcp-sub-dual}; i.e.,
\[ g(y,\bm{z}) :=  \frac{1}{2}\| \bm{Y}\bm{z} + y\bm{x} \|_2^2 + \bm{c}^\top\bm{z} +\mathbb{I}_{\{\|\cdot\|_\infty \le t\}}(-\bm{z}). \]
Furthermore, define the mapping $\Gamma:\R^n\times\R^p\times\R\times\R^p\rightrightarrows\R^n\times\R^p\times\R^p\times\R$ associated with the primal-dual pair~\eqref{ManPPA-dpcp-sub-rewrite} and~\eqref{ManPPA-dpcp-sub-dual} by
\[ 
\Gamma(\bm{d},\bm{u};y,\bm{z}) := \left\{ (\bm{\pi}_1,\bm{\pi}_2,\bm{\pi}_3,\pi_4) \,\left|\,
\begin{array}{rcl}
 \bm{\pi}_1 &=& \bm{d} - y\bm{x} - \bm{Y}\bm{z}, \\
 \noalign{\smallskip}
 \bm{\pi}_2 &\in& t\partial\|\bm{u}\|_1 + \bm{z}, \\
 \noalign{\smallskip}
  -\bm{\pi}_3 &=& \bm{Y}^\top\bm{d} - \bm{u} + \bm{c}, \\
 \noalign{\smallskip}
 -\pi_4 &=& \bm{d}^\top\bm{x}.
\end{array}
\right. \right\}
\]
 It is easy to verify that if $(\bm{0},\bm{0},\bm{0},0) \in \Gamma(\bar{\bm d},\bar{\bm u};\bar{y},\bar{\bm z})$, then $(\bar{\bm d},\bar{\bm u})$ is optimal for problem~\eqref{ManPPA-dpcp-sub-rewrite} and $(\bar{y},\bar{\bm z})$ is optimal for problem~\eqref{ManPPA-dpcp-sub-dual}.

Now, it is well known (see, e.g., \cite[Section 4.2]{ZS17} and the references therein) that $(y,\bm{z}) \mapsto \partial g(y,\bm{z})$ is a polyhedral multifunction; i.e., its graph 
\[ {\rm gph}(\partial g) := \left\{ (y,\bm{z};s,\bm{t}) \in \R\times\R^p\times \R\times\R^p \mid (s,\bm{t}) \in \partial g(y,\bm{z}) \right\} \]
is the union of a finite collection of polyhedral convex sets. Moreover, using the fact that $\bm{s}\in\partial\|\bm{u}\|_1$ if and only if 
\[
s_i \in \left\{
\begin{array}{c@{\quad}l}
\{1\} & \mbox{if } u_i > 0, \\
\noalign{\smallskip}
[-1,1] & \mbox{if } u_i = 0, \\
\noalign{\smallskip}
\{-1\} & \mbox{if } u_i < 0,
\end{array}
\right.
\]
it can be verified that the KKT mapping $\Gamma$ is also a polyhedral multifunction. Hence, by invoking~\cite[Proposition 2]{Sun-lasso-2018},~\cite[Fact 2]{ZS17} and following the arguments in the proof of~\cite[Theorem 3.3]{Sun-lasso-2018}, we conclude that the second part of Proposition~\ref{convergece:alm} holds.
\end{proof}


Next, we have the following convergence result for the SSN method.
\begin{proposition} \label{Thr2}
The sequence $\{\bm{d}^j\}_j$ generated by Algorithm \ref{alg:SSN_vector} converges superlinearly to the unique optimal solution $\bar{\bm d}$ to the nonsmooth equation~\eqref{nonsmooth-equation}.
\end{proposition}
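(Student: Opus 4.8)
The plan is to verify that the nonsmooth equation $\nabla\psi(\bm{d})=\bm{0}$ together with the iteration in Algorithm~\ref{alg:SSN_vector} fits the standard (inexact) semismooth Newton convergence framework, so that the conclusion follows by checking its three structural hypotheses: nonsingularity of the generalized Jacobian, strong semismoothness of $\nabla\psi$, and global convergence of the line-search globalized iteration. All of these pieces are available in~\cite{Sun-lasso-2018} and the classical semismooth Newton literature, so the proof reduces to confirming that problem~\eqref{nonsmooth-equation} meets their assumptions.

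First I would record the two facts already established in the text. Since $\psi$ is strongly convex and continuously differentiable, it is coercive and has a unique minimizer $\bar{\bm d}$, which is the unique solution of $\nabla\psi(\bm{d})=\bm{0}$; moreover every element $\bm{V}=\bm{I}_n+\sigma\bm{Y}(\bm{I}_p-\bm{Q})\bm{Y}^\top+\sigma\bm{x}\bm{x}^\top$ of $\partial(\nabla\psi)$ is positive definite because $\bm{I}_p-\bm{Q}\succeq\bm{0}$. In particular each $\bm{V}^j$ is nonsingular, so the direction $\bm{v}^j$ computed from the inexact solve of $\bm{V}^j\bm{v}=-\nabla\psi(\bm{d}^j)$ is well defined; with the forcing term $\|\bm{V}^j\bm{v}^j+\nabla\psi(\bm{d}^j)\|_2\le\|\nabla\psi(\bm{d}^j)\|_2^{1+\tau}$ and positive definiteness of $\bm{V}^j$, the vector $\bm{v}^j$ is a descent direction, i.e. $\inp{\nabla\psi(\bm{d}^j)}{\bm{v}^j}<0$ whenever $\nabla\psi(\bm{d}^j)\neq\bm{0}$. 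Hence the Armijo line search is well defined, and by the standard sufficient-decrease argument combined with coercivity of $\psi$, the whole sequence $\{\bm{d}^j\}$ converges to $\bar{\bm d}$.

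Second I would establish that $\nabla\psi$ is strongly semismooth. The only nonsmooth ingredient in $\nabla\psi(\bm{d})=\bm{d}+\sigma(\bm{d}^\top\bm{x}+y/\sigma)\bm{x}+\bm{Y}\,\prox_{\sigma h^*}(\sigma\bm{w})$, with $\bm{w}=\bm{Y}^\top\bm{d}+\bm{c}+\bm{z}/\sigma$, is the map $\bm{d}\mapsto\prox_{\sigma h^*}(\sigma\bm{w})$. Because $h(\bm{u})=t\|\bm{u}\|_1$ has conjugate $h^*=\mathbb{I}_{\{\|\cdot\|_\infty\le t\}}$, the mapping $\prox_{\sigma h^*}$ is exactly the Euclidean projection onto the box $\{\bm{u}:\|\bm{u}\|_\infty\le t\}$, which is piecewise affine and therefore strongly semismooth. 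Composing with the affine map $\bm{d}\mapsto\sigma\bm{w}$ and left-multiplying by $\bm{Y}$ preserves strong semismoothness, so $\nabla\psi$ is strongly semismooth. With strong semismoothness of $\nabla\psi$ and nonsingularity of every $\bm{V}\in\partial(\nabla\psi)(\bar{\bm d})$, the local convergence theorem for inexact semismooth Newton methods guarantees that once $\bm{d}^j$ enters a sufficiently small neighborhood of $\bar{\bm d}$, the unit step $\rho_j=1$ is accepted and the iteration reduces to the pure (inexact) semismooth Newton step, which converges superlinearly.

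The main obstacle I anticipate is the transition argument: showing that the globalized iteration \emph{eventually} accepts the full step $\rho_j=1$, so that the asymptotic superlinear rate is actually attained rather than spoiled by the backtracking. This rests on the standard but delicate estimate that the Armijo test $\psi(\bm{d}^j+\bm{v}^j)\le\psi(\bm{d}^j)+\mu\inp{\nabla\psi(\bm{d}^j)}{\bm{v}^j}$ holds near $\bar{\bm d}$ for $\mu\in(0,1/2)$, which follows by combining the superlinear approximation property of the semismooth Newton direction with a second-order expansion of $\psi$ and the choice $\mu<1/2$. Once this is in place, concatenating the global convergence from the first step with the eventual unit-step local rate yields superlinear convergence of $\{\bm{d}^j\}$ to $\bar{\bm d}$, completing the proof.
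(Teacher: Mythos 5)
Your proposal is correct and takes essentially the same route as the paper, whose proof simply invokes the strong semismoothness of $\prox_{h/\sigma}$ together with the arguments of \cite[Theorem 3.6]{Sun-lasso-2018}; the ingredients you verify (positive definite, hence nonsingular, generalized Jacobians, global convergence of the Armijo-globalized iteration, and the eventual acceptance of unit steps giving the superlinear rate) are exactly what that cited framework supplies. The only cosmetic difference is that you establish strong semismoothness via $\prox_{\sigma h^*}$, the projection onto the box $\{\bm{u}:\|\bm{u}\|_\infty\le t\}$, which by the Moreau decomposition is equivalent to the paper's statement that the piecewise-affine soft-thresholding map $\prox_{h/\sigma}$ is strongly semismooth.
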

\begin{proof}
This follows from the fact that $\prox_{h/\sigma}$ is strongly semismooth (see, e.g.,~\cite[Definition 3.5]{Sun-lasso-2018} for the definition) and the arguments in the proof of~\cite[Theorem 3.6]{Sun-lasso-2018}.
\end{proof}

\section{Proof of Theorem \ref{prop:stochastic subgradient global rate}} \label{app:stmanppa}
Let $\hat{\bm x}^k=\mprox_{\lambda f}(\bm{x}^k) \in \M$. By definition of $e_\lambda$ in~\eqref{eq:moreau} and Proposition~\ref{polar-nonexpensive}, we have
\begin{equation}
 e_\lambda(\bm{x}^{k+1}) \le f(\hat{\bm x}^k) + \frac{1}{2\lambda} \| \hat{\bm x}^k - \bm{x}^{k+1} \|_2^2 \le f(\hat{\bm x}^k) + \frac{1}{2\lambda} \| \hat{\bm x}^k - (\bm{x}^k+\bm{d}^k) \|_2^2. \label{prop_sto_recursion_1}
\end{equation}
From the optimality condition of \eqref{subproblem_sppa}, we get
\[
\bm{d}^k \in -t_k (\bm{I}_n - \bm{x}^k\bm{x}^{k\top}) \partial f_{j_k}(\bm{x}^k + \bm{d}^k).
\]
Hence, we compute
\begin{subequations} \label{prop_sto_recursion_2}
\begin{align}
& \| \hat{\bm x}^k - (\bm{x}^{k} + \bm{d}^k) \|_2^2 \nonumber \\
=& \| \hat{\bm x}^k  - \bm{x}^{k} \|_2^2 - \|\bm{d}^k\|_2^2 - 2\inp{\hat{\bm x}^k  - \bm{x}^{k} - \bm{d}^k}{ \bm{d}^k} \nonumber \\
\le&  \| \hat{\bm x}^k  - \bm{x}^{k} \|_2^2 - \|\bm{d}^k\|_2^2 + 2t_k\left( f_{j_k}( \hat{\bm x}^k ) -f_{j_k}(\bm{x}^{k}+\bm{d}^k) + \frac{L_{j_k}}{2} \| \hat{\bm x}^k - \bm{x}^{k} \|_2^2 \right) \label{eq:st-1} \\
\le& \| \hat{\bm x}^k  - \bm{x}^{k} \|_2^2 - \|\bm{d}^k\|_2^2 + 2t_k\left( f_{j_k}( \hat{\bm x}^k ) -f_{j_k}(\bm{x}^{k}) \right) + 2t_kL_{j_k} \left( \frac{1}{2} \| \hat{\bm x}^k - \bm{x}^{k} \|_2^2 + \|\bm{d}^k\|_2 \right), \label{eq:st-2}
\end{align}
\end{subequations}
where~\eqref{eq:st-1} follows from Proposition \ref{weakly-inequality} and~\eqref{eq:st-2} is due to the Lipschitz continuity of $f_{j_k}$. Upon taking the expectation on both sides of \eqref{prop_sto_recursion_2} with respect to $j_k$ conditioned on $\bm{x}^k$,  we obtain
\begin{subequations}
\begin{align}
& \mathbb{E}\left[ \| \hat{\bm x}^k - (\bm{x}^{k} + \bm{d}^k) \|_2^2 \mid \bm{x}^k \right] \nonumber \\
\le& (1+t_k\bar{L})\| \hat{\bm x}^k - \bm{x}^{k} \|_2^2 + \frac{2t_k}{p} \left( f( \hat{\bm x}^k ) -f(\bm{x}^{k}) \right) + 2t_k\bar{L} \cdot \mathbb{E}\left[ \| \bm{d}^k \|_2 \mid \bm{x}^k \right] - \mathbb{E} \left[ \| \bm{d}^k \|_2^2 \mid \bm{x}^k \right]  \nonumber \\
\le& (1+t_k\bar{L})\| \hat{\bm x}^k - \bm{x}^{k} \|_2^2 + \frac{2t_k}{p} \left( f( \hat{\bm x}^k ) -f(\bm{x}^{k}) \right) + t_k^2\bar{L}^2 \label{eq:d-bd1} \\
=& \left( 1+t_k\bar{L}-\frac{t_k}{p\lambda} \right) \| \hat{\bm x}^k - \bm{x}^{k} \|_2^2 + \frac{2t_k}{p}(e_\lambda(\bm{x}^k) - f(\bm{x}^k)) + t_k^2\bar{L}^2 \nonumber \\
\le& \left( 1+t_k\bar{L}-\frac{t_k}{p\lambda} \right) \| \hat{\bm x}^k - \bm{x}^{k} \|_2^2 + t_k^2\bar{L}^2, \label{eq:d-bd2}
\end{align}
\end{subequations}
where~\eqref{eq:d-bd1} follows from the fact that $ \mathbb{E}\left[ \| \bm{d}^k \|_2 \mid \bm{x}^k \right] \le \sqrt{ \mathbb{E}\left[ \| \bm{d}^k \|_2^2 \mid \bm{x}^k \right]}$ and $a\sqrt{x}-x \le a^2/4$ for any $a,x\ge0$;~\eqref{eq:d-bd2} follows from the definition of $e_\lambda$. Putting~\eqref{prop_sto_recursion_1} and~\eqref{eq:d-bd2} together gives
\begin{align*}
 e_{\lambda}(\bm{x}^{k+1}) &\le f(\hat{\bm x}^k) + \frac{1}{2\lambda}\left( 1+t_k\bar{L}-\frac{t_k}{p\lambda} \right) \| \hat{\bm x}^k - \bm{x}^{k} \|_2^2 + \frac{t_k^2\bar{L}^2}{2\lambda} \\
&= e_{\lambda}(\bm{x}^k) + \frac{(\bar{L}-1/(p\lambda))t_k}{2\lambda}\| \hat{\bm x}^k - \bm{x}^{k} \|_2^2 + \frac{t_k^2\bar{L}^2}{2\lambda}.
\end{align*}
Taking expectation on both sides with respect to $\bm{x}^k$ yields
\[
\frac{(1/(p\lambda)-\bar{L})t_k}{2\lambda}\mathbb{E} \left[ \| \hat{\bm x}^k - \bm{x}^{k} \|_2^2 \right] \le \mathbb{E} \left[ e_{\lambda}(\bm{x}^k) \right] - \mathbb{E} \left[ e_\lambda(\bm{x}^{k+1}) \right] + \frac{t_k^2\bar{L}^2}{2\lambda}.
\]
Upon summing the above inequality over $k=0,1,\ldots T$ and noting that $\lambda <1/(p\bar{L})$ and $e_\lambda(\bm{z})\ge0$ for any $\bm{z}\in\R^n$, we obtain
\[
\sum_{k=0}^Tt_k\mathbb{E} \left[ \frac{1}{\lambda^2}\| \bm{x}^k - \mprox_{\lambda f}(\bm{x}^k) \|_2^2 \right] \leq \frac{2}{1/p-\lambda\bar{L}}e_{\lambda}(\bm{x}^0) + \frac{\bar{L}^2}{\lambda(1/p - \lambda\bar{L})} \sum_{k=0}^T t_k^2.
\]
Upon dividing both sides of the above inequality by $\sum_{k=0}^T t_k$ and noting that the left-hand side becomes $\mathbb{E} \left[ \Theta_{\lambda}(\bar{\bm x})^2 \right]$, the proof is complete.

\end{document}